\newtheorem{theoremcounter}{Theorem Counter}[section]
\theoremstyle{definition}
\newtheorem{defi}[theoremcounter]{Definition}
\newtheorem{rem}[theoremcounter]{Remark}
\newtheorem{ex}[theoremcounter]{Example}
\theoremstyle{plain}
\newtheorem{lem}[theoremcounter]{Lemma}
\newtheorem{prop}[theoremcounter]{Proposition}
\newtheorem{cor}[theoremcounter]{Corollary}
\newtheorem{conj}[theoremcounter]{Conjecture}
\newtheorem{thm}[theoremcounter]{Theorem}
\numberwithin{equation}{section}\numberwithin{figure}{section}
\newcommand{\bbH}{\mathbb{H}}
\DeclareMathOperator{\ImNew}{Im}
\renewcommand{\Im}{\ImNew}
\DeclareMathOperator{\ReNew}{Re}
\renewcommand{\Re}{\ReNew}
\DeclareMathOperator{\SL}{SL}
\DeclareMathOperator{\sgn}{sgn}
\DeclareMathOperator{\erf}{erf}
\def\st#1#2{\genfrac{[}{]}{0pt}{}{#1}{#2}}
\def\sm{{M (p_1/q_1,\dots,p_n/q_n)}}
\theoremstyle{thmstyleone}%
\theoremstyle{thmstyletwo}%
\theoremstyle{thmstylethree}%
\begin{document}

\title[Modular transformation formulas of homological blocks]{Modular transformations of homological blocks for Seifert fibered homology $3$-spheres}

\author{Toshiki Matsusaka}
\address{Faculty of Mathematics, Kyushu University, Motooka 744, Nishi-ku, Fukuoka 819-0395, Japan}
\email{matsusaka@math.kyushu-u.ac.jp}

\author{Yuji Terashima}
\address{Graduate school of science, Tohoku University, 6-3, Aoba, Aramaki-aza, Aoba-ku, Sendai 980-8578, Japan}
\email{yujiterashima@tohoku.ac.jp}

\subjclass[2020]{Primary 57K31 57K16; Secondary 11F37}

\maketitle

\begin{abstract}
In this article, for any Seifert fibered integral homology 3-sphere, we give explicit modular transformation formulas of homological blocks introduced by Gukov-Pei-Putrov-Vafa. Moreover, based on the modular transformation formulas, we have explicit asymptotic expansion formulas for the Witten-Reshetikhin-Turaev invariants which give a new proof of a version by Andersen of the Witten asymptotic conjecture.
\end{abstract}


\section{Introduction}

Recently, Gukov-Pei-Putrov-Vafa~\cite{GPPV2020} introduced important $q$-series called homological blocks for any plumbed $3$-manifolds associated with negative definite plumbing tree graphs based on Gukov-Putrov-Vafa \cite{GPV}.
A physical viewpoint strongly suggests that the homological blocks have several interesting properties \cite{CCFGH2019, ChengFerrariSgroi2020, Chun2017, Chung2020, EGGKPS2020,GHNPPS2021, GukovManolescu2021, BringmannMahlburgMilas2019, BringmannMahlburgMilas2020, BKMN2021-pre, BKMN2021-RMS, MoriMurakami2021}. In particular, it is expected that the homological blocks have good modular transformation properties and their special limits at root of unity 
are identified with the Witten-Reshetikhin-Turaev (WRT) invariants. These are interesting mathematical conjectures. For Seifert fibered integral homology $3$-spheres, the identification of their special limits with the WRT invariants is shown  in~\cite{FIMT2021} and~\cite{AndersenMistegard2018}. In fact, for
Seifert fibered integral homology $3$-spheres, Fuji, Iwaki, Murakami, and the second author~\cite{FIMT2021} introduced a $q$-series
called the WRT function which is identified with a homological block by Andersen-Misteg{\aa}rd~\cite{AndersenMistegard2018} 
and proved that WRT invariants for Seifert fibered integral homology spheres are radial limits at root of unity of the WRT
functions. Independently in a different way, Andersen-Misteg{\aa}rd~\cite{AndersenMistegard2018} proved the same radial limit formula for the homological blocks.

In this article, we get explicit modular transformation formulas of homological blocks for any Seifert fibered integral homology $3$-sphere (\cref{thm:S-trans-WRT}). 
Moreover, combined with the results in~\cite{FIMT2021} and~\cite{AndersenMistegard2018}, we have explicit asymptotic expansion formulas for the WRT invariants which give a new proof of a version by Andersen \cite{Andersen2013} of the Witten asymptotic conjecture \cite{Witten1989} of the WRT invariants (\cref{thm:asymp-Andersen} and \cref{CS}). 
The Witten asymptotic conjecture for Seifert $3$-manifolds is studied in different forms with different methods in~\cite{FreedGompf1991, Jeffrey1992, Rozansky1995, Rozansky1996, Hansen2005, HansenTakata2002, Hikami2005IJM, Hikami2005IMRN, Hikami2006, BeasleyWitten2005, Andersen2013, AndersenHimpel2012, Charles2016}. 
We remark that an asymptotic formula in~\cite[Theorem 16]{Hikami2006} obtained by using a combination of modular transformations of Eichler integrals and their derivatives differs from \cref{thm:asymp-Andersen} in this article, and conjecturally coincides.
 
Our main tools are modular transformation formulas for generalized false theta functions 
based on an idea in Bringmann-Nazaroglu~\cite{BringmannNazaroglu2019}. False theta functions are functions that are similar to the ordinary theta functions but have quite different behavior. 
It has a long history, and the name ``false theta functions" already appears in Ramanujan's last letter to Hardy in 1920. 
However, their modular aspects had long remained a mystery. In 2019, Bringmann and Nazaroglu succeed in finding modular completions for a certain class of false theta functions parallel with Zwegers' results~\cite{Zwegers2002} for mock theta functions. 
Following their results, we capture the homological blocks in the framework of modular forms. 

This article is organized as follows. In \cref{sec:hb}, we prepare the settings for the Seifert fibered integral homology $3$-sphere and the homological blocks. In \cref{sec:ftf}, we introduce ordinary theta functions and false theta functions and show their modular $S$-transformations. In \cref{sec:de-ftf}, we show the expression of the homological block in terms of false theta functions. Combining them, we obtain our main theorem on modular transformation formulas of the homological blocks. 
As an application of our modular transformation formulas, we have explicit
asymptotic expansion formulas of the WRT invariants.

\section{Homological blocks/WRT functions}
\label{sec:hb}
Gukov-Pei-Putrov-Vafa~\cite{GPPV2020} derived homological blocks for any plumbed $3$-manifolds associated with negative definite plumbing tree graphs as integrals based on Gukov-Putrov-Vafa \cite{GPV}. In this section, for any Seifert fibered integral homology $3$-sphere, we define the homological block as a $q$-series which is obtained by expanding the integral and explain a relation to the 
Witten-Reshetikhin-Turaev (WRT) invariant.

We denote the Seifert fibered $3$-manifold with $n$-singular fibers and surgery integers 
$p_1, \dots, p_n$ and $q_1, \dots, q_n$ by $\sm$. 
More precisely, we take pairwise coprime integers $p_1, \dots, p_n \ge 2$
and nonzero integers $q_1, \dots, q_n$ satisfying  
\begin{equation} \label{eq:integral-homology-condition}
p_1 \cdots p_n \sum_{j=1}^{n} \frac{q_j}{p_j} = 1.
\end{equation}
Then, $\sm$ is obtained by a rational surgery along a link 
$L_0 \cup L_1 \cup \cdots \cup L_n$ inside $S^{3}$ depicted in \cref{Figure1}.

\begin{figure}[h]
	\centering
	\includegraphics[width=6cm]{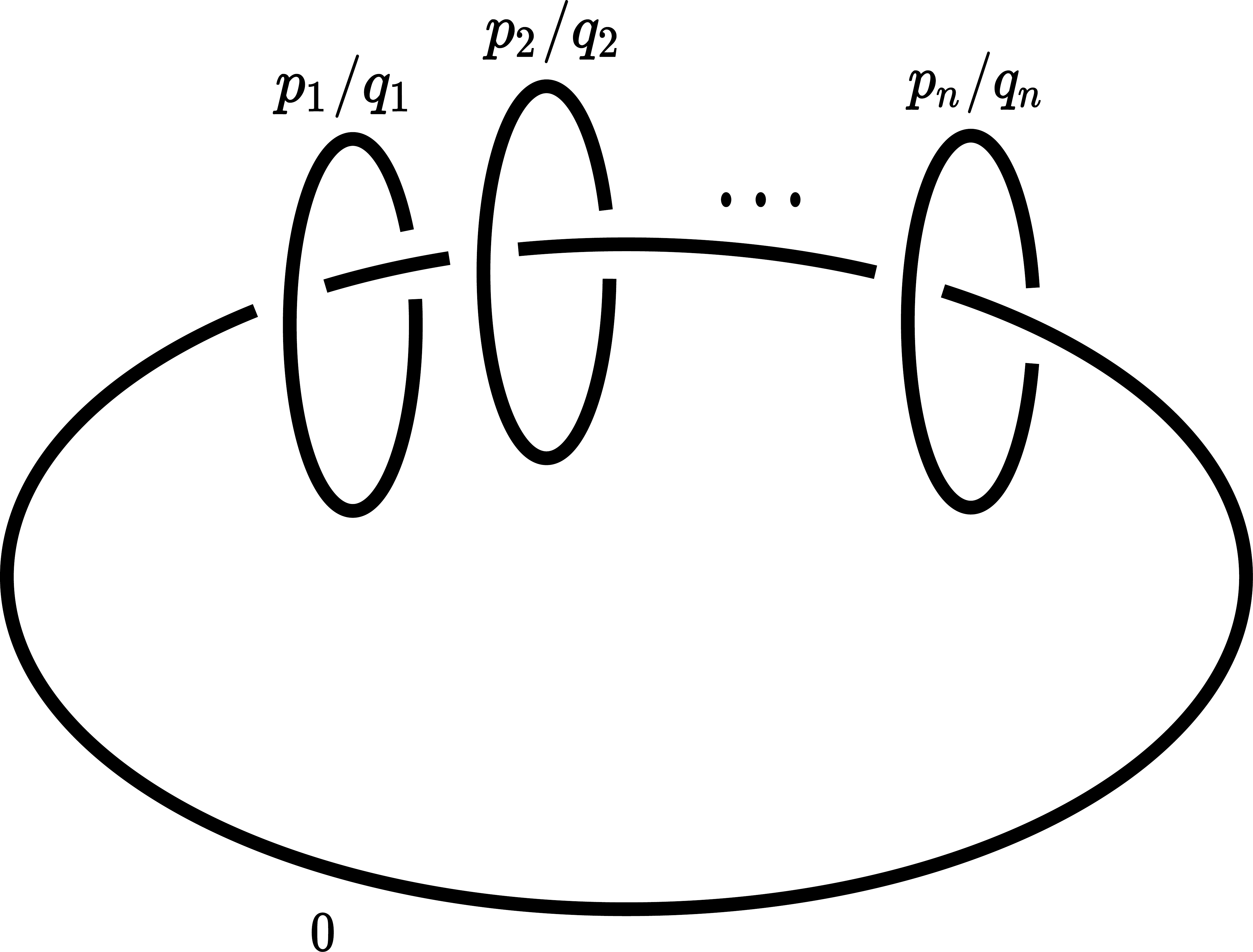}
	\caption{Surgery description of $M(p_1/q_1, \dots, p_n/q_n)$.}
	\label{Figure1}
\end{figure}

Here, the surgery indices of $L_0, L_1, \dots, L_n$ 
are $0, p_1/q_1, \dots, p_n/q_n$, respectively. 
The assumption \eqref{eq:integral-homology-condition} guarantees 
that the Seifert fibered $3$-manifold $M$ is an integral homology $3$-sphere. 
For a Seifert fibered integral homology $3$-sphere $M=\sm$, we define a homological block $\Phi(q)$ as the following $q$-series:

\begin{defi}\label{defi:WRT-function}
\begin{align}
\begin{split}
& \Phi (q) :=\frac{(-1)^n}{2(q^{\frac{1}{2}} - q^{-\frac{1}{2}})} 
\,\, q^{-\frac{1}{4}\Theta_0}\\
&\quad \times 
\sum_{(\varepsilon_1, \dots, \varepsilon_n) \in \{\pm 1 \}^n} 
\varepsilon_1 \cdots \varepsilon_n \,
\sum_{m=0}^{\infty} \dbinom{m+n-3}{n-3} \, 
q^{\frac{P}{4} (2m+ n-2 + 
\sum_{j=1}^{n}\frac{\varepsilon_j}{p_j})^2}. 
\label{eq:def-WRT-function}
\end{split}
\end{align}
Here, we have used the same notations in \cite{LawrenceRozansky1999}:
\begin{align}
P & := p_1 \cdots p_n, \\
\Theta_0 & := 3 - \frac{1}{P} 
+ 12 \sum_{j=1}^{n} s(q_j, p_j), 
\end{align}
where $s(q,p)$ is the Dedekind sum defined by 
\begin{equation}
s(q,p) := \frac{1}{4p} \sum_{\ell=1}^{p-1} \cot\Bigl( \frac{\pi \ell}{p} \Bigr) \, 
\cot \Bigl( \frac{\pi \ell q}{p} \Bigr).
\end{equation}
\end{defi}
This definition coincides with that of the WRT function introduced in \cite{FIMT2021}. It is shown in~\cite{AndersenMistegard2018} 
that the WRT function $\Phi(q)$
is essentially the same as the homological block $\widehat{Z}(q)$ in \cite{GPPV2020} for any Seifert fibered integral homology $3$-sphere. More precisely, 
$$
\Phi(q)=\frac{(-1)^n}{2(q^{\frac{1}{2}} - q^{-\frac{1}{2}})} 
\,\, q^{-\frac{1}{4}\Theta_0+\Delta}\ \widehat{Z}(q)
$$
holds for some explicit rational number $\Delta$. 
The following theorem is proved by Fuji-Iwaki-Murakami-Terashima \cite{FIMT2021} and Andersen-Misteg{\aa}rd~\cite{AndersenMistegard2018} independently in different ways:

\begin{thm}\label{thm:q-series-and-quantum-invariant}
For each $K \in {\mathbb Z}_{\ge 1}$, we have
\begin{equation} \label{eq:q-series-limiting-value}
\lim_{t \rightarrow 0+} \Phi\left(e^{\frac{2\pi i}{K}} \, e^{-t}\right) 
= 
\tau_K,
\end{equation}
where $\tau_K = \tau_K(M)$ is the WRT invariant with level $K$.
\end{thm}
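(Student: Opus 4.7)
The plan is to compute the radial limit on the left-hand side of \eqref{eq:q-series-limiting-value} directly, by an Euler-Maclaurin analysis of $\Phi(q)$ viewed as a finite linear combination of generalized false theta functions, and then match the result to the Lawrence-Rozansky surgery formula for $\tau_K(M)$. Expanding
$$
\binom{m+n-3}{n-3} = \frac{(m+1)(m+2)\cdots(m+n-3)}{(n-3)!}
$$
as a polynomial in $m$ of degree $n-3$, the inner sum in \eqref{eq:def-WRT-function} becomes a finite $\mathbb{Q}$-linear combination of series
$$
\Psi_{k,\varepsilon}(q) := \sum_{m \ge 0} m^{k}\, q^{\frac{P}{4}(2m + \beta_\varepsilon)^2}, \qquad \beta_\varepsilon := n-2 + \sum_{j=1}^{n} \frac{\varepsilon_j}{p_j},
$$
for $k = 0, 1, \dots, n-3$; the case $k=0$ is a classical partial (``false'') theta function and the cases $k \geq 1$ are its derivatives in a natural variable.

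Next, I substitute $q = e^{2\pi i/K} e^{-t}$ into each $\Psi_{k,\varepsilon}$. Since the factor $e^{2\pi i P(2m + \beta_\varepsilon)^2/(4K)}$ is periodic in $m$ with period $2PK$, the sum splits into residue classes modulo $2PK$, and within each class the remaining sum in $t$ is a one-dimensional Gaussian theta series. Applying the Lawrence-Zagier technique (via Euler-Maclaurin summation, in the spirit of Zagier's strange identity), each such series has an asymptotic expansion as $t \to 0+$ whose constant term is a computable finite Gauss-sum expression and whose remainder vanishes to all orders in $t$.

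Summing the constants so obtained over $\varepsilon \in \{\pm 1\}^n$ with weights $\varepsilon_1 \cdots \varepsilon_n$, and multiplying by the prefactor $(-1)^n q^{-\Theta_0/4}/(2(q^{1/2} - q^{-1/2}))$, the integrality condition \eqref{eq:integral-homology-condition} is expected to force the potentially divergent contributions, coming both from the simple pole of $1/(q^{1/2}-q^{-1/2})$ at $t = 0$ and from boundary terms in Euler-Maclaurin, to cancel. What remains is a finite arithmetic expression, which I would then identify with the Lawrence-Rozansky formula for $\tau_K(M)$: both have the same shape, a finite sum over residues modulo $2PK$ of polynomial-weighted Gauss sums, shifted by the Dedekind sums $s(q_j, p_j)$ that appear in $\Theta_0$.

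The main obstacle is the bookkeeping of the intermediate polynomial factors of degree $1, \dots, n-4$ when $n \ge 5$, together with the Dedekind-sum shift from $q^{-\Theta_0/4}$. Here I expect the symmetry $\varepsilon \mapsto -\varepsilon$, the parity of $(2m + \beta_\varepsilon)^2$, and Rademacher reciprocity for Dedekind sums to conspire so that all intermediate powers of $m$ cancel and only the leading quadratic Gauss sum required by Lawrence-Rozansky survives. Verifying this cancellation precisely, with no spurious error term, is the most delicate step; for $n = 3$ it is trivial and recovers the classical Lawrence-Zagier computation.
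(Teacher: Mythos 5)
First, a point of calibration: the paper does not prove \cref{thm:q-series-and-quantum-invariant} at all. It is imported as a known result of Fuji--Iwaki--Murakami--Terashima \cite{FIMT2021} and Andersen--Misteg{\aa}rd \cite{AndersenMistegard2018}, and indeed the later sections \emph{use} it as input (see \eqref{Psihat-WRT}) rather than derive it. So there is no in-paper argument to compare against. Your outline is nevertheless the ``standard'' strategy, essentially that of \cite{FIMT2021} generalizing Lawrence--Zagier ($n=3$, $(p_1,p_2,p_3)=(2,3,5)$) and Hikami (Brieskorn spheres): expand $\binom{m+n-3}{n-3}$ as a polynomial in $m$, reduce $\Phi$ to a finite combination of partial theta functions, compute the radial limit by Euler--Maclaurin, and match with the Lawrence--Rozansky formula.

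As a proof, however, the proposal has genuine gaps precisely at the two places you label ``expected.'' (i) The asymptotic $\sum_{m\ge 1} m^r C(m) e^{-m^2 t} \sim \sum_{j\ge 0} L(-2j-r,C)\,(-t)^j/j!$ (the paper's \cref{lem:asymp-L}) requires the periodic coefficient function to have mean value zero. After substituting $q=e^{2\pi i/K}e^{-t}$, the relevant function has period $2PK$ and its mean value is a quadratic Gauss sum twisted by the $\bm{\varepsilon}$-sum; showing it vanishes (or controlling it when it does not) is a nontrivial arithmetic step, analogous to but harder than the identity \eqref{ep-sum-vanish} that the paper proves via the Chinese remainder theorem in \cref{sec:Lfunct}. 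Without this, the limit in \eqref{eq:q-series-limiting-value} is not even known to exist. (ii) The identification of the resulting constant with $\tau_K(M)$ via Lawrence--Rozansky is the actual content of the theorem: the Lawrence--Rozansky formula is itself a residue/Gaussian-reciprocity computation, and asserting that the two sides ``have the same shape'' does not establish equality; this matching (including the Dedekind-sum bookkeeping from $q^{-\Theta_0/4}$) is where \cite{FIMT2021} and \cite{AndersenMistegard2018} expend most of their effort, by quite different routes. A smaller error: $1/(q^{1/2}-q^{-1/2})$ has \emph{no} pole at $q=e^{2\pi i/K}$ for $K\ge 2$ (it equals $1/(2i\sin(\pi/K))$), so the divergence-cancellation you invoke there arises only in the degenerate case $K=1$.
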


We note that some special cases of \cref{thm:q-series-and-quantum-invariant} 
were proved in previous works.
Lawrence-Zagier \cite{LawrenceZagier1999} prove the statement for the Poincar\'e homology sphere 
(i.e., $n=3$ and $(p_1, p_2, p_3) = (2,3,5)$), 
and Hikami~\cite{Hikami2005IJM} proves for the Brieskorn homology spheres  
(i.e.,  $n=3$ and general pairwise coprime triple $(p_1, p_2, p_3)$). 
\cref{thm:q-series-and-quantum-invariant} suggests that the $q$-series $\Phi(q)$ is a kind of an analytic continuation of 
the WRT invariant $\tau_K$ with respect to $K$ from 
integers to complex numbers.

\section{False theta functions}
\label{sec:ftf}

In this section, we generalize Bringmann-Nazaroglu's results~\cite{BringmannNazaroglu2019} to false theta functions related to the homological blocks. Let $\bbH$ denote the upper half-plane $\bbH = \{\tau \in \mathbb{C} \mid \Im(\tau) > 0\}$ and put $q = e^{2\pi i \tau}$ for $\tau \in \bbH$. For a positive integer $M > 0$, let $L = \sqrt{M} \mathbb{Z}$ be a lattice and $L' = (1/\sqrt{M}) \mathbb{Z}$ be its dual lattice. Throughout this article, we take a branch of the square root satisfying $\arg \sqrt{z} \in (-\pi/2, \pi/2]$. Then ordinary theta functions and false theta functions are defined as follows.

\begin{defi}\label{defi:false-theta}
	For each $k \in \mathbb{Z}_{\geq 0}$ and $\mu \in L'/L$, we define an \emph{ordinary theta function} $\theta_{k, \mu}(\tau)$ and a \emph{false theta function} $\widetilde{\theta}_{k,\mu}(\tau)$ by
	\begin{align*}
		\theta_{k, \mu}(\tau) = \sum_{n \in L + \mu} n^k q^{\frac{n^2}{2}}, \qquad \widetilde{\theta}_{k, \mu}(\tau) = \sum_{n \in L + \mu} \sgn(n) n^k q^{\frac{n^2}{2}},
	\end{align*}
	where $\sgn : \mathbb{R} \to \{-1, 0, 1\}$ is the usual sign function with $\sgn(0) = 0$.
\end{defi}

It is well-known that the ordinary theta functions for $k=0,1$ are classical holomorphic modular forms. As for the false theta functions,
in the case of $k=0$, Bringmann and Nazaroglu showed the $S$-transformation
\begin{align}\label{eq:BN-Strans}
    \begin{split}
	&\widetilde{\theta}_{0,\mu} \left(-\frac{1}{\tau} \right) + \sgn(\Re(\tau)) \frac{(-i\tau)^{1/2}}{\sqrt{M}} \sum_{\nu \in L'/L} e^{2\pi i \mu \nu} \widetilde{\theta}_{0,\nu}(\tau)\\
	&\quad = -i \int_0^{i\infty} \frac{\theta_{1,\mu}(z)}{\sqrt{-i (z+1/\tau)}} dz,
	\end{split}
\end{align}
where $\Re(\tau) \neq 0$. 
In the following subsections, we show the $S$-transformations of the ordinary and (a certain linear combination of) false theta functions for any $k \geq 0$.
The following proposition, so-called Poisson's summation formula, is the most standard tool to show the $S$-transformation of theta functions.

\begin{prop}[Poisson's summation formula]
    Let $f: \mathbb{R} \to \mathbb{C}$ be a Schwartz function. For any $x \in \mathbb{R}$, we have
    \[
        \mathrm{vol}(\mathbb{R}/L) \sum_{n \in L} f(n+x) = \sum_{n \in L'} \mathcal{F}(f)(n) e^{2\pi i nx},
    \]
    where $\mathcal{F}(f)$ is the \emph{Fourier transform} of $f$ defined by
    \[
        \mathcal{F}(f)(x) = \int_{-\infty}^\infty f(y) e^{-2\pi i xy} dy.
    \]
\end{prop}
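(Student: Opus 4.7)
The plan is to carry out the classical Fourier-series argument, adapted to the lattice $L = \sqrt{M}\,\mathbb{Z}$. First I would form the periodization $F(x) := \sum_{n \in L} f(n+x)$. Because $f$ is Schwartz, this series and all of its termwise derivatives converge absolutely and uniformly on compact subsets of $\mathbb{R}$, so $F$ is smooth. It is also $L$-periodic: for any $m \in L$, $F(x+m) = \sum_{n \in L} f(n+m+x) = F(x)$ after reindexing $n \mapsto n - m$.

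Next I would expand $F$ in a Fourier series with respect to the period $\sqrt{M}$. The unitary characters of $\mathbb{R}/L$ are exactly $x \mapsto e^{2\pi i \nu x}$ with $\nu \in L'$, so by the standard Fourier inversion theorem for smooth periodic functions,
\[
F(x) = \sum_{\nu \in L'} c_\nu \, e^{2\pi i \nu x}, \qquad c_\nu = \frac{1}{\sqrt{M}} \int_0^{\sqrt{M}} F(x) \, e^{-2\pi i \nu x} \, dx,
\]
with absolute and uniform convergence.

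Then I would compute $c_\nu$ by unfolding. Interchanging sum and integral (legitimate by Schwartz decay) and substituting $y = n+x$ in each summand,
\[
c_\nu = \frac{1}{\sqrt{M}} \sum_{n \in L} \int_0^{\sqrt{M}} f(n+x) \, e^{-2\pi i \nu x} \, dx = \frac{1}{\sqrt{M}} \int_{-\infty}^{\infty} f(y) \, e^{-2\pi i \nu y} \, dy,
\]
where I used $e^{2\pi i \nu n} = 1$ since $\nu n \in \mathbb{Z}$ for every $\nu \in L'$ and $n \in L$. Hence $c_\nu = \mathcal{F}(f)(\nu)/\sqrt{M}$, and multiplying both sides of the Fourier expansion by $\sqrt{M} = \mathrm{vol}(\mathbb{R}/L)$ yields the claimed identity.

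There is no real obstacle: this is the textbook proof of the Poisson summation formula. The only analytic points that need checking are the absolute convergence of the periodization together with its derivatives and the interchange of sum and integral, both immediate from the Schwartz hypothesis. The mild ``translation'' by $x$ in the formulation is handled automatically, since the dependence on $x$ enters only through the common shift in each summand and reappears as the factor $e^{2\pi i \nu x}$ after Fourier expansion.
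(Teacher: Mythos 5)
Your argument is the standard textbook proof of Poisson summation (periodize, expand in a Fourier series on $\mathbb{R}/L$, unfold the coefficient integral), and it is correct: the Schwartz hypothesis justifies the smoothness of the periodization, the absolute convergence of the Fourier series, and the interchange of sum and integral, and the identity $e^{2\pi i \nu n}=1$ for $\nu\in L'$, $n\in L$ is exactly what makes the unfolding work, with the factor $\mathrm{vol}(\mathbb{R}/L)=\sqrt{M}$ appearing from the normalization of the Fourier coefficients. The paper states this proposition without proof, treating it as classical background, so there is no argument to compare against; your write-up supplies precisely the standard proof one would cite.
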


\subsection{Ordinary theta functions}
\label{sec:Sth}

The next lemma immediately follows.

\begin{lem}\label{Fourier-phi-k}
	For $k = 0,1$, let $\phi_k(\tau, x) = x^k e^{\pi i x^2 \tau}$. Then we have
	\begin{align*}
		\mathcal{F}(\phi_k(\tau, \cdot))(x) = (-i)^{-1/2} \tau^{-1/2-k} \phi_k \left(-\frac{1}{\tau}, x \right).
	\end{align*}
\end{lem}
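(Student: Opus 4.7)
The plan is to compute the Fourier transform directly for $k=0$ by reducing it to a Gaussian integral via contour shifting, and then bootstrap to $k=1$ using the intertwining between multiplication by $x$ and differentiation under the Fourier transform.

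For $k=0$, I start from
\[
\mathcal{F}(\phi_0(\tau,\cdot))(x) = \int_{-\infty}^{\infty} e^{\pi i \tau y^2 - 2\pi i xy}\, dy.
\]
Since $\tau \in \bbH$, the quantity $-i\tau$ has positive real part, so the integrand is Schwartz in $y$ and Cauchy's theorem justifies completing the square $\pi i\tau y^2 - 2\pi i xy = \pi i\tau\bigl(y - x/\tau\bigr)^2 - \pi i x^2/\tau$ and shifting the contour by $x/\tau$. The remaining integral is the standard Gaussian $\int_{-\infty}^{\infty} e^{\pi i\tau y^2}\, dy = (-i\tau)^{-1/2}$, giving $\mathcal{F}(\phi_0(\tau,\cdot))(x) = (-i\tau)^{-1/2} e^{-\pi i x^2/\tau} = (-i\tau)^{-1/2}\phi_0(-1/\tau, x)$.

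For $k=1$, the plan is to use the identity
\[
\phi_1(\tau, y) = y\, e^{\pi i \tau y^2} = \frac{1}{2\pi i\tau}\frac{d}{dy}\phi_0(\tau, y),
\]
together with the rule $\mathcal{F}(f')(x) = 2\pi i x\, \mathcal{F}(f)(x)$, which reduces everything to the $k=0$ result:
\[
\mathcal{F}(\phi_1(\tau,\cdot))(x) = \frac{x}{\tau}\,\mathcal{F}(\phi_0(\tau,\cdot))(x) = \frac{x}{\tau}\,(-i\tau)^{-1/2}\phi_0(-1/\tau, x).
\]
Comparing with $(-i)^{-1/2}\tau^{-3/2}\phi_1(-1/\tau, x) = (-i)^{-1/2}\tau^{-3/2}\, x\, \phi_0(-1/\tau, x)$, the claim for $k=1$ amounts to the branch identity $(-i\tau)^{-1/2} = (-i)^{-1/2}\tau^{-1/2}$, and similarly for $k=0$.

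The main (and only) delicate point is this branch bookkeeping under the convention $\arg\sqrt{z}\in(-\pi/2,\pi/2]$. For $\tau = re^{i\theta}$ with $\theta\in(0,\pi)$, one has $\arg(-i\tau) = \theta - \pi/2 \in (-\pi/2, \pi/2)$, so $\sqrt{-i\tau} = \sqrt{r}\, e^{i(\theta-\pi/2)/2}$, while $\sqrt{-i} = e^{-i\pi/4}$ and $\sqrt{\tau} = \sqrt{r}\, e^{i\theta/2}$, whose product matches. Thus the factorization of square roots is legitimate on all of $\bbH$, and the two cases together complete the proof.
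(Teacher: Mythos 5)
Your computation is correct: the Gaussian integral with the contour shift gives the $k=0$ case, the derivative intertwining reduces $k=1$ to it, and your branch check $\sqrt{-i\tau}=\sqrt{-i}\sqrt{\tau}$ on $\bbH$ is exactly the point that needs verifying under the paper's convention. The paper omits the proof entirely (``immediately follows''), and what you wrote is the standard argument it has in mind.
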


The following $S$-transformation formulas for ordinary theta functions are shown by this lemma and Poisson's summation formula.

\begin{prop}\label{theta-01-S-trans}
    For $k=0,1$,
    \[
        \theta_{k, \mu} \left(-\frac{1}{\tau} \right) = \frac{(-1)^k (-i)^{1/2}}{\sqrt{M}} \tau^{k+1/2} \sum_{\nu \in L'/L} e^{2\pi i \mu \nu} \theta_{k, \nu}(\tau).
	\]
\end{prop}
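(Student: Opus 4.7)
The plan is to derive both $S$-transformations simultaneously by a single application of Poisson's summation formula to the lattice $L$, feeding in \cref{Fourier-phi-k} as the Fourier-transform input. Specializing \cref{Fourier-phi-k} at $-1/\tau$ in place of $\tau$ yields
\[
\mathcal{F}(\phi_k(-1/\tau, \cdot))(x) = (-i)^{-1/2}(-1/\tau)^{-1/2-k}\phi_k(\tau, x).
\]

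Once this is in hand, I would write $\theta_{k,\mu}(-1/\tau) = \sum_{m \in L}\phi_k(-1/\tau, m+\mu)$ and apply Poisson's summation formula with $f = \phi_k(-1/\tau, \cdot)$, shift $x = \mu$, and $\mathrm{vol}(\mathbb{R}/L) = \sqrt{M}$. The right-hand side becomes $\sum_{n \in L'} \phi_k(\tau, n)\,e^{2\pi i n\mu}$, scaled by $(-i)^{-1/2}(-1/\tau)^{-1/2-k}/\sqrt{M}$. I would then group this sum by cosets modulo $L$, writing $n = \nu + m$ with $\nu \in L'/L$ and $m \in L$. Because $L \cdot L' \subseteq \mathbb{Z}$ (as $\sqrt{M}\mathbb{Z} \cdot (1/\sqrt{M})\mathbb{Z} \subseteq \mathbb{Z}$), the phase $e^{2\pi i n\mu}$ depends only on the coset representative $\nu$, and the inner sum over each coset reassembles into $\theta_{k,\nu}(\tau)$, producing the sum $\sum_{\nu \in L'/L} e^{2\pi i \mu\nu}\theta_{k,\nu}(\tau)$ that appears on the right-hand side of the desired formula.

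The remaining task, and the only subtle point, is to identify the scalar prefactor $(-i)^{-1/2}(-1/\tau)^{-1/2-k}$ with $(-1)^k(-i)^{1/2}\tau^{k+1/2}$ under the branch convention $\arg\sqrt z \in (-\pi/2, \pi/2]$. For $\tau \in \bbH$ one has $-1/\tau \in \bbH$, and a short argument count gives $(-1/\tau)^{1/2} = i/\sqrt\tau$; the claimed identity then reduces to a direct check for each of the two values $k \in \{0,1\}$. This branch bookkeeping is the main thing to be careful about, but it is essentially a one-line verification; the combinatorial heart of the proof is a single application of Poisson summation followed by the coset reshuffling above.
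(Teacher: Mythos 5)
Your proposal is correct and follows exactly the route the paper intends: the paper gives no written proof, simply stating that the proposition follows from \cref{Fourier-phi-k} and Poisson's summation formula, which is precisely your Poisson-plus-coset-regrouping argument. Your branch bookkeeping also checks out: with $\arg\sqrt{z}\in(-\pi/2,\pi/2]$ one indeed has $(-1/\tau)^{1/2}=i/\sqrt{\tau}$ and $(-i)^{-1/2}=i\,(-i)^{1/2}$, so the prefactor $(-i)^{-1/2}(-1/\tau)^{-1/2-k}$ equals $(-1)^k(-i)^{1/2}\tau^{k+1/2}$ as claimed.
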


We remark that although \cref{Fourier-phi-k} does not hold for $k>1$, it can be generalized for every $k \geq 0$. To be more precise, we introduce the Hermite polynomials $H_n(x)$ defined by
\[
	e^{-2\pi t(2x+t)} = \sum_{n=0}^\infty H_n(x) \frac{t^n}{n!}.
\]
The first few examples are given by $H_0(x) = 1, H_1(x) = -4\pi x, H_2(x) = 16\pi^2 x^2 - 4\pi$. Then we can check that the function
\[
	\mathcal{H}_k(\tau,x) = \Im(\tau)^{-k/2} H_k(x \sqrt{\Im(\tau)}) e^{\pi i x^2 \tau}
\]
satisfies a similar formula
\[
	\mathcal{F}(\mathcal{H}_k(\tau, \cdot))(x) = (-i)^{-1/2} \tau^{-1/2-k} \mathcal{H}_k \left(-\frac{1}{\tau}, x\right).
\]
For the proof, see Vign\'eras~\cite[Lemma 4]{Vigneras1977-proc}.

Now, we generalize \cref{theta-01-S-trans} for $k>1$ in another way. To express the $S$-transformation $\theta_{k,\mu}(-1/\tau)$ in general cases, we need theta functions of different weights.

\begin{prop}\label{theta-S-transform}
	For $k = 2\kappa+\iota$ with $\kappa \in \mathbb{Z}_{\geq 0}$ and $\iota = 0,1$, 
	\[
		\theta_{k, \mu} \left(-\frac{1}{\tau} \right) = \frac{(-1)^\iota (-i)^{1/2}}{\sqrt{M}} \frac{\tau^{\kappa+\iota+\frac{1}{2}}}{(2\pi i)^\kappa} \sum_{\nu \in L'/L} e^{2\pi i \mu \nu} \sum_{r=0}^\kappa d_{\kappa, \iota,r} \tau^r \theta_{2r+\iota, \nu}(\tau),
	\]
	where we put
	\[
		d_{\kappa,\iota,r} = \begin{cases}
			\dfrac{(4\pi i)^r (2\kappa+\iota)!}{2^\kappa (\kappa-r)! (2r+\iota)!} &\text{if } 0 \leq r \leq \kappa,\\
			0 &\text{if otherwise}.
		\end{cases}
	\]
\end{prop}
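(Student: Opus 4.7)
The plan is to prove the formula by induction on $\kappa$, using the raising operator $\tau^{2}\partial_{\tau}/(\pi i)$ applied to the base case $\kappa = 0$ supplied by \cref{theta-01-S-trans}.

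First, I would record the elementary differentiation identity
\[
\theta_{k+2,\mu}(\tau) = \frac{1}{\pi i}\frac{d}{d\tau}\theta_{k,\mu}(\tau),
\]
which follows by differentiating the defining series term by term (each term $n^{k}q^{n^{2}/2}$ acquires a factor $\pi i \cdot n^{2}$). Combining this with the chain rule $\frac{d}{d\tau}\theta_{k,\mu}(-1/\tau) = \frac{\pi i}{\tau^{2}}\theta_{k+2,\mu}(-1/\tau)$ gives the key identity
\[
\theta_{k+2,\mu}\!\left(-\tfrac{1}{\tau}\right) = \frac{\tau^{2}}{\pi i}\frac{d}{d\tau}\theta_{k,\mu}\!\left(-\tfrac{1}{\tau}\right).
\]
So, starting from the formula for weight $k = \iota \in \{0,1\}$ (the base case $\kappa = 0$, where $d_{0,\iota,0} = 1$ reduces the statement to \cref{theta-01-S-trans}), I can raise the weight in increments of two.

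Second, assuming inductively that the claimed formula holds for index $\kappa$, I would apply $\tau^{2}\partial_{\tau}/(\pi i)$ to both sides. Differentiating each term $\tau^{\kappa+\iota+1/2+r}\theta_{2r+\iota,\nu}(\tau)$ on the right produces two pieces: a power-rule piece $(\kappa+\iota+\tfrac{1}{2}+r)\tau^{\kappa+\iota-1/2+r}\theta_{2r+\iota,\nu}(\tau)$ and a theta-derivative piece $\pi i\,\tau^{\kappa+\iota+1/2+r}\theta_{2r+\iota+2,\nu}(\tau)$. Multiplying by $\tau^{2}/(\pi i)$, using $(2\pi i)^{\kappa}\cdot \pi i = (2\pi i)^{\kappa+1}/2$ to renormalize the prefactor, and collecting according to the new summation index $r'$ (matching $r = r'$ for the power-rule contribution and $r = r'-1$ for the theta-derivative contribution), the problem reduces to verifying the recursion
\[
d_{\kappa+1,\iota,r'} = (2\kappa+2\iota+1+2r')\,d_{\kappa,\iota,r'} + 2\pi i\cdot d_{\kappa,\iota,r'-1}.
\]

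Third, plugging in the closed form $d_{\kappa,\iota,r} = \dfrac{(4\pi i)^{r}(2\kappa+\iota)!}{2^{\kappa}(\kappa-r)!(2r+\iota)!}$, the recursion is equivalent, after clearing common factors, to the polynomial identity
\[
(2\kappa+2+\iota)(2\kappa+1+\iota) = 2(\kappa+1-r)(2\kappa+2\iota+1+2r) + (2r+\iota)(2r-1+\iota),
\]
which is a direct algebraic check: with $a = 2\kappa$, $b = \iota$, both sides expand to $(a+b)^{2} + 3(a+b) + 2$.

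The main obstacle is purely bookkeeping: correctly tracking the shift of exponent in $\tau$ and the index shift $r \mapsto r-1$ when differentiation turns $\theta_{2r+\iota,\nu}$ into $\theta_{2r+\iota+2,\nu}$, while properly handling the normalizing factor $(2\pi i)^{\kappa}$. There is no conceptual obstacle, since the raising operator $\tau^{2}\partial_{\tau}/(\pi i)$ manifestly intertwines the $S$-transformation in weight $k$ with that in weight $k+2$, and the $\tau^{r}$-expansion in the statement is exactly what records the explicit polynomial combinations that arise from iterating this raising operator starting from \cref{theta-01-S-trans}.
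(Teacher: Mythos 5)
Your proposal is correct and is essentially identical to the paper's own proof: induction on $\kappa$ starting from \cref{theta-01-S-trans}, applying the raising operator $\frac{\tau^{2}}{\pi i}\frac{d}{d\tau}$ to both sides of the case $k=2\kappa+\iota$, and reducing the inductive step to the recursion $d_{\kappa+1,\iota,r} = 2\bigl((\kappa+\iota+r+1/2)\,d_{\kappa,\iota,r} + \pi i\, d_{\kappa,\iota,r-1}\bigr)$, which matches the one in your write-up. The only difference is that you explicitly verify that the closed form of $d_{\kappa,\iota,r}$ satisfies this recursion (a check the paper leaves implicit), and that verification is accurate.
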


\begin{proof}
    It follows from induction on $\kappa$. The initial case of $\kappa=0$ holds by \cref{theta-01-S-trans}. If the $S$-transformation for $k = 2\kappa+\iota$ holds, then by taking the derivative in $\tau$ on both sides, we have
    \begin{align*}
    	\theta_{k+2, \mu} \left(-\frac{1}{\tau}\right) &= \frac{(-1)^\iota (-i)^{1/2}}{\sqrt{M}} \frac{\tau^{(\kappa+1)+\iota+\frac{1}{2}}}{(2\pi i)^{\kappa+1}} \sum_{\nu \in L'/L} e^{2\pi i \mu \nu}\\
    	&\quad \times \sum_{r=0}^{\kappa+1} 2\bigg((\kappa+\iota+r+1/2) d_{\kappa, \iota,r} + \pi i d_{\kappa, \iota,r-1} \bigg) \tau^r \theta_{2r+\iota, \nu}(\tau).
    \end{align*}
    The recursion
    \begin{align}\label{d-recursion}
        d_{\kappa+1,\iota,r} = 2\bigg((\kappa+\iota+r+1/2) d_{\kappa, \iota,r} + \pi i d_{\kappa, \iota,r-1} \bigg)
    \end{align}
    yields the claim for $k+2 = 2(\kappa+1)+\iota$.
\end{proof}

\subsection{Bivariate false theta functions}
\label{sec:Sfth-1}

We now recall Bringmann-Nazaroglu's function introduced in~\cite{BringmannNazaroglu2019}. For $(\tau,w,x) \in \bbH \times \bbH \times \mathbb{R}$ and $k \in \mathbb{Z}_{\geq 0}$, we define the function $F_{k,\tau,w}(x)$ by
\[
	F_{k,\tau, w}(x) = \sqrt{i(w-\tau)} x^k \erf \left(-i \sqrt{\pi i (w-\tau)} x \right) e^{\pi ix^2 \tau},
\]
where $\erf(z) = \frac{2}{\sqrt{\pi}} \int_0^z e^{-t^2} dt$ denotes the \emph{error function}.
Then the authors showed in~\cite[Lemma 2.1]{BringmannNazaroglu2019} the formula of its Fourier transform for $k = 0$.

\begin{lem}
    \begin{align}\label{eq:Fourier-F}
        \mathcal{F}(F_{0, \tau,w})(x) = i^{1/2} w^{1/2} F_{0, -\frac{1}{\tau}, -\frac{1}{w}}(x).
    \end{align}
\end{lem}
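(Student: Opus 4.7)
The plan is to prove the identity by showing that both sides, viewed as functions of $w$ for fixed $\tau \in \bbH$ and $x \in \mathbb{R}$, satisfy the same linear first-order ODE with the same initial value at $w = \tau$. First, I would differentiate $F_{0,\tau,w}(y)$ in $w$ using the product rule together with $\frac{d}{dz}\erf(z) = \frac{2}{\sqrt{\pi}} e^{-z^2}$. The derivative of $\erf(-i\sqrt{\pi i(w-\tau)}\,y)$ produces a Gaussian whose $\tau$-dependence collapses the factor $e^{\pi i y^2 \tau}$ into $e^{\pi i y^2 w}$, and after cancelling the $\sqrt{i(w-\tau)}$ factors one obtains
\[
\frac{\partial F_{0,\tau,w}(y)}{\partial w} = \frac{F_{0,\tau,w}(y)}{2(w-\tau)} + y\, e^{\pi i y^2 w}.
\]

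Next, I would take the Fourier transform in $y$ of both sides. The exchange of $\mathcal{F}$ with $\partial_w$ is justified by the Gaussian-type decay of $F_{0,\tau,w}(y)$, which is dominated by $e^{-\pi y^2 \Im(\tau)}$ times a bounded error-function tail. The Fourier transform of the inhomogeneous term is supplied directly by \cref{Fourier-phi-k} with $k=1$:
\[
\mathcal{F}(y\, e^{\pi i y^2 w})(x) = (-i)^{-1/2} w^{-3/2} x\, e^{-\pi i x^2/w} = \frac{x}{w}(-iw)^{-1/2} e^{-\pi i x^2/w},
\]
the last equality using the branch identity $(-iw)^{-1/2} = i^{1/2} w^{-1/2}$, valid on $\bbH$ under the convention $\arg\sqrt{\,\cdot\,} \in (-\pi/2, \pi/2]$. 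Hence $\mathcal{F}(F_{0,\tau,w})(x)$ satisfies
\[
\frac{\partial}{\partial w}\mathcal{F}(F_{0,\tau,w})(x) = \frac{\mathcal{F}(F_{0,\tau,w})(x)}{2(w-\tau)} + \frac{x}{w}(-iw)^{-1/2} e^{-\pi i x^2/w}.
\]

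Finally, I would set $G(w) := i^{1/2} w^{1/2} F_{0,-1/\tau,-1/w}(x)$ and verify by direct calculation that $G$ satisfies the same ODE. Applying the chain rule together with the above ODE for $F_{0,-1/\tau,\tilde w}$ at $\tilde w = -1/w$, and simplifying via $-1/w + 1/\tau = (w-\tau)/(w\tau)$, the factor $\tau$ cancels and one obtains $\partial_w G = G/[2(w-\tau)] + (i^{1/2}\, x/w^{3/2})\, e^{-\pi i x^2/w}$; the inhomogeneous term then matches by the same branch identity $i^{1/2}/w^{1/2} = (-iw)^{-1/2}$. Since $\erf(0) = 0$ forces both $\mathcal{F}(F_{0,\tau,w})(x)$ and $G(w)$ to vanish at $w = \tau$, uniqueness for this linear first-order ODE yields the equality. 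The main obstacle is the careful branch bookkeeping surrounding $(-iw)^{-1/2} = i^{1/2} w^{-1/2}$ and the analogous simplifications of $\sqrt{i(w-\tau)}$ and $\sqrt{\pi i(w-\tau)}$ in the derivative calculation; once these branch identifications are set up correctly, every other step is a routine application of the chain rule and the Gaussian Fourier transform \cref{Fourier-phi-k}.
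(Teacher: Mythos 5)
Your ODE strategy is a genuinely different route from the source: the paper does not prove this lemma at all but quotes it from Bringmann--Nazaroglu, whose argument is a direct computation (essentially: expand $\erf$ via the integral representation \eqref{eq:erf-integral}, interchange the two integrals, apply the Gaussian transform of \cref{Fourier-phi-k} with $k=1$ inside, and change variables to recognize the result as $F_{0,-1/\tau,-1/w}$). Your preliminary steps all check out: the derivative computation gives $\partial_w F_{0,\tau,w}(y) = \tfrac{1}{2(w-\tau)}F_{0,\tau,w}(y) + y\,e^{\pi i y^2 w}$ exactly as you state, the branch identity $(-iw)^{-1/2}=i^{1/2}w^{-1/2}$ is valid for $w\in\bbH$ under the stated convention, differentiation under the Fourier integral is legitimate (the cleanest domination is via \eqref{eq:erf-integral}, which bounds the integrand by $|y|\,e^{-\pi y^2\min(\Im\tau,\Im w)}$ up to constants, rather than by "a bounded error-function tail" --- $\erf$ of a complex argument is not bounded on its own), and the verification that $G(w)=i^{1/2}w^{1/2}F_{0,-1/\tau,-1/w}(x)$ satisfies the same inhomogeneous equation is correct, with the $\tau$'s cancelling as claimed.

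The genuine gap is the final uniqueness step. The point $w=\tau$ is a \emph{singular} point of your ODE: the homogeneous equation $D'=D/(2(w-\tau))$ has the nontrivial solution $D(w)=c\sqrt{w-\tau}$, and \emph{every} such solution vanishes at $w=\tau$. So "both sides vanish at $w=\tau$" does not force them to coincide; the difference of your two solutions is a priori $c\sqrt{w-\tau}$ with $c$ undetermined, and the standard existence--uniqueness theorem you invoke does not apply at a point where the coefficient blows up. The repair is easy but must be said: since $\erf(z)=\tfrac{2}{\sqrt{\pi}}z+O(z^3)$, one has $F_{0,\tau,w}(y)=2(w-\tau)y\,e^{\pi i y^2\tau}+O((w-\tau)^2)$ uniformly enough in $y$ that both $\mathcal{F}(F_{0,\tau,w})(x)$ and $G(w)$ are $O(w-\tau)=o\bigl((w-\tau)^{1/2}\bigr)$ as $w\to\tau$, which forces $c=0$. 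With that one sentence added, your proof is complete.
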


By using this lemma and Poisson's summation formula, the authors derived the $S$-transformation~\eqref{eq:BN-Strans} of the false theta function for $k=0$. (We will explain the relationship between the function $F_{0,\tau,w}(x)$ and the false theta function $\widetilde{\theta}_{0,\mu}(\tau)$ in \cref{theta-hat-limit}). 
To generalize the $S$-transformation to the case of $k>0$, we need to calculate the Fourier transform of $F_{1,\tau,w}(x)$ as the first step. 

\begin{lem}
	\[
		\mathcal{F}(F_{1,\tau,w})(x) = i^{1/2} w^{1/2} \left(\tau^{-1} F_{1,-\frac{1}{\tau}, -\frac{1}{w}}(x) - \frac{1}{\pi i} \frac{w - \tau}{\tau w} e^{-\pi i x^2/w} \right).
	\]
\end{lem}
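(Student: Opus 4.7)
The cleanest route is to reduce this to the $k=0$ case already established in~\eqref{eq:Fourier-F}. Observe that directly from the definition,
\[
F_{1,\tau,w}(x) = x\, F_{0,\tau,w}(x),
\]
so multiplication by $x$ on the spatial side corresponds to differentiation on the Fourier side. With the convention $\mathcal{F}(f)(x) = \int f(y) e^{-2\pi i xy}\,dy$ used in the paper, one has the standard identity
\[
\mathcal{F}(xf)(x) = -\frac{1}{2\pi i}\,\frac{d}{dx}\mathcal{F}(f)(x).
\]
Thus the plan is: apply this operator to the known formula $\mathcal{F}(F_{0,\tau,w})(x) = i^{1/2} w^{1/2} F_{0,-1/\tau,-1/w}(x)$, and compute $\frac{d}{dx} F_{0,-1/\tau,-1/w}(x)$ by the product rule.

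The derivative computation splits naturally into two pieces. Writing $\tau' = -1/\tau$, $w' = -1/w$ (so $w'-\tau' = (w-\tau)/(\tau w)$), the factor $\erf(-i\sqrt{\pi i(w'-\tau')}x)$ contributes, via $\frac{d}{dx}\erf(ax) = \frac{2a}{\sqrt\pi} e^{-a^2 x^2}$ with $a = -i\sqrt{\pi i(w'-\tau')}$, a term in which the exponential factors combine as
\[
e^{\pi i(w'-\tau')x^2}\cdot e^{\pi i x^2 \tau'} = e^{\pi i w' x^2} = e^{-\pi i x^2/w},
\]
while the differentiation of $e^{\pi i x^2 \tau'}$ reproduces $F_{0,\tau',w'}(x)$ up to the factor $2\pi i x \tau'$. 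A short simplification, using $\sqrt{i(w'-\tau')}\cdot \sqrt{\pi i(w'-\tau')} = \sqrt{\pi}\, i(w'-\tau')$, collapses the first piece to $2(w'-\tau') e^{-\pi i x^2/w}$. Hence
\[
\frac{d}{dx} F_{0,\tau',w'}(x) = 2\,\frac{w-\tau}{\tau w}\, e^{-\pi i x^2/w} - \frac{2\pi i x}{\tau}\, F_{0,\tau',w'}(x).
\]

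Multiplying by $-i^{1/2}w^{1/2}/(2\pi i)$ and using $x F_{0,\tau',w'}(x) = F_{1,\tau',w'}(x)$ yields exactly the claimed identity. The only real pitfall is the square root branch bookkeeping in the simplification $\sqrt{i(w'-\tau')}\sqrt{\pi i(w'-\tau')} = \sqrt{\pi}\,i(w'-\tau')$: one must verify that the branch convention $\arg\sqrt{z}\in(-\pi/2,\pi/2]$ fixed at the start of \cref{sec:ftf}, together with $\tau,w\in\bbH$ (so that $w-\tau$ and $i(w-\tau)$ lie in a half-plane where the square root is holomorphic and the product rule applies without an extra sign), makes this identity hold globally. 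Once that is checked the rest is a mechanical product-rule computation.
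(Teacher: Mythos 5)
Your proposal is correct and is exactly the paper's argument: the paper's proof reads ``It follows from taking the derivative of \eqref{eq:Fourier-F} in $x$,'' which is precisely your reduction of $F_{1,\tau,w}=xF_{0,\tau,w}$ to a derivative of the $k=0$ formula via $\mathcal{F}(xf)(x)=-\tfrac{1}{2\pi i}\tfrac{d}{dx}\mathcal{F}(f)(x)$. Your spelled-out product-rule computation and the branch check for $\sqrt{i(w'-\tau')}\sqrt{\pi i(w'-\tau')}=\sqrt{\pi}\,i(w'-\tau')$ are both correct.
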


\begin{proof}
	It follows from taking the derivative of \eqref{eq:Fourier-F} in $x$.
\end{proof}

Based on Bringmann-Nazaroglu's idea in~\cite{BringmannNazaroglu2019}, we introduce two types of theta functions with two variables $(\tau,w) \in \bbH \times \bbH$,
\begin{align}\label{two-theta-series}
\begin{split}
	\widehat{\Theta}_{k,\mu}(\tau, w) &= \sum_{n \in L + \mu} n^k \erf \left(-i \sqrt{\pi i(w-\tau)} n \right) e^{\pi i n^2 \tau}, \text{ and}\\
	\widetilde{\Theta}_{k, \mu}(\tau, w) &= \sum_{n \in L + \mu} F_{k,\tau, w}(n) = \sqrt{i(w-\tau)} \widehat{\Theta}_{k,\mu}(\tau, w).
\end{split}
\end{align}
First, we have to check the convergence of the sums. By the definition of the error function, we have
\begin{align}\label{eq:erf-integral}
   \erf \left(-i \sqrt{\pi i(w-\tau)} n \right) = -2i \sqrt{i(w-\tau)} n \int_0^1 e^{\pi i(w-\tau) n^2 t^2} dt.
\end{align}
The summand is bounded as
\begin{align*}
    \left\lvert n^k \erf \left(-i \sqrt{\pi i(w-\tau)} n \right) e^{\pi i n^2 \tau} \right\rvert \leq 2 \lvert w-\tau \rvert^{1/2} n^{k+1} e^{-\pi n^2 \min(\Im(\tau), \Im(w))}.
\end{align*}
Thus the series in \eqref{two-theta-series} are absolutely and uniformly convergent on compact subsets of $\bbH \times \bbH$.

Next, we explain the relationship between the theta functions in \eqref{two-theta-series} and false theta functions. For a fixed $\tau \in \bbH$, the branch cut defined by $\sqrt{i(w-\tau)}$ is along the half-line $\{w=\tau+iy \in \mathbb{C} \mid y>0\}$. By the fact that $\lim_{x \to \pm \infty}\erf(x)=\pm 1$, we have the following.

\begin{lem}\label{theta-hat-limit}
	For any $k \geq 0$ and $\epsilon > 0$,
	\[
		\lim_{t \to \infty} \widehat{\Theta}_{k, \mu}(\tau, \tau + it + \epsilon) = \widetilde{\theta}_{k, \mu}(\tau).
	\]
\end{lem}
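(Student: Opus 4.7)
The plan is to apply dominated convergence to the series defining $\widehat{\Theta}_{k,\mu}(\tau,w)$ along the path $w=\tau+it+\epsilon$ as $t\to\infty$. This reduces the statement to two ingredients: (a) the pointwise identification $\erf(-i\sqrt{\pi i(w-\tau)}\,n)\to\sgn(n)$ for every $n\in L+\mu$, and (b) a uniform-in-$t$ bound on the summands by a summable sequence in $n$.

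For (a), I would analyse the location of $z_n(t):=-i\sqrt{\pi i(w-\tau)}\,n$. Since $i(w-\tau)=-t+i\epsilon$ lies in the upper half-plane with argument tending to $\pi^{-}$ as $t\to\infty$, the principal branch $\arg\sqrt{\,\cdot\,}\in(-\pi/2,\pi/2]$ forces $\arg\sqrt{i(w-\tau)}\to\pi/2^{-}$; consequently $\arg(-i\sqrt{\pi i(w-\tau)})\to 0^{-}$ while its modulus grows like $\sqrt{\pi t}$. For $n>0$ the point $z_n(t)$ therefore escapes to $+\infty$ along a ray tending to the positive real axis from below, and the standard asymptotic $\erf(z)\to 1$ on closed subsectors of $|\arg z|<\pi/4$ gives $\erf(z_n(t))\to 1$; the case $n<0$ is handled by the odd symmetry $\erf(-z)=-\erf(z)$, and the $n=0$ term vanishes on both sides since either $n^k=0$ or $\erf(0)=0$.

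For (b), fix $T>0$. For $t\geq T$ one has $\arg(i(w-\tau))\leq\pi-\arctan(\epsilon/T)$, whence $\arg(-i\sqrt{\pi i(w-\tau)})$ stays in a closed subinterval $[-\pi/4+\delta_T,0]$ of $(-\pi/4,0]$. Accordingly $\arg z_n(t)$, which depends only on $\sgn(n)$, remains in a fixed closed subsector of $|\arg z|<\pi/4$ or of $|\arg(-z)|<\pi/4$, on which $\erf$ is uniformly bounded by some constant $C=C(\tau,\epsilon,T)$ (since $|e^{-s^2}|$ decays along every ray with $|\arg s|<\pi/4$, making the classical error-function asymptotic uniform on such sectors). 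Therefore
\[
    \bigl|n^k\,\erf(z_n(t))\,e^{\pi i n^2\tau}\bigr|\ \leq\ C\,|n|^k\,e^{-\pi n^2\Im(\tau)}\qquad(t\geq T),
\]
which is summable over $n\in L+\mu$. Dominated convergence then yields
\[
    \lim_{t\to\infty}\widehat{\Theta}_{k,\mu}(\tau,\tau+it+\epsilon)\ =\ \sum_{n\in L+\mu}\sgn(n)\,n^k\,q^{n^2/2}\ =\ \widetilde{\theta}_{k,\mu}(\tau).
\]

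The main obstacle is really the branch bookkeeping: one must confirm that, for the specific path $w=\tau+it+\epsilon$, the rays along which $z_n(t)$ migrates stay strictly inside the convergence sector $|\arg z|<\pi/4$ (modulo $\pi$) once $t$ is bounded below by a positive constant. This is what makes $\epsilon>0$ essential and what forces the restriction to $t\geq T>0$ in the dominating bound; once these branch considerations are settled, the rest of the argument is a routine application of the dominated convergence theorem.
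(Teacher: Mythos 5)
Your argument is correct and is essentially the paper's proof written out in full: the paper likewise establishes the lemma by combining the summand bound $\lvert n^k \erf(-i\sqrt{\pi i(w-\tau)}\,n)\, e^{\pi i n^2\tau}\rvert \leq 2\lvert w-\tau\rvert^{1/2} n^{k+1} e^{-\pi n^2\min(\Im\tau,\Im w)}$ with the observation that the branch cut of $\sqrt{i(w-\tau)}$ lies along $\{\tau+iy : y>0\}$ (hence the $\epsilon>0$) and the fact that $\erf(x)\to\pm 1$, leaving the dominated-convergence and sector bookkeeping implicit where you make them explicit. One small slip: for $t\geq T$ you have $\arg(i(w-\tau))=\pi-\arctan(\epsilon/t)\geq\pi-\arctan(\epsilon/T)$, not $\leq$; the inequality in this direction is what actually keeps $\arg z_n(t)$ in a closed subsector of $\lvert\arg z\rvert<\pi/4$ bounded away from $-\pi/4$, so your conclusion stands.
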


In this sense, the theta function $\widehat{\Theta}_{k,\mu}(\tau,w)$ is a two-variable generalization of false theta functions. For $k=0, 1$, by applying Poisson's summation formula and the Fourier transform for $F_{k,\tau,w}(x)$ to the theta function $\widetilde{\Theta}_{k,\mu}(\tau,w)$, we obtain the following $S$-transformation formulas.

\begin{prop}\label{Theta-tilde-S-Trans}
	For $k=0,1$, we have
	\begin{align*}
		\widetilde{\Theta}_{0,\mu} \left(-\frac{1}{\tau}, -\frac{1}{w} \right) &= \frac{-(-i)^{1/2} w^{-1/2}}{\sqrt{M}} \sum_{\nu \in L'/L} e^{2\pi i \mu \nu} \widetilde{\Theta}_{0, \nu}(\tau, w)
	\end{align*}
	and
	\begin{align*}
		\widetilde{\Theta}_{1, \mu} \left(-\frac{1}{\tau}, -\frac{1}{w} \right) &= \frac{(-i)^{1/2} w^{-1/2}}{\sqrt{M}} \sum_{\nu \in L'/L} e^{2\pi i \mu \nu} \left(\tau \widetilde{\Theta}_{1,\nu}(\tau, w) + \frac{w-\tau}{\pi i} \theta_{0,\nu}(w) \right).
	\end{align*}
\end{prop}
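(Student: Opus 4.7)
The plan is to derive both identities by one application of Poisson's summation formula to the definition $\widetilde{\Theta}_{k,\mu}(\tau,w)=\sum_{n\in L+\mu}F_{k,\tau,w}(n)$. Since $F_{k,\tau,w}$ carries the Gaussian decay $e^{-\pi x^{2}\Im(\tau)}$ (the other factors $x^{k}$ and $\erf(-i\sqrt{\pi i(w-\tau)}\,x)$ grow at most polynomially and are bounded, respectively), it is a Schwartz function in $x$ and Poisson applies. Evaluated at $(-1/\tau,-1/w)$ this yields
\[
\sqrt{M}\,\widetilde{\Theta}_{k,\mu}(-1/\tau,-1/w)=\sum_{n\in L'}\mathcal{F}\bigl(F_{k,-1/\tau,-1/w}\bigr)(n)\,e^{2\pi i n\mu}.
\]
I would then feed in the Fourier-transform formulas from the two preceding lemmas with $(\tau,w)$ replaced by $(-1/\tau,-1/w)$, and split the sum over the dual lattice $L'$ into cosets $L+\nu$ with $\nu\in L'/L$, using $e^{2\pi in\mu}=e^{2\pi i\nu\mu}$ whenever $n\in L+\nu$ and $\mu\in L'$.

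For $k=0$ the substitution gives $\mathcal{F}(F_{0,-1/\tau,-1/w})(x)=i^{1/2}(-1/w)^{1/2}F_{0,\tau,w}(x)$, and the coset decomposition immediately produces $\sum_{\nu}e^{2\pi i\mu\nu}\widetilde{\Theta}_{0,\nu}(\tau,w)$; collapsing the prefactor via the branch identity $(-1/w)^{1/2}=iw^{-1/2}$ for $w\in\bbH$ (valid under the convention $\arg\sqrt{z}\in(-\pi/2,\pi/2]$) gives $i^{3/2}w^{-1/2}=-(-i)^{1/2}w^{-1/2}$, matching the claimed constant. For $k=1$ the same substitution in the second Fourier lemma (after simplifying $(-1/\tau)^{-1}=-\tau$ and $[(-1/w)-(-1/\tau)]/[(-1/\tau)(-1/w)]=w-\tau$) yields two pieces: one proportional to $F_{1,\tau,w}(x)$, which after coset decomposition reassembles into $\tau\,\widetilde{\Theta}_{1,\nu}(\tau,w)$, and one proportional to $e^{\pi ix^{2}w}$, whose sum over $L'$ becomes $\sum_{\nu}e^{2\pi i\mu\nu}\theta_{0,\nu}(w)$ directly from the definition of $\theta_{0,\nu}$. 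Factoring out the common constant $(-i)^{1/2}w^{-1/2}/\sqrt{M}$ using the phase identity $-i^{1/2}/\pi=(-i)^{1/2}/(\pi i)$ then produces the formula as stated.

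The only real obstacle is careful bookkeeping of square-root branches: the factor $\sqrt{i(w-\tau)}$ implicit in $F_{k,\tau,w}$, the Jacobian factor $w^{1/2}$ coming out of the Fourier transform, and the expression $(-1/w)^{1/2}$ each pick up half-integer phases under $w\mapsto -1/w$, and the cumulative phase has to be tracked in the prescribed branch in order to match the constants $-(-i)^{1/2}$ and $(-i)^{1/2}$ appearing in the statement. No deeper analytic input beyond the two Fourier-transform lemmas and Poisson summation is needed.
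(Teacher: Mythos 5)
Your proposal is correct and is exactly the paper's argument: the paper proves this proposition precisely by applying Poisson summation to $\widetilde{\Theta}_{k,\mu}(-1/\tau,-1/w)=\sum_{n\in L+\mu}F_{k,-1/\tau,-1/w}(n)$, substituting the two Fourier-transform lemmas, and splitting $L'$ into cosets, with the same branch bookkeeping $(-1/w)^{1/2}=iw^{-1/2}$ and $i^{3/2}=-(-i)^{1/2}$. The only quibble is that the factor $\erf(-i\sqrt{\pi i(w-\tau)}\,x)$ is not literally bounded in $x$ when $\Im(w)<\Im(\tau)$, but the product is still Schwartz by the estimate $\lvert F_{k,\tau,w}(x)\rvert\ll \lvert x\rvert^{k+1}e^{-\pi x^{2}\min(\Im(\tau),\Im(w))}$ that the paper establishes right after \eqref{two-theta-series}, so Poisson summation applies as you claim.
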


For $k=0$, taking the limit $w \to i\infty$ along $w = \tau+it+\epsilon$ ($\epsilon > 0$) gives the $S$-transformation of $\widetilde{\theta}_{0,\mu}(\tau)$ as explained in the next subsection. On the other hand, for $k=1$, the term $w - \tau$ diverges in a similar limit. In order to establish a suitable $S$-transformation formula one should take care of possible divergences.

For the convenience of later use, let us show an integral expression of the false theta function.

\begin{lem}\label{integrate-branch}
	We assume that $\Re(\tau) \neq \Re(w)$. For $k \geq 0$, we have
	\[
		\widehat{\Theta}_{k,\mu}(\tau,w) = -i \sgn(\Re(w-\tau)) \int_\tau^w \frac{\theta_{k+1, \mu}(z)}{\sqrt{-i(z-\tau)}} dz,
	\]
	where the integration path avoids the branch cut defined by $\sqrt{-i(z-\tau)}$, that is, $\{z = \tau-iy \in \mathbb{C} \mid y >0\}$.
\end{lem}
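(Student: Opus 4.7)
The plan is to expand $\erf$ via the integral representation~\eqref{eq:erf-integral}, which substituted into the definition of $\widehat{\Theta}_{k,\mu}$ gives
\[
\widehat{\Theta}_{k,\mu}(\tau,w) = -2i\sqrt{i(w-\tau)} \sum_{n \in L+\mu} n^{k+1} \int_0^1 e^{\pi i n^2(\tau+(w-\tau)t^2)}\, dt.
\]
Since $\Im(\tau+(w-\tau)t^2) = \Im(\tau) + t^2\Im(w-\tau) \geq \min(\Im\tau,\Im w) > 0$ uniformly for $t\in[0,1]$, the summand is dominated by $|n|^{k+1}e^{-\pi\min(\Im\tau,\Im w) n^2}$, so Fubini lets me swap sum and integral; the inner sum then becomes $\theta_{k+1,\mu}(\tau+(w-\tau)t^2)$.

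Next I would change variables by $z = \tau+(w-\tau)t^2$ along the straight segment from $\tau$ to $w$. This segment lies in $\bbH$ and, thanks to the hypothesis $\Re(\tau)\neq\Re(w)$, avoids the downward branch cut $\{\tau-iy:y>0\}$ of $\sqrt{-i(z-\tau)}$. Along the segment $\arg(z-\tau)=\arg(w-\tau)$, so with the convention $\arg\sqrt{\cdot}\in(-\pi/2,\pi/2]$ one has $t=(z-\tau)^{1/2}/(w-\tau)^{1/2}$ and $dz=2(w-\tau) t\, dt$, yielding
\[
\widehat{\Theta}_{k,\mu}(\tau,w) = -i\sqrt{i(w-\tau)} \int_\tau^w \frac{\theta_{k+1,\mu}(z)}{(w-\tau)^{1/2}(z-\tau)^{1/2}}\, dz.
\]

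The main (and only delicate) step left is the branch identity
\[
\frac{\sqrt{i(w-\tau)}}{(w-\tau)^{1/2}(z-\tau)^{1/2}} = \frac{\sgn(\Re(w-\tau))}{\sqrt{-i(z-\tau)}}
\]
on the segment. Both sides have modulus $|z-\tau|^{-1/2}$, so their ratio is a continuous unimodular function of $z$ (for $z\neq\tau$ on the segment), hence constant in $z$. Writing $\arg(w-\tau)=\theta\in(-\pi,\pi]$ and tracking the principal-branch arguments of $i(w-\tau)$ and $-i(z-\tau)$ separately in the three cases $\theta\in(-\pi,-\pi/2]$, $(-\pi/2,\pi/2]$, and $(\pi/2,\pi]$ identifies this constant as $\sgn(\Re(w-\tau))$. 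This branch bookkeeping is the only real obstacle: the principal-branch square root is not multiplicative, so one cannot simply combine $(w-\tau)^{1/2}(z-\tau)^{1/2}$ into a single square root, and the sign has to be chased case by case. Inserting this identity back into the integral yields the lemma.
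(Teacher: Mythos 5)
Your argument is correct and follows essentially the same route as the paper's proof: expand $\erf$ via \eqref{eq:erf-integral}, interchange sum and integral, substitute $z=\tau+(w-\tau)t^2$, and settle the resulting branch ambiguity by a case analysis on $\arg(w-\tau)$. The only cosmetic difference is that the paper first collapses the prefactor and the Jacobian into the single expression $\int_\tau^w \theta_{k+1,\mu}(z)/\sqrt{i(z-\tau)}\,dz$ and then applies the identity $\sqrt{i(z-\tau)}=i\,\sgn(\Re(w-\tau))\sqrt{-i(z-\tau)}$ on the segment, whereas you keep $(w-\tau)^{1/2}(z-\tau)^{1/2}$ separate; both reduce to the same bookkeeping (and note that your ``unimodular and continuous, hence constant'' remark is not a valid inference on its own, but the argument-tracking you describe establishes the constant directly, since $\arg(z-\tau)$ is fixed along the segment).
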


\begin{proof}
    By \eqref{eq:erf-integral}, we have
    \begin{align*}
		\widehat{\Theta}_{k,\mu}(\tau, w) = -2i \sqrt{i(w-\tau)} \int_0^1 \sum_{n \in L + \mu} n^{k+1} e^{\pi i (w-\tau)n^2t^2 + \pi i n^2 \tau} dt.
	\end{align*}
	Changing variable via $(w-\tau)t^2 + \tau = z$ yields the integral expression
	\begin{align}\label{eq:hatTheta-int}
		\widehat{\Theta}_{k,\mu}(\tau,w) &= \int_\tau^w \frac{\theta_{k+1, \mu}(z)}{\sqrt{i(z-\tau)}} dz = -i \sgn(\Re(w-\tau)) \int_\tau^w \frac{\theta_{k+1, \mu}(z)}{\sqrt{-i(z-\tau)}} dz.
	\end{align}
	Here we use that $\sqrt{i(z-\tau)} = i \sgn(\Re(w-\tau)) \sqrt{-i(z-\tau)}$ holds for $z \in \bbH$ on the line segment connecting $\tau$ and $w$.
\end{proof}

From \cref{theta-hat-limit}, we also have the expression
\begin{align}\label{false-theta-integral-exp}
	\widetilde{\theta}_{k, \mu}(\tau) = -i \int_\tau^{i\infty} \frac{\theta_{k+1,\mu}(z)}{\sqrt{-i(z-\tau)}} dz,
\end{align}
where we note that $\theta_{k+1, \mu}(z)$ decays exponentially as $z \to i\infty$.

\begin{rem}
    Although changing the sign of $i$ in the denominator in \eqref{eq:hatTheta-int} seems to be a minor change, the expression given in \eqref{false-theta-integral-exp} might be better because we do not need to care about the sign of $\epsilon$ in the path of integration.
\end{rem}

\subsection{False theta functions}
\label{sec:false-S-trans}

As a corollary of \cref{Theta-tilde-S-Trans}, we show how to derive the $S$-transformation of $\widetilde{\theta}_{0,\mu}(\tau)$ given in \eqref{eq:BN-Strans}.

\begin{lem}\label{false-theta-k0}
	We assume that $\Re(\tau) \neq 0$. The false theta function $\widetilde{\theta}_{0,\mu}(\tau)$ satisfies the following $S$-transformation.
	\begin{align*}
		&\widetilde{\theta}_{0, \mu} \left(-\frac{1}{\tau} \right) + \sgn(\Re(\tau)) \frac{(-i\tau)^{1/2}}{\sqrt{M}} \sum_{\nu \in L'/L} e^{2\pi i \mu \nu} \widetilde{\theta}_{0,\nu}(\tau)\\
		&=  - i \int_0^{i\infty} \frac{\theta_{1,\mu}(z)}{\sqrt{-i(z+1/\tau)}} dz.
	\end{align*}
\end{lem}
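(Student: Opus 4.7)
I will derive the stated $S$-transformation of $\widetilde{\theta}_{0,\mu}$ as the limit $w\to i\infty$ of the bivariate identity in \cref{Theta-tilde-S-Trans}, combined with the integral representations of \cref{integrate-branch} and \eqref{false-theta-integral-exp}. The path of choice is $w = \tau + \epsilon + it$ with fixed small $\epsilon > 0$ and $t \to \infty$, which ensures that the sign factor in \cref{integrate-branch} stays constant and that the various square roots remain on a single branch throughout the limit.

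On the right-hand side of
\[
\widetilde{\Theta}_{0,\mu}\!\left(-\tfrac{1}{\tau},-\tfrac{1}{w}\right)=\frac{-(-i)^{1/2}w^{-1/2}}{\sqrt{M}}\sum_\nu e^{2\pi i\mu\nu}\widetilde{\Theta}_{0,\nu}(\tau,w),
\]
\cref{theta-hat-limit} gives $\widehat{\Theta}_{0,\nu}(\tau,w)\to\widetilde{\theta}_{0,\nu}(\tau)$, and a short branch computation using the convention $\arg\sqrt{z}\in(-\pi/2,\pi/2]$ yields $w^{-1/2}\sqrt{i(w-\tau)}\to e^{i\pi/4}$, so that the right side tends to $-\frac{1}{\sqrt{M}}\sum_\nu e^{2\pi i\mu\nu}\widetilde{\theta}_{0,\nu}(\tau)$. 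On the left side, $-1/w\to 0$ from within $\bbH$, and $\Re(-1/w+1/\tau)\to\Re(\tau)/|\tau|^2$ has sign $\sgn(\Re\tau)$, so the integral expression of \cref{integrate-branch} shows that the left side converges to $-i\,\sgn(\Re\tau)\,\sqrt{i/\tau}\int_{-1/\tau}^{0}\theta_{1,\mu}(z)/\sqrt{-i(z+1/\tau)}\,dz$. Equating these two limits and using the branch identity $1/(i\sqrt{i/\tau})=-i(-i\tau)^{1/2}$, valid for $\tau\in\bbH$ with $\Re\tau\neq 0$, produces
\[
\int_{-1/\tau}^{0}\frac{\theta_{1,\mu}(z)}{\sqrt{-i(z+1/\tau)}}\,dz=\frac{-i\,\sgn(\Re\tau)(-i\tau)^{1/2}}{\sqrt{M}}\sum_\nu e^{2\pi i\mu\nu}\widetilde{\theta}_{0,\nu}(\tau).
\]

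Finally, by \eqref{false-theta-integral-exp} with $\tau$ replaced by $-1/\tau$, split the integration at $z=0$:
\[
\widetilde{\theta}_{0,\mu}(-1/\tau)=-i\int_{-1/\tau}^{0}\frac{\theta_{1,\mu}(z)\,dz}{\sqrt{-i(z+1/\tau)}}-i\int_{0}^{i\infty}\frac{\theta_{1,\mu}(z)\,dz}{\sqrt{-i(z+1/\tau)}}.
\]
The second piece is exactly the right-hand side of the lemma, while substituting the evaluation of the first piece obtained above and rearranging gives precisely the stated identity. The main obstacle is the branch-cut bookkeeping in the limit, particularly the identity $1/(i\sqrt{i/\tau})=-i(-i\tau)^{1/2}$ and the sign analysis of $w^{-1/2}\sqrt{i(w-\tau)}$ as $w\to i\infty$; the hypothesis $\Re\tau\neq 0$ is exactly what ensures these square roots avoid the branch cut and that the factor $\sgn(\Re\tau)$ appearing in the statement is well-defined.
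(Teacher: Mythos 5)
Your proof is correct and follows essentially the same route as the paper: take the limit $w \to i\infty$ in \cref{Theta-tilde-S-Trans}, identify the limit of the left side via \cref{integrate-branch} as the integral from $-1/\tau$ to $0$, and then use the integral representation \eqref{false-theta-integral-exp} of $\widetilde{\theta}_{0,\mu}(-1/\tau)$ split at $z=0$ (the paper phrases this as adding the integral over $[-1/\tau, i\infty)$ to both sides, which is the same manipulation). The branch-cut bookkeeping you carry out matches the constants in the paper's equation \eqref{Theta-hat-0}.
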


\begin{proof}
	For the first equation in \cref{Theta-tilde-S-Trans}, by taking the limit as $w \to i\infty$ along $w = \tau + it + \epsilon$ ($\epsilon > 0$), we have
	\begin{align}\label{Theta-hat-0}
		\lim_{t \to \infty} \widehat{\Theta}_{0, \mu} \left(-\frac{1}{\tau}, -\frac{1}{\tau+it+\epsilon} \right) = \frac{-(-i\tau)^{1/2}}{\sqrt{M}} \sum_{\nu \in L'/L} e^{2\pi i \mu \nu} \widetilde{\theta}_{0,\nu}(\tau),
	\end{align}
	where we recall the relation $\widetilde{\Theta}_{0, \mu}(\tau,w) = \sqrt{i(w-\tau)} \widehat{\Theta}_{0,\mu}(\tau, w)$.
	Since $\theta_{1,\mu}(z)$ decays exponentially as $z \to 0$ by \cref{theta-01-S-trans}, the limit converges to
	\[
		\widehat{\Theta}_{0, \mu} \left(-\frac{1}{\tau}, 0 \right) = i \sgn(\Re(\tau)) \int_0^{-\frac{1}{\tau}} \frac{\theta_{1,\mu}(z)}{\sqrt{-i(z+1/\tau)}} dz
	\]
	by \cref{integrate-branch}. By adding
	\begin{align*}
		i \sgn(\Re(\tau)) \int_{-\frac{1}{\tau}}^{i\infty} \frac{\theta_{1,\mu}(z)}{\sqrt{-i(z+1/\tau)}} dz = -\sgn(\Re(\tau)) \widetilde{\theta}_{0, \mu} \left(-\frac{1}{\tau} \right)
	\end{align*}
	to both sides of \eqref{Theta-hat-0}, we obtain
	\begin{align*}
		&i \sgn(\Re(\tau)) \int_0^{i\infty} \frac{\theta_{1,\mu}(z)}{\sqrt{-i(z+1/\tau)}} dz\\
		&= \frac{-(-i\tau)^{1/2}}{\sqrt{M}} \sum_{\nu \in L'/L} e^{2\pi i \mu \nu} \widetilde{\theta}_{0,\nu}(\tau) -\sgn(\Re(\tau)) \widetilde{\theta}_{0, \mu} \left(-\frac{1}{\tau} \right),
	\end{align*}
	which concludes the proof.
\end{proof}

Since
\[
	\frac{d}{d\tau} \widetilde{\theta}_{k, \mu}(\tau) = \pi i \widetilde{\theta}_{k+2,\mu}(\tau)
\]
holds, it is essentially enough to obtain the $S$-transformations for general $\widetilde{\theta}_{k,\mu}(\tau)$ that we establish that for $k=0$ and $k = 1$. While the case of $k=1$ requires careful treatment of convergence in the limit for $w$, we will simply avoid this problem by considering a suitable linear combination of false theta functions. 

We let $k \geq 0$ and $d \geq 2$ be integers. For any $\bm{\mu} = (\mu_j)_{1 \leq j \leq d} \in {L'}^d$ and $\bm{a} = (a_j)_{1 \leq j \leq d} \in \mathbb{C}^d$ satisfying $\sum_{j=1}^d a_j = 0$, we put
\begin{align}\label{mu-a}
	\widetilde{\theta}_{k, \bm{\mu}, \bm{a}}(\tau) = \sum_{j=1}^d a_j \widetilde{\theta}_{k, \mu_j}(\tau), \quad \theta_{k, \bm{\mu}, \bm{a}} (\tau) = \sum_{j=1}^d a_j \theta_{k, \mu_j}(\tau).
\end{align}
A simple observation reveals that $\widetilde{\theta}_{k, -\mu}(\tau) = (-1)^{k+1} \widetilde{\theta}_{k, \mu}(\tau)$ and $\theta_{k, -\mu}(\tau) = (-1)^k \theta_{k, \mu}(\tau)$ hold. Thus, if $k$ is even, a single false theta function $\widetilde{\theta}_{k, \mu}(\tau)$ is expressed by the above expression as $2\widetilde{\theta}_{k,\mu}(\tau) = \widetilde{\theta}_{k, \mu}(\tau) - \widetilde{\theta}_{k, -\mu}(\tau)$.
The linear combination satisfies the following $S$-transformation.

\begin{lem}\label{false-theta-k1}
	The notations are the same as above and we assume that $\Re(\tau) \neq 0$. Then, we have
	\begin{align*}
		&\widetilde{\theta}_{1, \bm{\mu}, \bm{a}} \left(-\frac{1}{\tau}\right) + \sgn(\Re(\tau)) \frac{-i (-i\tau)^{3/2}}{\sqrt{M}} \sum_{\nu \in L'/L} \left(\sum_{j=1}^d a_j e^{2\pi i \mu_j \nu} \right) \widetilde{\theta}_{1,\nu}(\tau)\\
		&=-i \int_0^{i\infty} \frac{\theta_{2, \bm{\mu}, \bm{a}}(z)}{\sqrt{-i(z+1/\tau)}} dz.
	\end{align*}
\end{lem}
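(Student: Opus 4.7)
The plan is to mimic the proof of \cref{false-theta-k0} by starting from the $k=1$ case of \cref{Theta-tilde-S-Trans} and letting $w\to i\infty$ along $w=\tau+it+\epsilon$ with $\epsilon>0$. The only substantial new ingredient is the linear combination $\bm{a}$: the extra summand $\frac{w-\tau}{\pi i}\theta_{0,\nu}(w)$, which would diverge in this limit, is killed by the hypothesis $\sum_{j=1}^d a_j=0$.

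First, I would substitute $\mu=\mu_j$ into the $k=1$ identity of \cref{Theta-tilde-S-Trans}, multiply by $a_j$, sum over $j$, and interchange the sums to obtain
\begin{align*}
    \widetilde{\Theta}_{1,\bm{\mu},\bm{a}}\!\left(-\tfrac{1}{\tau},-\tfrac{1}{w}\right)&=\frac{(-i)^{1/2}w^{-1/2}}{\sqrt{M}}\sum_{\nu\in L'/L}\Bigl(\sum_{j=1}^d a_je^{2\pi i\mu_j\nu}\Bigr)\\
    &\quad\times\Bigl(\tau\widetilde{\Theta}_{1,\nu}(\tau,w)+\tfrac{w-\tau}{\pi i}\theta_{0,\nu}(w)\Bigr).
\end{align*}
In the limit $w\to i\infty$, the $\theta_{0,\nu}(w)$ summand disappears: for $\nu=0$ the inner coefficient $\sum_j a_j$ vanishes, removing the only non-decaying piece (the $n=0$ contribution to $\theta_{0,0}$), while for $\nu\neq 0$ the exponential decay of $\theta_{0,\nu}(w)$ dominates the linear factor $w-\tau$ (even after including the $w^{-1/2}$ and the further $\sqrt{-\tau w}$ factor introduced below).

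Next, I would convert both sides to the $\widehat{\Theta}_{1,\bullet}$ normalization via $\widetilde{\Theta}_{1,\bullet}=\sqrt{i(w-\tau)}\,\widehat{\Theta}_{1,\bullet}$ and divide through by $\sqrt{i(-1/w+1/\tau)}$; the resulting ratio $\sqrt{i(w-\tau)}/\sqrt{i(-1/w+1/\tau)}=\sqrt{-\tau w}$, combined with $(-i)^{1/2}w^{-1/2}$, tends to $(-i\tau)^{1/2}$ as $w\to i\infty$, exactly as in the $k=0$ derivation. Using \cref{theta-hat-limit} and carrying along the extra factor $\tau$ from $\tau\widetilde{\Theta}_{1,\nu}$ yields the limiting identity
\[
    \widehat{\Theta}_{1,\bm{\mu},\bm{a}}\!\left(-\tfrac{1}{\tau},0\right)=\frac{\tau(-i\tau)^{1/2}}{\sqrt{M}}\sum_{\nu\in L'/L}\Bigl(\sum_{j=1}^d a_je^{2\pi i\mu_j\nu}\Bigr)\widetilde{\theta}_{1,\nu}(\tau).
\]
Applying \cref{integrate-branch} with $k=1$ rewrites the left-hand side as $i\sgn(\Re(\tau))\int_0^{-1/\tau}\theta_{2,\bm{\mu},\bm{a}}(z)/\sqrt{-i(z+1/\tau)}\,dz$; adding $i\sgn(\Re(\tau))\int_{-1/\tau}^{i\infty}\theta_{2,\bm{\mu},\bm{a}}(z)/\sqrt{-i(z+1/\tau)}\,dz=-\sgn(\Re(\tau))\widetilde{\theta}_{1,\bm{\mu},\bm{a}}(-1/\tau)$, via \eqref{false-theta-integral-exp}, to both sides merges the two integrals into $-i\int_0^{i\infty}$. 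Using the identity $\tau(-i\tau)^{1/2}=i(-i\tau)^{3/2}$ and multiplying through by $-1$ then gives the claimed formula. Convergence of the final integral near $z=0$ follows because the hypothesis $\sum_j a_j=0$, combined with \cref{theta-S-transform} (the case $\kappa=1$, $\iota=0$), forces $\theta_{2,\bm{\mu},\bm{a}}(z)\to 0$ as $z\to 0$.

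The main obstacle is the branch bookkeeping in the reduction to $\widehat{\Theta}_{1,\bullet}$: one has to verify $\sqrt{i(w-\tau)}/\sqrt{i(-1/w+1/\tau)}=\sqrt{-\tau w}$ along the ray $w=\tau+it+\epsilon$ under the chosen branch $\arg\sqrt{z}\in(-\pi/2,\pi/2]$ and confirm $(-i)^{1/2}w^{-1/2}\sqrt{-\tau w}\to(-i\tau)^{1/2}$ with the correct sign. This is the same computation as the one implicit in the proof of \cref{false-theta-k0}; the additional factor $\tau$ then produces the prefactor $\tau(-i\tau)^{1/2}=i(-i\tau)^{3/2}$, and the sign $\sgn(\Re(\tau))$ in the final statement enters only through \cref{integrate-branch}.
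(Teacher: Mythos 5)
Your proposal is correct and follows essentially the same route as the paper's proof: apply \cref{Theta-tilde-S-Trans} for $k=1$ to the linear combination, use $\sum_j a_j=0$ to kill the would-be divergent $\theta_{0,0}(w)$ term and to guarantee the decay of $\theta_{2,\bm{\mu},\bm{a}}(z)$ as $z\to 0$, pass to the limit $w\to i\infty$, and then combine \cref{integrate-branch} with \eqref{false-theta-integral-exp} to assemble the integral $-i\int_0^{i\infty}$. The only cosmetic difference is that the paper rewrites the left-hand side as an integral via \cref{integrate-branch} before taking the limit, whereas you take the limit first; the branch bookkeeping you flag is exactly the computation already implicit in \cref{false-theta-k0}.
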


\begin{proof}
	By \cref{Theta-tilde-S-Trans} and \cref{integrate-branch}, we have
	\begin{align*}
		&-i \sgn \left(\Re \left(-\frac{1}{w} + \frac{1}{\tau} \right) \right) \int_{-\frac{1}{\tau}}^{-\frac{1}{w}} \frac{\theta_{2, \bm{\mu}, \bm{a}}(z)}{\sqrt{-i(z+1/\tau)}} dz\\
		&= \frac{(-i)^{1/2}}{\sqrt{M}} \frac{w^{-1/2}}{\sqrt{i(-1/w+1/\tau)}}\\
		&\qquad \times \sum_{\nu \in L'/L} \left(\sum_{j=1}^d a_j e^{2\pi i \mu_j \nu} \right) \left(\tau \sqrt{i(w-\tau)} \widehat{\Theta}_{1,\nu}(\tau, w) + \frac{w-\tau}{\pi i} \theta_{0,\nu}(w) \right).
	\end{align*}
	The left-hand side converges as $w \to i\infty$ because the theta function $\theta_{2, \bm{\mu}, \bm{a}}(z)$ exponentially decays as $z \to 0$ by \cref{theta-S-transform}. More precisely, the theta function $\theta_{2,\mu}(z)$ satisfies
	\begin{align*}
		\theta_{2,\mu} \left(-\frac{1}{z}\right) = \frac{(-i)^{1/2} z^{3/2}}{2\pi i \sqrt{M}} \sum_{\nu \in L'/L} e^{2\pi i \mu \nu} \bigg(\theta_{0,\nu}(z) + 2\pi i z \theta_{2,\nu}(z) \bigg).
	\end{align*}
	Since the term $z^{3/2} \theta_{0,0}(z)$ on the right-hand side has a polynomial growth as $z \to i\infty$, the single $\theta_{2,\mu}(z)$ diverges as $z \to 0$. However, this growth is cancelled out by taking the linear combination of theta functions.
	
	Similarly, the right-hand side also converges. In fact, $\theta_{0, \nu}(w)$ for $\nu \neq 0$ decays exponentially as $w \to i\infty$ and $\theta_{0,0}(w)$ vanishes by the term $\sum_{j=1}^d a_j e^{2\pi i \mu_j \nu}$ for $\nu = 0$. Therefore, we obtain
	\begin{align*}
		\sgn(\Re(\tau)) \int_0^{-\frac{1}{\tau}} \frac{\theta_{2, \bm{\mu}, \bm{a}}(z)}{\sqrt{-i(z+1/\tau)}} dz = \frac{(-i\tau)^{3/2}}{\sqrt{M}} \sum_{\nu \in L'/L} \left(\sum_{j=1}^d a_j e^{2\pi i \mu_j \nu} \right) \widetilde{\theta}_{1,\nu}(\tau).
	\end{align*}
	Since each function $\theta_{2,\mu_j}(z)$ has an exponential decay at infinity, we can add 
	\begin{align*}
		\sgn(\Re(\tau)) \int_{-\frac{1}{\tau}}^{i\infty} \frac{\theta_{2, \bm{\mu}, \bm{a}}(z)}{\sqrt{-i(z+1/\tau)}} dz = i \sgn(\Re(\tau)) \widetilde{\theta}_{1, \bm{\mu}, \bm{a}} \left(-\frac{1}{\tau}\right),
	\end{align*}
	to both sides. Thus we get
	\begin{align*}
		&\sgn(\Re(\tau)) \int_0^{i\infty} \frac{\theta_{2, \bm{\mu}, \bm{a}}(z)}{\sqrt{-i(z+1/\tau)}} dz\\
		&= \frac{(-i\tau)^{3/2}}{\sqrt{M}} \sum_{\nu \in L'/L} \left(\sum_{j=1}^d a_j e^{2\pi i \mu_j \nu} \right) \widetilde{\theta}_{1,\nu}(\tau) + i \sgn(\Re(\tau)) \widetilde{\theta}_{1, \bm{\mu}, \bm{a}} \left(-\frac{1}{\tau}\right),
	\end{align*}
	which gives the desired equation.
\end{proof}

Combining \cref{false-theta-k0} and \cref{false-theta-k1}, for $k=0,1$,
\begin{align}\label{false-theta-01}
	&\widetilde{\theta}_{k, \bm{\mu}, \bm{a}} \left(-\frac{1}{\tau} \right) + \sgn(\Re(\tau)) \frac{(-1)^k (-i)^{1/2} \tau^{k+\frac{1}{2}}}{\sqrt{M}} \sum_{\nu \in L'/L} \left(\sum_{j=1}^d a_j e^{2\pi i \mu_j \nu} \right) \widetilde{\theta}_{k,\nu}(\tau) \nonumber\\
		&= - i \int_0^{i\infty} \frac{\theta_{k+1,\bm{\mu}, \bm{a}}(z)}{\sqrt{-i(z+1/\tau)}} dz.
\end{align}
By taking the derivatives in $\tau$ repeatedly, we obtain the following general $S$-transformation formulas.

\begin{prop}\label{false-theta-general-k}
	The notations are the same as before.
	For $\Re(\tau) \neq 0$ and $k = 2\kappa + \iota$ with $\kappa \in \mathbb{Z}_{\geq 0}$ and $\iota \in \{0,1\}$, we have
	\begin{align*}
		&\widetilde{\theta}_{k, \bm{\mu}, \bm{a}} \left(-\frac{1}{\tau} \right)\\
		&+ \sgn(\Re(\tau)) \frac{(-1)^\iota (-i)^{1/2} \tau^{\kappa+\iota+\frac{1}{2}}}{(2\pi i)^\kappa \sqrt{M}} \sum_{\nu \in L'/L} \left(\sum_{j=1}^d a_j e^{2\pi i \mu_j \nu} \right)\sum_{r=0}^\kappa d_{\kappa,\iota,r} \tau^r \widetilde{\theta}_{2r+\iota,\nu}(\tau)\\
		&= - i \int_0^{i\infty} \frac{\theta_{k+1, \bm{\mu}, \bm{a}}(z)}{\sqrt{-i(z+1/\tau)}} dz,
	\end{align*}
	where $d_{\kappa,\iota,r}$ is given in \cref{theta-S-transform}.
\end{prop}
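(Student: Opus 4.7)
The plan is to proceed by induction on $\kappa$, with $\iota \in \{0,1\}$ held fixed. The base case $\kappa = 0$, covering $k = 0$ and $k = 1$, is precisely the unified identity \eqref{false-theta-01} established through \cref{false-theta-k0} and \cref{false-theta-k1}. For the inductive step I would assume the claim for $k = 2\kappa + \iota$, differentiate both sides in $\tau$, multiply through by $\tau^{2}/(\pi i)$, and check that the result is exactly the proposed identity for $k+2 = 2(\kappa+1) + \iota$. The essential input driving the induction is the heat-type identity $\frac{d}{d\tau}\widetilde{\theta}_{k,\mu}(\tau) = \pi i\,\widetilde{\theta}_{k+2,\mu}(\tau)$ (and its analogue for the ordinary $\theta_{k,\mu}$), which is valid because $\sgn(n)$ does not depend on $\tau$.

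On the left-hand side, differentiation of the first term produces $\frac{\pi i}{\tau^{2}}\widetilde{\theta}_{k+2,\bm{\mu},\bm{a}}(-1/\tau)$, which after multiplying by $\tau^{2}/(\pi i)$ becomes $\widetilde{\theta}_{k+2,\bm{\mu},\bm{a}}(-1/\tau)$. For the sum the product rule gives
\[
    \frac{d}{d\tau}\bigl[\tau^{\kappa+\iota+\frac{1}{2}+r}\widetilde{\theta}_{2r+\iota,\nu}(\tau)\bigr]
    = (\kappa+\iota+\tfrac{1}{2}+r)\,\tau^{\kappa+\iota-\frac{1}{2}+r}\widetilde{\theta}_{2r+\iota,\nu}(\tau) + \pi i\,\tau^{\kappa+\iota+\frac{1}{2}+r}\widetilde{\theta}_{2r+2+\iota,\nu}(\tau),
\]
and after the normalization $\tau^{2}/(\pi i)$ the prefactor $C_{\kappa}=\sgn(\Re(\tau))\frac{(-1)^{\iota}(-i)^{1/2}}{(2\pi i)^{\kappa}\sqrt{M}}$ becomes $2C_{\kappa+1}$. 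Re-indexing the two pieces by $r'=r$ and $r'=r+1$ and collecting, the coefficient of $\tau^{(\kappa+1)+\iota+\frac{1}{2}+r'}\widetilde{\theta}_{2r'+\iota,\nu}(\tau)$ reads $2\bigl((\kappa+\iota+r'+\frac{1}{2})d_{\kappa,\iota,r'} + \pi i\,d_{\kappa,\iota,r'-1}\bigr)$, which by the recursion \eqref{d-recursion} is exactly $d_{\kappa+1,\iota,r'}$; this is the same algebraic matching already carried out in the proof of \cref{theta-S-transform}. Because $\sgn(\Re(\tau))$ is locally constant on the set $\Re(\tau)\neq 0$, no extra terms arise from it.

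On the right-hand side, a direct computation yields the pivotal identity
\[
    \frac{\partial}{\partial\tau}\frac{1}{\sqrt{-i(z+1/\tau)}} = -\frac{1}{\tau^{2}}\,\frac{\partial}{\partial z}\frac{1}{\sqrt{-i(z+1/\tau)}},
\]
so that differentiating the integrand in $\tau$ reduces to a $z$-derivative. Integration by parts in $z$ transfers the derivative onto $\theta_{k+1,\bm{\mu},\bm{a}}$, producing $\pi i\,\theta_{k+3,\bm{\mu},\bm{a}}$, and after multiplying by $\tau^{2}/(\pi i)$ one recovers $-i\int_{0}^{i\infty}\theta_{k+3,\bm{\mu},\bm{a}}(z)/\sqrt{-i(z+1/\tau)}\,dz$, as required for the $k+2$ case.

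The main technical obstacle will be controlling the boundary terms in the integration by parts. At $z=i\infty$ each ordinary theta function decays exponentially, but near $z=0$ an individual $\theta_{k+1,\mu}(z)$ can grow polynomially through the $\theta_{0,0}$ piece produced by the $S$-transformation in \cref{theta-S-transform}. This growth is cancelled precisely by the hypothesis $\sum_{j}a_{j}=0$, so that $\theta_{k+1,\bm{\mu},\bm{a}}(z)$ decays exponentially both at $0$ and at $i\infty$ and the boundary terms vanish; this is the same cancellation exploited in the proof of \cref{false-theta-k1} and can be imported here without modification.
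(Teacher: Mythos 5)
Your proposal is correct and follows essentially the same route as the paper: induction on $\kappa$ with base case \eqref{false-theta-01}, differentiation in $\tau$ using $\frac{d}{d\tau}\widetilde{\theta}_{k,\mu} = \pi i\,\widetilde{\theta}_{k+2,\mu}$, the recursion \eqref{d-recursion} on the left, and integration by parts on the right with boundary terms killed by the exponential decay of $\theta_{k+1,\bm{\mu},\bm{a}}$ at both $0$ and $i\infty$. Your explicit remark that the decay at $z \to 0$ hinges on the cancellation forced by $\sum_j a_j = 0$ is a correct (and slightly more detailed) account of the point the paper dispatches by citing \cref{theta-S-transform}.
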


\begin{proof}
	The idea is almost the same as that for \cref{theta-S-transform}. The initial case of $\kappa=0$ holds by \eqref{false-theta-01}. We assume that the desired $S$-transformation for $k=2\kappa+\iota$ holds. The derivative in $\tau$ of the left-hand side is given by
	\begin{align*}
		&\frac{\pi i}{\tau^2} \bigg(\widetilde{\theta}_{k+2, \bm{\mu}, \bm{a}} \left(-\frac{1}{\tau} \right) + \sgn(\Re(\tau)) \frac{(-1)^\iota (-i)^{1/2} \tau^{(\kappa+1)+\iota+\frac{1}{2}}}{(2\pi i)^{\kappa+1} \sqrt{M}} \\
		& \times \sum_{\nu \in L'/L} \left(\sum_{j=1}^d a_j e^{2\pi i \mu_j \nu} \right) \sum_{r=0}^{\kappa+1} 2 \bigg( (\kappa+\iota+r+1/2) d_{\kappa,\iota,r} + \pi i d_{\kappa,\iota,r-1} \bigg) \tau^r \widetilde{\theta}_{2r+\iota,\nu}(\tau) \bigg).
	\end{align*}
	On the other hand, the derivative of the right-hand side equals
	\begin{align*}
		- \frac{1}{2\tau^2} \int_0^{i\infty} \frac{\theta_{k+1,\bm{\mu}, \bm{a}}(z)}{(-i(z+1/\tau))^{3/2}} dz = \frac{\pi}{\tau^2} \int_0^{i\infty} \frac{\theta_{k+3, \bm{\mu}, \bm{a}}(z)}{\sqrt{-i(z+1/\tau)}} dz
	\end{align*}
	by the partial integration. Here we recall that the integrand $\theta_{k+1,\bm{\mu}, \bm{a}}(z)$ decays exponentially as $z \to 0$ and $z \to i\infty$ by \cref{theta-S-transform}. Finally, the recursion \eqref{d-recursion} yields the claim for $k+2 = 2(\kappa+1) + \iota$.
\end{proof}

\section{Decomposition of the homological block}
\label{sec:de-ftf}

In the rest of the article, we always assume $n \geq 3$. In this section, we deform the homological block $\Phi(q)$ defined in \cref{defi:WRT-function} into a sum of false theta functions. First, we focus on a part of the homological block defined by
\begin{align*}
	\Psi(q) := \sum_{(\varepsilon_1, \dots, \varepsilon_n) \in \{\pm 1\}^n} \varepsilon_1 \cdots \varepsilon_n \sum_{m=0}^\infty {m+n-3 \choose n-3} q^{\frac{1}{4P} \left(2Pm+P \left(n-2 + \sum_{j=1}^n \frac{\varepsilon_j}{p_j}\right) \right)^2}.
\end{align*}
Since $p_j$'s are pairwise coprime, the number $\sum_{j=1}^n \varepsilon_j/p_j$ can never be an integer. For $\bm{p} = (p_1, \dots, p_n)$ and $\bm{\varepsilon} = (\varepsilon_1, \dots, \varepsilon_n)$, there uniquely exist $m_0 = m_0(\bm{p}, \bm{\varepsilon}) \in \mathbb{Z}$ and $\ell = \ell(\bm{p}, \bm{\varepsilon}) \in \{1,2,\dots,2P-1\}$ such that
\begin{align}\label{m0-l-def}
	P \left(n - 2 + \sum_{j=1}^n \frac{\varepsilon_j}{p_j} \right) = 2P m_0 + \ell.
\end{align}
We note that a direct calculation yields $m_0(\bm{p}, -\bm{\varepsilon}) = n-3-m_0(\bm{p}, \bm{\varepsilon})$ and $\ell(\bm{p}, -\bm{\varepsilon}) = 2P - \ell(\bm{p},\bm{\varepsilon})$ for $-\bm{\varepsilon} = (-\varepsilon_1, \dots, -\varepsilon_n)$. With these notations, we show two expressions of $\Psi(q)$ in \cref{Psi-expand-1} and \cref{Psi-expand-2}.

\begin{lem}\label{Psi-expand-1}
	\begin{align*}
		\Psi(q) &= \sum_{(\varepsilon_1, \dots, \varepsilon_n) \in \{\pm 1\}^n} \frac{\varepsilon_1 \cdots \varepsilon_n}{(n-3)!} \sum_{k=0}^{n-3} \left(\sum_{l=k}^{n-3} \frac{(-\ell-2Pm_0)^{l-k}}{(2P)^l} \st{n-2}{l+1} {l \choose k} \right) \sum_{\substack{m \equiv \ell\ (2P) \\ m \geq 2Pm_0+\ell}} m^k q^{\frac{m^2}{4P}},
	\end{align*}
	where $\st{\cdot}{\cdot}$ is the Stirling number of the first kind defined by $\st{0}{0} = 1, \st{n}{0} = \st{0}{k} = 0$, and the recursion $\st{n+1}{k} = \st{n}{k-1} + n \st{n}{k}$.
\end{lem}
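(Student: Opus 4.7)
The plan is to manipulate the inner sum over $m$ in two stages: first, rewrite the exponent of $q$ in the clean form $(m')^2/(4P)$ by a shift of summation index, and second, expand the binomial coefficient $\binom{m+n-3}{n-3}$ as a polynomial using Stirling numbers of the first kind, then swap the order of summation.

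For the first stage, I invoke the definition \eqref{m0-l-def} of $m_0=m_0(\bm{p},\bm{\varepsilon})$ and $\ell=\ell(\bm{p},\bm{\varepsilon})$ to rewrite
\[
\frac{1}{4P}\Bigl(2Pm + P\bigl(n-2 + \textstyle\sum_j \varepsilon_j/p_j\bigr)\Bigr)^2 = \frac{(2P(m+m_0)+\ell)^2}{4P} = \frac{(m')^2}{4P},
\]
where $m' := 2P(m+m_0)+\ell$. As $m$ runs over $\mathbb{Z}_{\geq 0}$, $m'$ runs over the arithmetic progression $\{m'\in\mathbb{Z}:m'\equiv\ell\ (2P),\ m'\geq 2Pm_0+\ell\}$, which matches the range appearing in the target expression.

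For the second stage, I first establish the polynomial identity
\[
\binom{m+n-3}{n-3} = \frac{1}{(n-3)!}\sum_{l=0}^{n-3}\st{n-2}{l+1}\, m^l.
\]
This follows from the standard rising-factorial generating function
\[
m(m+1)(m+2)\cdots(m+n-3) \;=\; \sum_{k=0}^{n-2}\st{n-2}{k}\, m^k,
\]
which itself follows by induction on $n$ from the defining recursion $\st{n+1}{k}=\st{n}{k-1}+n\st{n}{k}$, after dividing through by $m$ and using $\st{n-2}{0}=0$. Then $m=(m'-\ell-2Pm_0)/(2P)$ and the binomial theorem give
\[
m^l = \frac{1}{(2P)^l}\sum_{k=0}^l \binom{l}{k}(m')^k(-\ell-2Pm_0)^{l-k}.
\]
Substituting this into $\Psi(q)$, interchanging the order of the $l$- and $k$-summations (which is legitimate since both are finite), and finally renaming the dummy index $m'$ back to $m$ in the innermost sum yields precisely the claimed formula.

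The calculation is essentially bookkeeping: term matching between the two stated sums is straightforward once the change of variable $m\mapsto m'$ and the Stirling expansion are in hand. The only step that requires a touch of care is recognizing the expansion of $\binom{m+n-3}{n-3}$ in terms of $\st{n-2}{l+1}$ (rather than the seemingly more natural $\st{n-3}{l}$), but this drops out immediately from the "divide the $(n-2)$-fold rising factorial by $m$" trick described above. Convergence is automatic because each inner $m'$-sum is a genuine $q$-series supported on a discrete set of exponents bounded below.
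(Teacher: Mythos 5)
Your proof is correct and follows essentially the same route as the paper's: a change of summation variable putting the exponent in the form $m^2/(4P)$, the expansion $\binom{x+n-3}{n-3} = \frac{1}{(n-3)!}\sum_{l=0}^{n-3}\st{n-2}{l+1}x^l$ via the rising factorial, and a binomial-theorem expansion followed by interchanging the finite sums. The only cosmetic difference is that you perform the full substitution $m' = 2P(m+m_0)+\ell$ at the outset, whereas the paper first shifts $m \mapsto m-m_0$ and identifies $2Pm+\ell$ with the new index at the end.
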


\begin{proof}
	By the definitions of $m_0$ and $\ell$ in \eqref{m0-l-def},
	\begin{align}\label{Psi-sum-sum}
		\Psi(q) &= \sum_{(\varepsilon_1, \dots, \varepsilon_n) \in \{\pm 1\}^n} \varepsilon_1 \cdots \varepsilon_n \sum_{m=0}^\infty {m+n-3 \choose n-3} q^{\frac{(2P(m+ m_0) + \ell)^2}{4P}}\\
			&= \sum_{(\varepsilon_1, \dots, \varepsilon_n) \in \{\pm 1\}^n} \varepsilon_1 \cdots \varepsilon_n \sum_{m=m_0}^\infty {m-m_0+n-3 \choose n-3} q^{\frac{(2Pm + \ell)^2}{4P}}. \nonumber
	\end{align}
	By the well-known fact ${x+n \choose n} = \frac{1}{n!} \sum_{l=0}^n \st{n+1}{l+1} x^l$,
	the inner sum becomes
	\begin{align*}
		&\sum_{m=m_0}^\infty {m-m_0+n-3 \choose n-3} q^{\frac{(2Pm+\ell)^2}{4P}}\\
			&\quad = \frac{1}{(n-3)!} \sum_{m=m_0}^\infty \sum_{l=0}^{n-3} \st{n-2}{l+1} \left(m+\frac{\ell}{2P} - \frac{\ell}{2P} -m_0 \right)^l q^{\frac{(2Pm+\ell)^2}{4P}}\\
			&\quad = \frac{1}{(n-3)!} \sum_{l=0}^{n-3} \frac{1}{(2P)^l} \st{n-2}{l+1} \sum_{k=0}^l {l \choose k} (-\ell-2Pm_0)^{l-k} \sum_{\substack{m \equiv \ell\ (2P) \\ m \geq 2Pm_0+\ell}} m^k q^{\frac{m^2}{4P}},
	\end{align*}
	which concludes the proof.
\end{proof}

On the other hand, by replacing $\bm{\varepsilon} \mapsto -\bm{\varepsilon}$ in the sum in \eqref{Psi-sum-sum}, we obtain another expression
\begin{align*}
	\Psi(q) &= \sum_{(\varepsilon_1, \dots, \varepsilon_n) \in \{\pm 1\}^n} (-\varepsilon_1) \cdots (-\varepsilon_n) \\
	    &\quad \times \sum_{m=0}^\infty (-1)^{n-3} {-m-1 \choose n-3} q^{\frac{(2P(m+ m_0(\bm{p}, -\bm{\varepsilon})) + \ell(\bm{p}, -\bm{\varepsilon}))^2}{4P}}\\
		&= - \sum_{(\varepsilon_1, \dots, \varepsilon_n) \in \{\pm 1\}^n} \varepsilon_1 \cdots \varepsilon_n \sum_{m \leq 0} {m-1 \choose n-3} q^{\frac{(2P(m- n+2+m_0(\bm{p}, \bm{\varepsilon})) +\ell(\bm{p},\bm{\varepsilon}))^2}{4P}},
\end{align*}
where we use the equation ${-x-1 \choose n} = (-1)^n {x+n \choose n}$. By the same calculation as in the above proof, we have another expression.

\begin{lem}\label{Psi-expand-2}
\begin{align*}
	\Psi(q) &= -\sum_{(\varepsilon_1, \dots, \varepsilon_n) \in \{\pm 1\}^n} \frac{\varepsilon_1 \cdots \varepsilon_n}{(n-3)!}\\
		&\quad \times \sum_{k=0}^{n-3} \left(\sum_{l=k}^{n-3} \frac{(-\ell-2Pm_0)^{l-k}}{(2P)^l} \st{n-2}{l+1} {l \choose k} \right) \sum_{\substack{m \equiv \ell\ (2P) \\ m \leq 2P(m_0-n+2)+\ell}} m^k q^{\frac{m^2}{4P}}.
\end{align*}
\end{lem}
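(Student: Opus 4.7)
The plan is to mimic the proof of \cref{Psi-expand-1}, starting from the expression
\[
\Psi(q) = -\sum_{(\varepsilon_1,\dots,\varepsilon_n) \in \{\pm 1\}^n} \varepsilon_1 \cdots \varepsilon_n \sum_{m \leq 0} \binom{m-1}{n-3} q^{\frac{(2P(m-n+2+m_0(\bm{p},\bm{\varepsilon})) + \ell(\bm{p},\bm{\varepsilon}))^2}{4P}}
\]
derived immediately above via the substitution $\bm{\varepsilon} \mapsto -\bm{\varepsilon}$ combined with $\binom{-x-1}{n} = (-1)^n \binom{x+n}{n}$. The remaining work is the same Stirling--binomial unfolding performed for \cref{Psi-expand-1}, applied to this new starting point.

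The key move is to identify the correct shift hidden in $\binom{m-1}{n-3}$ so that it is compatible with the Gaussian exponent. Writing $m - 1 = (m - n + 2 + m_0) - m_0 + (n-3)$ and applying the identity $\binom{x+n}{n} = \frac{1}{n!} \sum_{l=0}^n \st{n+1}{l+1} x^l$ with $x = (m - n + 2 + m_0) - m_0$ and $n$ replaced by $n-3$, one obtains
\[
\binom{m-1}{n-3} = \frac{1}{(n-3)!} \sum_{l=0}^{n-3} \st{n-2}{l+1} \bigl((m - n + 2 + m_0) - m_0\bigr)^l.
\]
This choice matches precisely the shift appearing inside $q^{(2P(m - n + 2 + m_0) + \ell)^2/(4P)}$.

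Next I would substitute a single summation index $M = 2P(m-n+2+m_0) + \ell$, so that $M \equiv \ell \pmod{2P}$, the condition $m \leq 0$ becomes $M \leq 2P(m_0 - n + 2) + \ell$, and the exponent of $q$ becomes $M^2/(4P)$. Under this substitution $(m-n+2+m_0) - m_0 = (M - \ell - 2Pm_0)/(2P)$, and expanding by the binomial theorem gives
\[
\bigl((m - n + 2 + m_0) - m_0\bigr)^l = \frac{1}{(2P)^l} \sum_{k=0}^l \binom{l}{k} M^k (-\ell - 2Pm_0)^{l-k}.
\]

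Finally I would swap $\sum_{l=0}^{n-3}\sum_{k=0}^l = \sum_{k=0}^{n-3}\sum_{l=k}^{n-3}$ and reassemble; the overall minus sign is inherited from the starting expression, and the stated formula follows. The entire argument is routine bookkeeping parallel to the proof of \cref{Psi-expand-1}. The only nonroutine point, and the one I would double-check carefully, is the rewriting of $\binom{m-1}{n-3}$: one must feed the Stirling identity the variable $x = (m - n + 2 + m_0) - m_0$ (not, say, $m - 1$ itself) so that the new summation variable $M$ cleanly absorbs the same shift that appears inside the quadratic exponent.
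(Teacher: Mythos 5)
Your proposal is correct and follows exactly the route the paper intends: the paper's proof of \cref{Psi-expand-2} consists of the displayed $\bm{\varepsilon} \mapsto -\bm{\varepsilon}$ reflection followed by the remark that ``the same calculation as in the above proof'' applies, and your Stirling--binomial unfolding with $x = m-n+2$ and the reindexing $M = 2P(m-n+2+m_0)+\ell$ is precisely that calculation carried out explicitly.
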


Combining \cref{Psi-expand-1} and \cref{Psi-expand-2}, we have the following.

\begin{prop}\label{Psi-decomp}
	Let $m_0$ and $\ell$ be as in \eqref{m0-l-def}. Then we have
	\begin{align}\label{eq:Psi-decomp}
		\Psi(q) = \frac{1}{2} \sum_{(\varepsilon_1, \dots, \varepsilon_n) \in \{\pm 1\}^n} \varepsilon_1 \cdots \varepsilon_n \sum_{k=0}^{n-3} c_{\bm{\varepsilon}}(k) \sum_{m \equiv \ell\ (2P)} \sgn_{\bm{\varepsilon}}(m) m^k q^{\frac{m^2}{4P}},
	\end{align}
	where we put
	\[
		c_{\bm{\varepsilon}}(k) = \frac{(-1)^k}{P^k (n-3)!} \sum_{l=k}^{n-3} (-2)^{-l} \left(n-2+\sum_{j=1}^n \frac{\varepsilon_j}{p_j} \right)^{l-k}\st{n-2}{l+1} {l \choose k} \in \mathbb{Q}
	\]
	and
	\[
		\sgn_{\bm{\varepsilon}} (m) = \begin{cases}
			+1 &\text{if } m \geq 2Pm_0 + \ell,\\
			-1 &\text{if } m \leq 2P(m_0-n+2) + \ell,\\
			0 &\text{if otherwise}.
		\end{cases}
	\]
\end{prop}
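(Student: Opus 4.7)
The plan is to obtain \cref{Psi-decomp} by averaging the two identities for $\Psi(q)$ established in \cref{Psi-expand-1} and \cref{Psi-expand-2}. Both identities express the same series, so $\Psi(q) = \tfrac{1}{2}(\text{RHS of \cref{Psi-expand-1}} + \text{RHS of \cref{Psi-expand-2}})$, which accounts for the overall factor $1/2$ in \eqref{eq:Psi-decomp}. The inner sum in \cref{Psi-expand-1} ranges over $\{m \equiv \ell \pmod{2P} : m \geq 2Pm_0+\ell\}$ with coefficient $+1$, while the inner sum in \cref{Psi-expand-2} ranges over $\{m \equiv \ell \pmod{2P} : m \leq 2P(m_0-n+2)+\ell\}$ with coefficient $-1$. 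These two index sets are disjoint, and their complement in the full arithmetic progression $\{m \equiv \ell \pmod{2P}\}$ consists of exactly the $n-3$ intermediate values (empty when $n = 3$). Hence the sum of the two expansions collapses into a single sum over $m \equiv \ell \pmod{2P}$, weighted by the signed indicator $\sgn_{\bm{\varepsilon}}(m)$ defined in the statement.

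It then remains to identify the $k$-coefficient produced by this averaging with the stated $c_{\bm{\varepsilon}}(k)$. Using the defining relation \eqref{m0-l-def} we have $-\ell - 2Pm_0 = -P\bigl(n - 2 + \sum_j \varepsilon_j/p_j\bigr)$, so
\[
\frac{(-\ell - 2Pm_0)^{l-k}}{(2P)^l} = \frac{(-P)^{l-k}}{(2P)^l}\Bigl(n - 2 + \sum_{j=1}^n \frac{\varepsilon_j}{p_j}\Bigr)^{l-k} = \frac{(-1)^k (-2)^{-l}}{P^k}\Bigl(n - 2 + \sum_{j=1}^n \frac{\varepsilon_j}{p_j}\Bigr)^{l-k},
\]
where in the last step I use the elementary identity $(-1)^{l-k}/2^l = (-1)^k (-2)^{-l}$. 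Substituting this into the common $k$-coefficient of \cref{Psi-expand-1} and \cref{Psi-expand-2} yields exactly the stated $c_{\bm{\varepsilon}}(k)$; rationality $c_{\bm{\varepsilon}}(k) \in \mathbb{Q}$ is then immediate from the fact that $\sum_j \varepsilon_j/p_j \in \mathbb{Q}$.

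No genuine obstacle is anticipated beyond this bookkeeping, since the substantive work has already been carried out in the two preceding lemmas. The only points requiring care are the sign convention — one must verify that averaging the two lemmas produces $+\sgn_{\bm{\varepsilon}}$ rather than its negative — and the observation that the intermediate progression values correctly contribute $0$ because neither lemma's support includes them; both points are automatic from the statements of \cref{Psi-expand-1} and \cref{Psi-expand-2}.
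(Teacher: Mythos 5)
Your proposal is correct and matches the paper's route exactly: the paper obtains \cref{Psi-decomp} by the same averaging of \cref{Psi-expand-1} and \cref{Psi-expand-2} (it states this without writing out the bookkeeping), and your coefficient computation $(-\ell-2Pm_0)^{l-k}/(2P)^l = (-1)^k(-2)^{-l}P^{-k}\bigl(n-2+\sum_j \varepsilon_j/p_j\bigr)^{l-k}$ via \eqref{m0-l-def} is the intended identification of $c_{\bm{\varepsilon}}(k)$.
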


If the term $\sgn_{\bm{\varepsilon}} (m)$ in \eqref{eq:Psi-decomp} is replaced by the usual sign function, the inner sum becomes a false theta function. To finish this section, we define a polynomial $\mathcal{P}(q) \in \mathbb{Q}[q^{\frac{1}{4P}}]$ by
\begin{align}\label{def-P(q)}
	\mathcal{P}(q) = \frac{1}{2} \sum_{(\varepsilon_1, \dots, \varepsilon_n) \in \{\pm 1\}^n} \varepsilon_1 \cdots \varepsilon_n \sum_{k=0}^{n-3} c_{\bm{\varepsilon}}(k) \sum_{m \equiv \ell\ (2P)} (\sgn_{\bm{\varepsilon}}(m) - \sgn(m)) m^k q^{\frac{m^2}{4P}}.
\end{align}
Therefore we obtain the decomposition of the homological block.
\begin{thm}\label{Psi-P=theta-tilde}
	Let $\mathcal{P}(q)$ and $c_{\bm{\varepsilon}}(k)$ be as above. Then we have
	\[
		\Psi(q) - \mathcal{P}(q) = \frac{1}{2} \sum_{(\varepsilon_1, \dots, \varepsilon_n) \in \{\pm 1\}^n} \varepsilon_1 \cdots \varepsilon_n \sum_{k=0}^{n-3} (2P)^{k/2} c_{\bm{\varepsilon}}(k) \widetilde{\theta}_{k, \frac{\ell}{\sqrt{2P}}}(\tau),
	\]
	where the lattice is set by $L = \sqrt{2P}\mathbb{Z}$, and recall that $\Psi(q)$ is a factor of the homological block $\Phi(q)$, namely,
	\[
		\Phi(q) = \frac{(-1)^n}{2(q^{\frac{1}{2}} - q^{-\frac{1}{2}})} q^{-\frac{1}{4} \Theta_0} \Psi(q).
	\]
\end{thm}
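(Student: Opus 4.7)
The strategy is essentially bookkeeping: the key observation is that the truncated sign function $\sgn_{\bm\varepsilon}(m)$ appearing in \cref{Psi-decomp} differs from the usual sign function $\sgn(m)$ only on a finite set of $m$, and the polynomial $\mathcal{P}(q)$ is precisely defined in \eqref{def-P(q)} to absorb that finite discrepancy. Thus the plan is to start from the decomposition \eqref{eq:Psi-decomp}, subtract \eqref{def-P(q)} term by term, and recognize what remains as false theta functions after a rescaling of the summation variable.

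Concretely, I would first write
\begin{align*}
\Psi(q)-\mathcal{P}(q)=\frac{1}{2}\sum_{\bm{\varepsilon}\in\{\pm 1\}^n}\varepsilon_1\cdots\varepsilon_n\sum_{k=0}^{n-3}c_{\bm{\varepsilon}}(k)\sum_{m\equiv \ell\,(2P)}\bigl(\sgn_{\bm\varepsilon}(m)-(\sgn_{\bm\varepsilon}(m)-\sgn(m))\bigr)m^{k}q^{\frac{m^{2}}{4P}},
\end{align*}
so the combination in brackets collapses to $\sgn(m)$. What remains is $\sum_{m\equiv\ell\,(2P)}\sgn(m)\,m^{k}q^{m^{2}/(4P)}$, and this should be matched against $\widetilde{\theta}_{k,\ell/\sqrt{2P}}(\tau)$. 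With the lattice $L=\sqrt{2P}\,\mathbb{Z}$, substituting $m=\sqrt{2P}\,n$ sends $m^{2}/(4P)$ to $n^{2}/2$, sends $m^{k}$ to $(2P)^{k/2}n^{k}$, preserves the sign since $\sqrt{2P}>0$, and identifies the congruence class $m\equiv\ell\,(2P)$ with $n\in L+\ell/\sqrt{2P}$. Hence
\[
\sum_{m\equiv\ell\,(2P)}\sgn(m)\,m^{k}q^{\frac{m^{2}}{4P}}=(2P)^{k/2}\widetilde{\theta}_{k,\ell/\sqrt{2P}}(\tau),
\]
which, inserted into the collapsed expression, yields the claimed identity.

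Finally, the factorization of $\Phi(q)$ through $\Psi(q)$ is immediate from \cref{defi:WRT-function}: the exponent rearranges via
\[
\frac{P}{4}\Bigl(2m+n-2+\sum_{j=1}^{n}\frac{\varepsilon_{j}}{p_{j}}\Bigr)^{\!2}=\frac{1}{4P}\Bigl(2Pm+P\bigl(n-2+\textstyle\sum_{j}\varepsilon_{j}/p_{j}\bigr)\Bigr)^{\!2},
\]
so the double sum in \eqref{eq:def-WRT-function} coincides with the definition of $\Psi(q)$ given at the start of this section, and the prefactor reads off directly.

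I do not anticipate a serious obstacle: the proof is a matter of carefully tracking definitions. The only point that requires care is the power of $2P$ produced by the rescaling $m\mapsto\sqrt{2P}\,n$, which is exactly the factor $(2P)^{k/2}$ in the statement; this is the one place where an arithmetic slip would be easy to make.
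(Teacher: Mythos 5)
Your proposal is correct and follows essentially the same route as the paper: subtract \eqref{def-P(q)} from \eqref{eq:Psi-decomp} so that $\sgn_{\bm\varepsilon}(m)$ collapses to $\sgn(m)$, then identify $\sum_{m\equiv\ell\,(2P)}\sgn(m)\,m^{k}q^{m^{2}/(4P)}=(2P)^{k/2}\widetilde{\theta}_{k,\ell/\sqrt{2P}}(\tau)$ via the lattice $L=\sqrt{2P}\,\mathbb{Z}$. The power $(2P)^{k/2}$ you flag as the delicate point is handled exactly as in the paper's proof.
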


\begin{proof}
    By \cref{Psi-decomp} and the definition of the polynomial $\mathcal{P}(q)$ in~\eqref{def-P(q)}, we have
	\[
		\Psi(q) - \mathcal{P}(q) = \frac{1}{2} \sum_{(\varepsilon_1, \dots, \varepsilon_n) \in \{\pm 1\}^n} \varepsilon_1 \cdots \varepsilon_n \sum_{k=0}^{n-3} c_{\bm{\varepsilon}}(k) \sum_{m \equiv \ell\ (2P)} \sgn(m) m^k q^{\frac{m^2}{4P}}.
	\]
	For a lattice $L = \sqrt{2P}\mathbb{Z}$ and $\mu = \ell/\sqrt{2P} \in L'/L$ ($\ell = 0, 1, \dots, 2P-1$), the false theta function has an expression
	\[
		\widetilde{\theta}_{k,\frac{\ell}{\sqrt{2P}}}(\tau) = (2P)^{-k/2} \sum_{m \equiv \ell\ (2P)} \sgn(m) m^k q^{\frac{m^2}{4P}},
	\]
	which finishes the proof.
\end{proof}

\begin{ex}[The case of $(p_1, p_2, p_3) = (2,3,5)$]\label{2-3-5}
	For every $\varepsilon = (\varepsilon_1,\varepsilon_2,\varepsilon_3)$, the numbers $m_0$ and $\ell$ are given as the following table.
	Then we have $\mathcal{P}(q) = - 2 q^{\frac{1}{120}}$ and
\[
	\Psi(q) + 2 q^{\frac{1}{120}} = \frac{1}{2} \sum_{(\varepsilon_1, \varepsilon_2, \varepsilon_3) \in \{\pm 1\}^3} \varepsilon_1 \varepsilon_2 \varepsilon_3 \sum_{m \equiv \ell\ (60)} \sgn(m) q^{\frac{m^2}{120}},
\]
which equals $\widetilde{\Theta}_+(z)$ defined in Lawrence-Zagier~\cite{LawrenceZagier1999}.

\begin{table}[h]
    \begin{center}
	\begin{tabular}{|c|c|c|c|c||c|c|c|c|c|}  \hline
		$\varepsilon_1$ & $\varepsilon_2$ & $\varepsilon_3$ & $m_0$ & $\ell$ & $\varepsilon_1$ & $\varepsilon_2$ & $\varepsilon_3$ & $m_0$ & $\ell$ \\ \hline \hline
		$+1$ & $+1$ & $+1$ & $1$ & $1$ & $+1$ & $-1$ & $-1$ & $0$ & $29$\\ \hline
		$+1$ & $+1$ & $-1$ & $0$ & $49$ & $-1$ & $+1$ & $-1$ & $0$ & $19$\\ \hline
		$+1$ & $-1$ & $+1$ & $0$ & $41$ & $-1$ & $-1$ & $+1$ & $0$ & $11$\\ \hline
		$-1$ & $+1$ & $+1$ & $0$ & $31$ & $-1$ & $-1$ & $-1$ & $-1$ & $59$\\ \hline
	\end{tabular}
	\end{center}
	\caption{The list of $(m_0, \ell)$ corresponding to each $\bm{\varepsilon} \in \{\pm 1\}^3$.}
\end{table}
\end{ex}

\section{Main theorems}
\label{sec:mt}

In the last section, we show the $S$-transformation of the function 
\begin{align}\label{Psi-hat-def}
	\widehat{\Psi}(\tau) := \Psi(q) - \mathcal{P}(q)
\end{align}
in \cref{thm:S-trans-WRT}. We recall that the function $\Psi(q)$ is a factor of the homological block $\Phi(q)$ as explained in \cref{Psi-P=theta-tilde}, and $\mathcal{P}(q)$ is a suitable polynomial defined in \eqref{def-P(q)}. As an application, we obtain explicit asymptotic expansion formulas (\cref{thm:asymp-Andersen} and \cref{CS}) for the WRT invariants $\tau_K$ for any Seifert fibered integral homology $3$-sphere. They give a new proof of a version by Andersen of the Witten asymptotic conjecture.

\subsection{Asymptotic expansions}
\label{sec:Lfunct}

We recall the asymptotic expansion of a certain series shown by Lawrence-Zagier~\cite[Section 3]{LawrenceZagier1999} and its generalization. Let $C: \mathbb{Z} \to \mathbb{C}$ be a periodic function whose period is $M$. Moreover, we assume that its mean value is $0$, that is,
\[
	\sum_{m=1}^M C(m) = 0.
\]
Then the Dirichlet series $L(s,C) = \sum_{m=1}^\infty C(m) m^{-s}$ defines a holomorphic function in $\Re(s) > 1$ and extends holomorphically to the whole $\mathbb{C}$. The special values at negative integers are given by
\begin{align}\label{eq:L-val}
	L(-r,C) = - \frac{M^r}{r+1} \sum_{m=1}^M C(m) B_{r+1} \left(\frac{m}{M} \right),
\end{align}
where $B_m(x)$ is the $m$-th Bernoulli polynomial defined by
\[
	\sum_{m=0}^\infty B_m(x) \frac{t^m}{m!} = \frac{t e^{xt}}{e^t-1}.
\]

In the same way as Lawrence-Zagier, or as Andersen-Misteg{\aa}rd~\cite[Section 4.1.2]{AndersenMistegard2018} showed, the following asymptotic expansion holds.

\begin{lem}\label{lem:asymp-L}
	For any integer $r \geq 0$,
	\[
		\sum_{m=1}^\infty m^r C(m) e^{-m^2 t} \sim \sum_{m=0}^\infty L(-2m-r, C) \frac{(-t)^m}{m!} \quad (t \to 0+).
	\]
\end{lem}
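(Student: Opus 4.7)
The plan is to use the Mellin inversion representation
\[
    e^{-x} = \frac{1}{2\pi i} \int_{c - i\infty}^{c + i\infty} \Gamma(s) x^{-s} \, ds \qquad (x > 0,\ c > 0),
\]
substitute $x = m^2 t$, multiply by $m^r C(m)$, and interchange sum and integral. For $c > (r+1)/2$ the series $\sum_m C(m) m^{r - 2s}$ converges absolutely on $\Re(s) = c$, and the exponential decay of $\Gamma(c+iu)$ in $|u|$ furnished by Stirling's formula justifies the interchange. This yields the exact identity
\[
    \sum_{m=1}^\infty m^r C(m) e^{-m^2 t} = \frac{1}{2\pi i} \int_{c - i\infty}^{c + i\infty} \Gamma(s) \, L(2s - r, C) \, t^{-s} \, ds.
\]

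The next step is to shift the contour leftward to $\Re(s) = -N - 1/2$ for a fixed large integer $N$ and collect residues. The key input is that the mean-value assumption $\sum_{m=1}^M C(m) = 0$ makes $L(s, C)$ entire, so in the strip $-N - 1/2 \le \Re(s) \le c$ the only poles of the integrand are the simple poles of $\Gamma(s)$ at $s = 0, -1, \dots, -N$, whose residues are $(-1)^k/k!$. Using the formula \eqref{eq:L-val} (or rather just the existence of the values $L(-2k - r, C)$), summing these residues produces exactly the finite truncation
\[
    \sum_{k=0}^N L(-2k - r, C) \frac{(-t)^k}{k!}.
\]

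Finally, the remainder is the integral $R_N(t)$ along $\Re(s) = -N - 1/2$, which I would bound by $O(t^{N + 1/2})$ as $t \to 0+$. Rewriting $L(s, C)$ as a finite linear combination of Hurwitz zeta functions $\zeta(s, a/M)$ gives polynomial growth in $|\Im(s)|$ on vertical strips; combined with the factor $|\Gamma(-N - 1/2 + iu)|$, which decays like $e^{-\pi|u|/2}$ times a polynomial, the integrand is absolutely integrable on the shifted line and its modulus carries the factor $t^{N + 1/2}$. This gives the claimed asymptotic as $N$ was arbitrary.

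The main obstacle is the contour shift itself: one has to know that $\Gamma(s) L(2s - r, C) t^{-s}$ decays fast enough on the horizontal segments $\{\sigma \pm iT : -N - 1/2 \le \sigma \le c\}$ as $T \to \infty$ for the shift to be valid. This is standard once $L(s, C)$ is expressed via Hurwitz zetas and one invokes their known growth estimates on vertical strips, but it is the only nontrivial analytic point in the argument.
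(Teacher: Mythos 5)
Your proof is correct: it is the standard Mellin--transform argument (Cahen--Mellin representation, Fubini, contour shift past the poles of $\Gamma(s)$ using that $L(s,C)$ is entire and of polynomial growth on vertical strips), which is exactly the argument of Lawrence--Zagier and Andersen--Misteg{\aa}rd to which the paper delegates this lemma without giving its own proof. No gaps; the only delicate points (justifying the interchange and the decay on the horizontal segments) are handled as you indicate.
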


To show the $S$-transformation of $\widehat{\Psi}(\tau)$ and the asymptotic expansion of the WRT invariants, we consider the following special periodic function.

\begin{lem}
	Let $\ell = \ell(\bm{p}, \bm{\varepsilon})$ be as in \cref{sec:de-ftf}. For any integers $0 \leq k \leq n-3$ and $K \in \mathbb{Z}$, we define a periodic function $C_{k,K}: \mathbb{Z} \to \mathbb{C}$ whose period is $2P$ by
	\begin{align*}
		C_{k, K}(m) &= e^{2 \pi i \frac{-m^2 K}{4P}} \sum_{(\varepsilon_1, \dots, \varepsilon_n) \in \{\pm 1\}^n} \varepsilon_1 \cdots \varepsilon_n c_{\bm{\varepsilon}}(k) e^{2\pi i \frac{\ell m}{2P}}\\
			&= (-1)^{mn} e^{2\pi i \frac{-m^2 K}{4P}} \sum_{(\varepsilon_1, \dots, \varepsilon_n) \in \{\pm 1\}^n} \varepsilon_1 \cdots \varepsilon_n c_{\bm{\varepsilon}}(k) \prod_{j=1}^n e^{2\pi i \frac{\varepsilon_j m}{2p_j}},
	\end{align*}
	where $c_{\bm{\varepsilon}}(k) \in \mathbb{Q}$ is given in \cref{Psi-decomp}. Then the mean value of $C_{k,K}$ equals $0$, that is,
	\[
		\sum_{m=1}^{2P} C_{k, K}(m) = 0.
	\]
\end{lem}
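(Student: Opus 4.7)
The plan is to swap the order of summation and exploit parity symmetries of both the coefficient $c_{\bm{\varepsilon}}(k)$ and the inner Gauss sum. Using the second expression for $C_{k,K}(m)$ given in the statement, I would first rewrite
\begin{align*}
\sum_{m=1}^{2P} C_{k,K}(m) \;=\; \sum_{\bm{\varepsilon}\in\{\pm 1\}^n}\varepsilon_1\cdots\varepsilon_n\,c_{\bm{\varepsilon}}(k)\,T_{\bm{\varepsilon}},
\end{align*}
where $T_{\bm{\varepsilon}}:=\sum_{m=1}^{2P}(-1)^{mn} e^{-\pi i K m^2/(2P)}\prod_{j=1}^n e^{\pi i m\varepsilon_j/p_j}$. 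Substituting $m\mapsto 2P-m$ in the sum defining $T_{-\bm{\varepsilon}}$ leaves $(-1)^{mn}$ and $e^{-\pi i K m^2/(2P)}$ invariant (since $K$ and $P$ are integers) and converts each $e^{-\pi i m\varepsilon_j/p_j}$ back to $e^{\pi i m\varepsilon_j/p_j}$ using $P/p_j\in\mathbb{Z}$; this yields the symmetry $T_{-\bm{\varepsilon}}=T_{\bm{\varepsilon}}$.

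Next, I would identify $c_{\bm{\varepsilon}}(k)$, up to a scalar depending only on $k$ and $n$, with the $k$-th derivative $G^{(k)}(y)$ evaluated at $y=n-2+\sum_j\varepsilon_j/p_j$, where
\begin{align*}
G(y) \;:=\; \sum_{l=0}^{n-3}(-2)^{-l}\st{n-2}{l+1}y^l \;=\; \prod_{i=1}^{n-3}\bigl(i-\tfrac{y}{2}\bigr);
\end{align*}
the product formula follows from the Stirling identity $\sum_{l}\st{n-2}{l+1}x^l=\prod_{i=1}^{n-3}(x+i)$. The roots $\{2,4,\dots,2(n-3)\}$ of $G$ are symmetric about $y=n-2$, giving $G(2(n-2)-y)=(-1)^{n+1}G(y)$; differentiating $k$ times then yields $c_{-\bm{\varepsilon}}(k)=(-1)^{n+k+1}c_{\bm{\varepsilon}}(k)$. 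Combining this with the symmetry of $T_{\bm{\varepsilon}}$ via the reindexing $\bm{\varepsilon}\mapsto -\bm{\varepsilon}$ produces the identity $\sum_{m=1}^{2P} C_{k,K}(m) = (-1)^{k+1}\sum_{m=1}^{2P}C_{k,K}(m)$, which immediately forces the sum to vanish whenever $k$ is even.

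The main obstacle is the odd-$k$ case, where the above pairing only yields a tautology. My approach would be to apply Landsberg--Schaar reciprocity to each $T_{\bm{\varepsilon}}$, converting the summation over $\mathbb{Z}/(2P)\mathbb{Z}$ into a dual sum over $\mathbb{Z}/K\mathbb{Z}$, and then swap the order of summation. In the resulting combinatorial sum, the generating-function identity
\begin{align*}
\sum_{\bm{\varepsilon}\in\{\pm 1\}^n}\varepsilon_1\cdots\varepsilon_n\exp\bigl(t\textstyle\sum_j\varepsilon_j/p_j\bigr) \;=\; \prod_j 2\sinh(t/p_j) \;=\; O(t^n)
\end{align*}
implies $\sum_{\bm{\varepsilon}}\varepsilon_1\cdots\varepsilon_n(\sum_j\varepsilon_j/p_j)^r=0$ for every $r<n$, and since $c_{\bm{\varepsilon}}(k)$ is a polynomial of degree $n-3-k<n$ in $u=\sum_j\varepsilon_j/p_j$, the zero mode of the dual sum vanishes. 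The careful bookkeeping of Gauss-sum phases and the cancellation of the non-zero Fourier modes (using that the pairwise coprimality of $p_1,\ldots,p_n$ prevents $\sum_j\varepsilon_j/p_j$ from being an integer) is where the real technical work lies.
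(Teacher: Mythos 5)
Your even-$k$ argument is correct: the identification of $c_{\bm{\varepsilon}}(k)$ with $G^{(k)}\bigl(n-2+\sum_j\varepsilon_j/p_j\bigr)$ for $G(y)=\prod_{i=1}^{n-3}(i-y/2)$, the resulting antisymmetry $c_{-\bm{\varepsilon}}(k)=(-1)^{n+k+1}c_{\bm{\varepsilon}}(k)$, and the symmetry $T_{-\bm{\varepsilon}}=T_{\bm{\varepsilon}}$ under $m\mapsto 2P-m$ all check out, and together they do force the sum to vanish for even $k$. But the odd-$k$ case is a genuine gap, not a technicality you can defer. A Landsberg--Schaar-type reciprocity applied to $T_{\bm{\varepsilon}}=\sum_{m\bmod 2P}e^{2\pi i(-Km^2+2P(n+u_{\bm{\varepsilon}})m)/(4P)}$ with $u_{\bm{\varepsilon}}=\sum_j\varepsilon_j/p_j$ produces a prefactor whose phase is quadratic in $u_{\bm{\varepsilon}}$ (of the shape $e^{\pi iP(n+u_{\bm{\varepsilon}})^2/(2K)}$), so each dual mode depends on $\bm{\varepsilon}$ through an exponential, not a polynomial of degree $<n$, and your moment identity $\sum_{\bm{\varepsilon}}\varepsilon_1\cdots\varepsilon_n\,u_{\bm{\varepsilon}}^r=0$ ($r<n$) cannot be applied mode by mode --- not even to the zero mode. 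Moreover the lemma is stated for every $K\in\mathbb{Z}$, including $K=0$ and $K<0$, where a dual sum over $\mathbb{Z}/K\mathbb{Z}$ does not exist. As written, the claim is proved only for even $k$.

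The frustrating part is that you already hold both ingredients of the paper's proof and only need to combine them differently. The paper shows that $T_{\bm{\varepsilon}}$ is independent of $\bm{\varepsilon}$ altogether --- not merely invariant under the global flip $\bm{\varepsilon}\mapsto-\bm{\varepsilon}$ --- by reindexing $m\mapsto\phi_{\bm{\varepsilon}}(m)$, where $\phi_{\bm{\varepsilon}}$ is the CRT bijection of $\mathbb{Z}/2P\mathbb{Z}$ with $\phi_{\bm{\varepsilon}}(m)\equiv\varepsilon_jm\pmod{2p_j}$ for all $j$. One checks $\phi_{\bm{\varepsilon}}(m)\equiv m\pmod 2$ and $\phi_{\bm{\varepsilon}}(m)^2\equiv m^2\pmod{4P}$ (each $2p_j$ divides one of the two even factors of $\phi_{\bm{\varepsilon}}(m)^2-m^2$), so the sign $(-1)^{mn}$ and the quadratic phase are untouched while $\prod_je^{2\pi i\varepsilon_j\phi_{\bm{\varepsilon}}(m)/(2p_j)}$ becomes $\prod_je^{2\pi im/(2p_j)}$. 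The whole sum then factors as $\bigl(\sum_{\bm{\varepsilon}}\varepsilon_1\cdots\varepsilon_n c_{\bm{\varepsilon}}(k)\bigr)\cdot T$, and this vanishes for all $k$ at once by exactly your observation that $\sum_{\bm{\varepsilon}}\varepsilon_1\cdots\varepsilon_n\exp\bigl(t\sum_j\varepsilon_j/p_j\bigr)=\prod_j2\sinh(t/p_j)=O(t^n)$ while $c_{\bm{\varepsilon}}(k)$ has degree $n-3-k<n$ in $u_{\bm{\varepsilon}}$. So drop the reciprocity step entirely and promote the $\sinh$ identity from a remark about the ``zero mode'' to the main step; the parity argument in your first two paragraphs then also becomes unnecessary.
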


\begin{proof}
	By the Chinese remainder theorem, for each $\bm{\varepsilon} = (\varepsilon_1, \dots, \varepsilon_n) \in \{\pm 1\}^n$, there exists a bijection $\phi_{\bm{\varepsilon}}: \mathbb{Z}/2P\mathbb{Z} \to \mathbb{Z}/2P\mathbb{Z}$ such that
	\[
		\begin{cases}
			\phi_{\bm{\varepsilon}}(m) \equiv \varepsilon_1 m \pmod{2p_1},\\
			\phi_{\bm{\varepsilon}}(m) \equiv \varepsilon_2 m \pmod{2p_2},\\
			\qquad \vdots\\
			\phi_{\bm{\varepsilon}}(m) \equiv \varepsilon_n m \pmod{2p_n}.
		\end{cases}
	\]
	By replacing $m \mapsto \phi_{\bm{\varepsilon}}(m)$,
	\begin{align*}
		\sum_{m=1}^{2P} C_{k,K}(m) = \sum_{(\varepsilon_1, \dots, \varepsilon_n) \in \{\pm 1\}^n} \varepsilon_1 \cdots \varepsilon_n c_{\bm{\varepsilon}}(k) \sum_{m=1}^{2P} (-1)^{mn} e^{2\pi i \frac{-m^2 K}{4P}} \prod_{j=1}^n e^{2\pi i \frac{m}{2p_j}}.
	\end{align*}
	Therefore, it suffices to show that
	\begin{align}\label{ep-sum-vanish}
		\sum_{(\varepsilon_1, \dots, \varepsilon_n) \in \{\pm 1\}^n} \varepsilon_1 \cdots \varepsilon_n c_{\bm{\varepsilon}}(k) = 0
	\end{align}
	for $0 \leq k \leq n-3$. By substituting the definition of $c_{\bm{\varepsilon}}(k)$, the left-hand side of \eqref{ep-sum-vanish} is expressed as 
	\begin{align}\label{eps-experss}
		\sum_{\substack{J = \{j_1, \dots, j_t\} \subset \{1, \dots, n\} \\ t \geq k+3}} e_k(J) \sum_{(\varepsilon_1, \dots, \varepsilon_n) \in \{\pm 1\}^n} \varepsilon_{j_1} \cdots \varepsilon_{j_t}
	\end{align}
	for some rational coefficients $e_k(J) \in \mathbb{Q}$. For each $J$, the inner sum equals $0$, that is, \eqref{ep-sum-vanish} holds.
\end{proof}

We make use of the following lemma in later calculations.

\begin{lem}\label{CK-sincosprod}
	For $0 \leq k \leq n-3$, we define $\widetilde{C}_k(m)$ satisfying
	\[
		C_{k,K}(m) = e^{2\pi i \frac{-m^2 K}{4P}} \widetilde{C}_{k}(m).
	\]
	If $m$ is divisible by at least $n-k-2$ of the $p_j$'s, then $\widetilde{C}_k (m) = 0$.
\end{lem}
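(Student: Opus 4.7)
The plan is to reduce the sum over $\bm{\varepsilon}$ defining $\widetilde{C}_k(m)$ to a product of one-dimensional sums — each evaluating to a sine or cosine in $\pi m/p_j$ — and then apply a pigeonhole argument. The crucial input is a low-degree structural bound on $c_{\bm{\varepsilon}}(k)$ regarded as a polynomial in the signs $\varepsilon_1, \dots, \varepsilon_n$.

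First, I would expand the factor $(n-2 + \sum_j \varepsilon_j/p_j)^{l-k}$ appearing in $c_{\bm{\varepsilon}}(k)$ via the multinomial theorem, then use $\varepsilon_j^2 = 1$ to reduce every monomial to a squarefree product of $\varepsilon_j$'s. Since $l \leq n-3$, each such product involves at most $n-3-k$ distinct indices, and therefore
\[
    c_{\bm{\varepsilon}}(k) = \sum_{\substack{J \subseteq \{1,\dots,n\} \\ |J| \leq n-3-k}} f_J(k) \prod_{j \in J} \varepsilon_j
\]
for some rational coefficients $f_J(k) \in \mathbb{Q}$.

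Substituting this into the definition of $\widetilde{C}_k(m)$ and using $\varepsilon_j^2 = 1$ once more replaces $\varepsilon_1 \cdots \varepsilon_n \prod_{j \in J} \varepsilon_j$ by $\prod_{j \notin J} \varepsilon_j$. The sum over $\bm{\varepsilon}$ then factors index by index: one obtains $\sum_{\varepsilon_j = \pm 1} e^{\pi i \varepsilon_j m/p_j} = 2\cos(\pi m/p_j)$ for each $j \in J$ and $\sum_{\varepsilon_j = \pm 1} \varepsilon_j e^{\pi i \varepsilon_j m/p_j} = 2i \sin(\pi m/p_j)$ for each $j \notin J$. In particular every $J$-summand contains the factor $\prod_{j \notin J} \sin(\pi m/p_j)$.

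Finally, let $S = \{j : p_j \mid m\}$, so $\sin(\pi m/p_j) = 0$ for $j \in S$. If $|S| \geq n-k-2$, then for every $J$ appearing in the sum we have $|J^c| \geq n - (n-3-k) = k+3$, and the count $|S| + |J^c| \geq (n-k-2) + (k+3) = n+1 > n$ forces $S \cap J^c \neq \emptyset$ by pigeonhole. Thus at least one $\sin(\pi m/p_j)$ with $j \notin J$ vanishes in every $J$-term, giving $\widetilde{C}_k(m) = 0$. The main obstacle is really just the bookkeeping in the first step — carefully verifying that no $\varepsilon$-monomial of degree exceeding $n-3-k$ survives in $c_{\bm{\varepsilon}}(k)$; once that degree bound is in hand, the factorization and the pigeonhole count are routine.
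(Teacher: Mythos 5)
Your proof is correct and follows essentially the same route as the paper: the paper likewise expands $\varepsilon_1\cdots\varepsilon_n\, c_{\bm{\varepsilon}}(k)$ into $\varepsilon$-monomials indexed by subsets of size at least $k+3$ (the complements of your sets $J$), factors the $\bm{\varepsilon}$-sum into a product of sines over that subset and cosines over its complement, and concludes by the same pigeonhole count. The only difference is that you spell out the degree bound on $c_{\bm{\varepsilon}}(k)$ explicitly, which the paper delegates to its earlier display \eqref{eps-experss}.
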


\begin{proof}
	By \eqref{eps-experss}, we have
	\begin{align*}
		\widetilde{C}_k(m) = (-1)^{mn} \sum_{\substack{J = \{j_1, \dots, j_t\} \subset \{1, \dots, n\} \\ t \geq k+3}} e_k(J) \sum_{(\varepsilon_1, \dots, \varepsilon_n) \in \{\pm 1\}^n} \varepsilon_{j_1} \cdots \varepsilon_{j_t} \prod_{j=1}^n e^{2\pi i \frac{\varepsilon_j m}{2p_j}}.
	\end{align*}
	For each $J$, the most inner sum is expressed as
	\begin{align*}
		\sum_{(\varepsilon_1, \dots, \varepsilon_n) \in \{\pm 1\}^n} \varepsilon_{j_1} \cdots \varepsilon_{j_t} \prod_{j=1}^n e^{2\pi i \frac{\varepsilon_j m}{2p_j}} &= \prod_{j \in J} (e^{\pi i \frac{m}{p_j}} - e^{-\pi i \frac{m}{p_j}}) \prod_{j \not\in J} (e^{\pi i \frac{m}{p_j}} + e^{-\pi i \frac{m}{p_j}})\\
		&= 2^n i^t \prod_{j \in J} \sin \left(\frac{\pi m}{p_j} \right) \prod_{j \not\in J} \cos \left(\frac{\pi m}{p_j} \right).
	\end{align*}
	Since $\lvert J\rvert = t \geq k+3$, if $m$ is divisible by at least $n-k-2$ of the $p_j$'s, there exists a $j \in J$ such that $m \equiv 0 \pmod{p_j}$. Thus each term of $\widetilde{C}_k (m)$ equals $0$.
\end{proof}

In the particular case of $k=n-3$, $\widetilde{C}_k(m)$ satisfies a simple product formula.

\begin{cor}\label{CK-sinproduct}
	\[
		\widetilde{C}_{n-3} (m) = \frac{(-1)^{mn} (2i)^n}{(2P)^{n-3} (n-3)!} \prod_{j=1}^n \sin \left(\frac{\pi m}{p_j} \right).
	\]
\end{cor}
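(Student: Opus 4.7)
The strategy is to specialize the $\sin/\cos$ decomposition of $\widetilde{C}_k(m)$ obtained during the proof of \cref{CK-sincosprod} to the extremal case $k = n-3$. That proof rewrites
\[
\widetilde{C}_k(m) = (-1)^{mn} \sum_{\substack{J \subset \{1,\ldots,n\} \\ |J| \geq k+3}} e_k(J)\, 2^n i^{|J|} \prod_{j \in J} \sin\!\Bigl(\frac{\pi m}{p_j}\Bigr) \prod_{j \notin J} \cos\!\Bigl(\frac{\pi m}{p_j}\Bigr),
\]
where the coefficients $e_k(J) \in \mathbb{Q}$ arise from expanding $\varepsilon_1 \cdots \varepsilon_n\, c_{\bm{\varepsilon}}(k)$ as a polynomial in the $\varepsilon_j$'s (using $\varepsilon_j^2 = 1$) and reading off the coefficient of $\prod_{j \notin J} \varepsilon_j$. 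When $k = n-3$, the constraint $|J| \geq n$ forces $J = \{1,\ldots,n\}$, so a single term survives, with no cosine factors and $i^{|J|} = i^n$.

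It then remains to evaluate $e_{n-3}(\{1,\ldots,n\})$, which is the constant term of $c_{\bm{\varepsilon}}(n-3)$ viewed as a polynomial in $\bm{\varepsilon}$. Setting $k = n-3$ in the formula for $c_{\bm{\varepsilon}}(k)$ from \cref{Psi-decomp} collapses the outer sum over $l$ to the single index $l = n-3$, at which $(n-2 + \sum_j \varepsilon_j/p_j)^{l-k} = 1$. Hence $c_{\bm{\varepsilon}}(n-3)$ is already $\bm{\varepsilon}$-independent, and using $\st{n-2}{n-2} = 1$ together with $\binom{n-3}{n-3} = 1$ gives
\[
c_{\bm{\varepsilon}}(n-3) = \frac{(-1)^{n-3}}{P^{n-3} (n-3)!} \cdot (-2)^{-(n-3)} = \frac{1}{(2P)^{n-3}(n-3)!}.
\]

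Plugging $J = \{1,\ldots,n\}$ and this value of $e_{n-3}(\{1,\ldots,n\})$ into the displayed decomposition of $\widetilde{C}_k(m)$, and combining $2^n i^n = (2i)^n$, produces the claimed identity. All the analytic content is already packaged in \cref{CK-sincosprod}, so I do not anticipate a real obstacle; the only subtle point is the sign-tracking collapse $(-1)^{n-3} \cdot (-2)^{-(n-3)} = 2^{-(n-3)}$, which is where an unwary computation could go astray.
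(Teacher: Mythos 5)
Your proposal is correct and follows essentially the same route as the paper: the paper likewise substitutes $k=n-3$, observes that $c_{\bm{\varepsilon}}(n-3)=1/((2P)^{n-3}(n-3)!)$ is independent of $\bm{\varepsilon}$, and invokes the $\sin$-product evaluation of the $\bm{\varepsilon}$-sum already carried out in the proof of \cref{CK-sincosprod}. Your phrasing via the surviving subset $J=\{1,\dots,n\}$ in the $\sin/\cos$ decomposition is just a repackaging of that same computation, and your sign bookkeeping $(-1)^{n-3}(-2)^{-(n-3)}=2^{-(n-3)}$ is right.
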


\begin{proof}
	For $k = n-3$, we have
	\begin{align*}
		\widetilde{C}_{n-3}(m) &= \frac{(-1)^{mn}}{(2P)^{n-3} (n-3)!} \sum_{(\varepsilon_1, \dots, \varepsilon_n) \in \{\pm 1\}^n} \varepsilon_1 \cdots \varepsilon_n \prod_{j=1}^n e^{2\pi i \frac{\varepsilon_j m}{2p_j}}.
	\end{align*}
	As we see above, the sum over $\bm{\varepsilon}$ equals the product of sine functions.
\end{proof}

\subsection{$S$-transformations of Homological blocks}
\label{sec:Psi-S}

By \eqref{ep-sum-vanish}, we can apply \cref{false-theta-general-k} for $\bm{a} = (\varepsilon_1 \cdots \varepsilon_n c_{\bm{\varepsilon}}(k))_{\bm{\varepsilon}}$ and $\bm{\mu} = (\ell(\bm{p}, \bm{\varepsilon})/\sqrt{2P})_{\bm{\varepsilon}}$ to the expression given in \cref{Psi-P=theta-tilde},
\[
	\widehat{\Psi}(\tau) = \frac{1}{2} \sum_{(\varepsilon_1, \dots, \varepsilon_n) \in \{\pm 1\}^n} \varepsilon_1 \cdots \varepsilon_n \sum_{k=0}^{n-3} (2P)^{k/2} c_{\bm{\varepsilon}}(k) \widetilde{\theta}_{k, \frac{\ell}{\sqrt{2P}}}(\tau).
\]
Therefore, we obtain 
\begin{align*}
	\widehat{\Psi} \left(-\frac{1}{\tau}\right)
	&= -\frac{1}{2} \sgn(\Re(\tau)) \sum_{k=0}^{n-3} \frac{(-1)^\iota (-i)^{1/2} (2P)^{\frac{k-1}{2}} \tau^{\kappa+\iota+\frac{1}{2}}}{(2\pi i)^\kappa} \sum_{r=0}^\kappa d_{\kappa,\iota,r} \tau^r\\
	&\qquad \times \sum_{\nu =0}^{2P-1} \sum_{(\varepsilon_1, \dots, \varepsilon_n) \in \{\pm 1\}^n} \varepsilon_1 \cdots \varepsilon_n c_{\bm{\varepsilon}}(k) e^{2\pi i \frac{\ell \nu}{2P}} \widetilde{\theta}_{2r+\iota, \frac{\nu}{\sqrt{2P}}}(\tau)\\
	&\quad -\frac{i}{2} \sum_{k=0}^{n-3} (2P)^{k/2} \int_0^{i\infty} \frac{\sum_{(\varepsilon_1, \dots, \varepsilon_n) \in \{\pm 1\}^n} \varepsilon_1 \cdots \varepsilon_n c_{\bm{\varepsilon}}(k) \theta_{k+1, \frac{\ell}{\sqrt{2P}}}(z)}{\sqrt{-i(z+1/\tau)}} dz,
\end{align*}
where $(\kappa, \iota)$ is defined for each $k$ by $k = 2\kappa + \iota$ as in \cref{false-theta-general-k}. By putting
\begin{align*}
	\widetilde{\eta}_{k,r}(\tau) &= -\frac{1}{2} \sum_{\nu =0}^{2P-1} \sum_{(\varepsilon_1, \dots, \varepsilon_n) \in \{\pm 1\}^n} \varepsilon_1 \cdots \varepsilon_n c_{\bm{\varepsilon}}(k) e^{2\pi i \frac{\ell \nu}{2P}} \widetilde{\theta}_{2r+\iota, \frac{\nu}{\sqrt{2P}}}(\tau),\\
	\eta_k(\tau) &= -\frac{1}{2} \sum_{(\varepsilon_1, \dots, \varepsilon_n) \in \{\pm 1\}^n} \varepsilon_1 \cdots \varepsilon_n c_{\bm{\varepsilon}}(k) \theta_{k, \frac{\ell}{\sqrt{2P}}}(\tau),
\end{align*}
we obtain the $S$-transformation formula of $\widehat{\Psi}(\tau)$ as follows.

\begin{thm}\label{thm:S-trans-WRT}
	The function $\widehat{\Psi}(\tau)$ satisfies
	\begin{align*}
		\widehat{\Psi} \left(-\frac{1}{\tau}\right) &= \sgn(\Re(\tau)) \sum_{k=0}^{n-3} \frac{(-1)^\iota (-i)^{1/2} (2P)^{\frac{k-1}{2}} \tau^{\kappa+\iota+\frac{1}{2}}}{(2\pi i)^\kappa} \sum_{r=0}^\kappa d_{\kappa,\iota,r} \tau^r \widetilde{\eta}_{k,r}(\tau)\\
		&\quad +i \sum_{k=0}^{n-3} (2P)^{k/2} \int_0^{i\infty} \frac{\eta_{k+1}(z)}{\sqrt{-i(z+1/\tau)}} dz,
	\end{align*}
	where $k=2\kappa+\iota$ as above.
\end{thm}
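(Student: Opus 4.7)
The plan is to apply Proposition \ref{false-theta-general-k} termwise to the false-theta decomposition of $\widehat{\Psi}(\tau)$ already supplied by Theorem \ref{Psi-P=theta-tilde}, namely
\[
\widehat{\Psi}(\tau) = \frac{1}{2} \sum_{\bm{\varepsilon} \in \{\pm 1\}^n} \varepsilon_1 \cdots \varepsilon_n \sum_{k=0}^{n-3} (2P)^{k/2} c_{\bm{\varepsilon}}(k)\, \widetilde{\theta}_{k, \ell(\bm{p},\bm{\varepsilon})/\sqrt{2P}}(\tau).
\]
For each fixed $k$, I would treat the inner sum over $\bm{\varepsilon}\in\{\pm 1\}^n$ as a linear combination $\widetilde{\theta}_{k,\bm{\mu},\bm{a}}(\tau)$ of the form in \eqref{mu-a}, indexed by the $\bm{\varepsilon}$'s, with coefficients $a_{\bm{\varepsilon}} = \varepsilon_1\cdots\varepsilon_n c_{\bm{\varepsilon}}(k)$ and shifts $\mu_{\bm{\varepsilon}} = \ell(\bm{p},\bm{\varepsilon})/\sqrt{2P}$, working with the lattice $L = \sqrt{2P}\,\mathbb{Z}$, so that $M = 2P$ and $L'/L \cong \{0,1,\dots,2P-1\}/\sqrt{2P}$.

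The only nontrivial hypothesis of Proposition \ref{false-theta-general-k} is $\sum_{\bm{\varepsilon}} a_{\bm{\varepsilon}} = 0$; this is exactly the vanishing identity \eqref{ep-sum-vanish} established in Section \ref{sec:Lfunct}, valid for every $0\le k\le n-3$. Granting it, Proposition \ref{false-theta-general-k} immediately produces two pieces on the $S$-transformed side: a finite sum over $\nu \in L'/L$ of twisted false theta functions $\widetilde{\theta}_{2r+\iota,\nu/\sqrt{2P}}(\tau)$, weighted by the Fourier-type factor $\sum_{\bm{\varepsilon}} \varepsilon_1\cdots\varepsilon_n c_{\bm{\varepsilon}}(k)\, e^{2\pi i \ell\nu/(2P)}$, together with an Eichler-type integral $-i\int_0^{i\infty} \theta_{k+1,\bm{\mu},\bm{a}}(z)/\sqrt{-i(z+1/\tau)}\,dz$, both carrying the asserted powers of $\tau$ and $2\pi i$ through the recursive coefficients $d_{\kappa,\iota,r}$.

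The remainder is pure bookkeeping: multiply by $\tfrac{1}{2}(2P)^{k/2}$, sum over $k$, combine with $1/\sqrt{M} = (2P)^{-1/2}$ from the proposition to recover the displayed power $(2P)^{(k-1)/2}$, and repackage the resulting $\nu$- and $\bm{\varepsilon}$-sums into the abbreviations $\widetilde{\eta}_{k,r}(\tau)$ and $\eta_{k+1}(\tau)$ introduced just before the theorem. The prefactor $-\tfrac{1}{2}$ that is built into the definitions of $\widetilde{\eta}_{k,r}$ and $\eta_k$ absorbs both the overall $\tfrac{1}{2}$ in front of $\widehat{\Psi}$ and the leading minus sign coming from Proposition \ref{false-theta-general-k}, explaining the sign of the first term in the conclusion and the $+i$ in front of the integral. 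I do not anticipate any genuine obstacle: the substantive mathematical input — the vanishing of the coefficient sum, which is precisely what cancels the potentially divergent $\nu=0$ contribution of $\theta_{0,0}$ in the proof of Proposition \ref{false-theta-general-k} — has already been secured, and everything else reduces to careful tracking of signs, of powers of $\tau$, and of powers of $2\pi i$.
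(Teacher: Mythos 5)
Your proposal is correct and follows essentially the same route as the paper: apply \cref{false-theta-general-k} for each fixed $k$ to the combination indexed by $\bm{\varepsilon}$ with $a_{\bm{\varepsilon}} = \varepsilon_1\cdots\varepsilon_n c_{\bm{\varepsilon}}(k)$ and $\mu_{\bm{\varepsilon}} = \ell(\bm{p},\bm{\varepsilon})/\sqrt{2P}$, using \eqref{ep-sum-vanish} to verify the zero-sum hypothesis, and then repackage into $\widetilde{\eta}_{k,r}$ and $\eta_{k+1}$. Your accounting of the $(2P)^{(k-1)/2}$ power and of how the $-\tfrac{1}{2}$ in the definitions of $\widetilde{\eta}_{k,r}$ and $\eta_k$ absorbs the overall $\tfrac{1}{2}$ and the signs matches the paper's computation exactly.
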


\subsection{Witten's asymptotic conjecture}
\label{sec:Psi-asy}

Let $K$ be a positive integer. By \cref{thm:q-series-and-quantum-invariant} and \cref{Psi-P=theta-tilde},
\begin{align}\label{Psihat-WRT}
\begin{split}
	\widehat{\Psi}(1/K) &:= \lim_{t \to 0+} \widehat{\Psi}(1/K + it)\\ 
	&= (-1)^n 2 \xi_K^{\frac{1}{4} \Theta_0} (\xi_K^{1/2} - \xi_K^{-1/2}) \tau_K - \mathcal{P}(\xi_K),
\end{split}
\end{align}
where $\xi_K = e^{2\pi i/K}$. On the other hand, we can also compute the limit
\[
	\lim_{t \to 0+} \widehat{\Psi}(1/K + it) = \lim_{t \to 0+} \widehat{\Psi} \left(-\frac{1}{-K + it} \right)
\]
by using \cref{thm:S-trans-WRT}. Based on this idea, we show the following formula of the WRT invariant.

\begin{thm}\label{thm:asymp-Andersen}
	The notations are the same as before. Then the asymptotic expansion in the limit $K \to \infty$ is given by
	\begin{align*}
		\sqrt{2/K} \xi_K^{\frac{1}{4} \Theta_0} \sin(\pi/K) \tau_K &\sim \sum_{m=1}^{2P} e^{2\pi i \frac{-m^2 K}{4P}} \sum_{k=0}^{n-3} \sum_{r=0}^\kappa  \alpha_{k,r}(m) K^{\kappa+\iota+r}\\
		&\quad + \sum_{r=0}^\infty \beta_{r} K^{-r-1/2}
		+
		\frac{(-1)^n}{2\sqrt{2} i} K^{-1/2} \mathcal{P}(\xi_K),
	\end{align*}
	where $k = 2\kappa + \iota$ as before and the coefficients are defined by
	\begin{align*}
		\alpha_{k,r}(m) 
		&= \frac{(-1)^{\kappa+r+n+1} i^{-1/2} (2P)^{\frac{k+2r+\iota-1}{2}} }{2^{5/2} (4\pi i)^{\kappa-r} (2r+\iota+1)!} \frac{k!}{(\kappa-r)!} \\
		&\quad \times \bigg(\widetilde{C}_k(m) - (-1)^k \widetilde{C}_k(-m) \bigg) B_{2r+\iota+1} \left(\frac{m}{2P}\right),\\
		\beta_{r} &= (-1)^n \frac{(2r-1)!! (-i)^r}{2^{r+3/2} r!} \sum_{k=0}^{n-3} (2P)^{k/2} i \int_0^{\infty} \frac{\eta_{k+1}(iy)}{y^{r+1/2}} dy.
	\end{align*}
	Here the integral in the definition of $\beta_{r}$ converges since $\eta_{k+1}(iy)$ decays exponentially as $y \to 0$ and $y \to \infty$. 
\end{thm}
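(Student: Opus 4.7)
The plan is to compute $\widehat{\Psi}(1/K) = \lim_{t\to 0+}\widehat{\Psi}(-1/(-K+it))$ via the $S$-transformation \cref{thm:S-trans-WRT} and then read off the asymptotic expansion in $K$ from the identity \eqref{Psihat-WRT}. Rewriting $\xi_K^{1/2}-\xi_K^{-1/2}=2i\sin(\pi/K)$ in \eqref{Psihat-WRT} converts it into
\[
\sqrt{2/K}\,\xi_K^{\Theta_0/4}\sin(\pi/K)\,\tau_K = \frac{(-1)^n}{4i}\sqrt{2/K}\,\bigl[\widehat{\Psi}(1/K)+\mathcal{P}(\xi_K)\bigr],
\]
so that the $\mathcal{P}(\xi_K)$ contribution appears directly as the last summand in the theorem. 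What remains is to analyze $\widehat{\Psi}(1/K)$ via \cref{thm:S-trans-WRT} at $\tau = -K + i0^+$, for which the branch convention $\arg\sqrt{z}\in(-\pi/2,\pi/2]$ gives $\sqrt{\tau} = i\sqrt{K}$, hence $\tau^{\kappa+\iota+r+1/2} = i(-1)^{\kappa+\iota+r}K^{\kappa+\iota+r+1/2}$, and $\sgn(\Re\tau) = -1$.

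For the $\widetilde{\eta}_{k,r}$ piece, I would expand each false theta function and interchange the sums over $\nu$, $\bm{\varepsilon}$, and $m$. The $\bm{\varepsilon}$-sum produces precisely the periodic function $\widetilde{C}_k(m)$ of \cref{CK-sincosprod} (weighted by $e^{-\pi i m^2 K/(2P)}$, i.e.\ by $C_{k,K}(m)$), so that
\[
\widetilde{\eta}_{k,r}(-K+it) = -\frac{(2P)^{-(2r+\iota)/2}}{2}\sum_{m=1}^\infty m^{2r+\iota}\bigl[C_{k,K}(m)-(-1)^\iota C_{k,K}(-m)\bigr]\,e^{-\pi m^2 t/(2P)}.
\]
The periodic function $C_{k,K}(\cdot)-(-1)^\iota C_{k,K}(-\cdot)$ has mean zero, so \cref{lem:asymp-L} applies and gives the $t\to 0+$ limit as the $L$-value $L(-(2r+\iota),\,\cdot\,)$, which by \eqref{eq:L-val} is a finite Bernoulli sum. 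Collecting prefactors, multiplying by $(-1)^n\sqrt{2/K}/(4i)$, and using the identity $d_{\kappa,\iota,r}/\bigl((2\pi i)^\kappa(2r+\iota+1)\bigr) = k!/\bigl((4\pi i)^{\kappa-r}(\kappa-r)!(2r+\iota+1)!\bigr)$ (immediate from the definition of $d_{\kappa,\iota,r}$) reproduces the exact coefficient $\alpha_{k,r}(m)$, using $(-1)^k = (-1)^\iota$.

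For the integral term of \cref{thm:S-trans-WRT}, after the substitution $z = iy$ and $t\to 0+$, the term becomes $-\sum_k (2P)^{k/2}\int_0^\infty \eta_{k+1}(iy)/\sqrt{y+i/K}\,dy$. Since $\eta_{k+1}(iy)$ decays exponentially at both $y\to 0$ and $y\to\infty$ (as noted in the theorem statement), the binomial expansion
\[
\frac{1}{\sqrt{y+i/K}} = \frac{1}{\sqrt{y}}\sum_{r\geq 0}\binom{-1/2}{r}\frac{i^r}{(Ky)^r}
\]
may be integrated termwise. Applying the prefactor $(-1)^n\sqrt{2/K}/(4i)$ and simplifying $\binom{-1/2}{r} = (-1)^r(2r-1)!!/(2^r r!)$ together with $\sqrt{2}/4 = 2^{-3/2}$ reproduces $\sum_{r\geq 0}\beta_r K^{-r-1/2}$.

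The main technical obstacle is the justification of the $t\to 0+$ passage in $\widetilde{\eta}_{k,r}$: for $2r+\iota\geq 1$ the series diverges at $t = 0$, so one cannot simply set $t = 0$. The resolution is that \cref{lem:asymp-L} supplies the full asymptotic expansion in $t$, whose $j\geq 1$ terms are $O(t^j)$ and vanish in the limit, leaving exactly the constant term $L(-(2r+\iota),\,\cdot\,)$ identified above. The remainder of the proof is a careful bookkeeping exercise in gathering signs, branch choices, and powers of $2P$, $\pi$, and $i$ to match the stated coefficients $\alpha_{k,r}(m)$ and $\beta_r$.
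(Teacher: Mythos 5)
Your proposal follows the paper's proof essentially step for step: the same starting identity \eqref{Psihat-WRT}, the same evaluation of \cref{thm:S-trans-WRT} at $\tau=-K+it$ with $\sqrt{-K+i0^+}=i\sqrt{K}$ and $\sgn(\Re\tau)=-1$, the same reduction of $\lim_{t\to 0+}\widetilde{\eta}_{k,r}(-K+it)$ to Bernoulli sums via \cref{lem:asymp-L} and \eqref{eq:L-val}, and the same expansion of the Eichler-type integral in powers of $K^{-r}$. All of your bookkeeping identities (the simplification of $d_{\kappa,\iota,r}/((2\pi i)^{\kappa}(2r+\iota+1))$, the prefactor $(-1)^n\sqrt{2}/(4i)$, the value of $\binom{-1/2}{r}$) check out against the stated $\alpha_{k,r}(m)$ and $\beta_r$.

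The one step that would fail as literally written is the treatment of the integral term: the binomial series $\frac{1}{\sqrt{y+i/K}}=\frac{1}{\sqrt{y}}\sum_{r\ge 0}\binom{-1/2}{r}\bigl(\tfrac{i}{Ky}\bigr)^{r}$ converges only for $y>1/K$, so it cannot be ``integrated termwise'' over $(0,\infty)$; moreover the resulting series in $K^{-r}$ is only an asymptotic expansion, so no convergent termwise identity is available. The paper instead truncates at order $N$ using Taylor's theorem with the remainder bound $\bigl\lvert \tfrac{1}{\sqrt{y+i/K}}-\sum_{r=0}^{N}\tfrac{(2r-1)!!(-i)^r}{2^r y^{r+1/2}}\tfrac{K^{-r}}{r!}\bigr\rvert\le C_N K^{-N-1} y^{-N-3/2}$, valid for all $y>0$, and then uses the exponential decay of $\eta_{k+1}(iy)$ at both endpoints to make $\int_0^\infty \lvert\eta_{k+1}(iy)\rvert\, y^{-N-3/2}\,dy$ finite, which yields the asymptotic statement for every $N$. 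Replacing your termwise-integration sentence by this finite expansion with controlled error closes the gap; everything else in your argument coincides with the paper's proof.
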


We remark that the set
\[
\left\{ \left[ \frac{-m^2}{4P} \right] \in \mathbb{Q}/\mathbb{Z} \mid \text{$m$ is divisible by at most $n-3$ of the $p_j$'s} \right\}
\]
is the set of non-zero Chern-Simons invaraints~\cite{AndersenMistegard2018}.
Since $\alpha_{k,r}(m) = 0$ for $m$ that is divisible by at least $n-2$ of the $p_j$'s by \cref{CK-sincosprod}, this means that we solve explicitly $G = \SL(2,\mathbb{C})$ version of the following asymptotic expansion conjecture by Andersen.

\begin{conj}[{Asymptotic Expansion Conjecture~\cite[Conjecture 1.1]{Andersen2013}}]
	By $\mathcal{Z}_G^{(K)}$ we will denote the Reshetikhin-Turaev TQFT at level $K$ for the semisimple, simply connected Lie group $G$. There exist constants (depend on an oriented compact three manifold $X$) $d_{j,r} \in \mathbb{Q}$ and $b_{j,r} \in \mathbb{C}$ for $r=1,\dots, u_j$, $j=0,1,\dots, n$ and $a_{j,r}^l \in \mathbb{C}$ for $j=0, 1, \dots, n$, $l = 1, 2, \dots$ such that the asymptotic expansion of $\mathcal{Z}_G^{(K)}(X)$ in the limit $K \to \infty$ is given by
	\[
		\mathcal{Z}_G^{(K)}(X) \sim \sum_{j=0}^n e^{2\pi i K q_j} \sum_{r=1}^{u_j} K^{d_{j,r}} b_{j,r} \left(1 + \sum_{l=1}^\infty a_{j,r}^l K^{-l} \right),
	\]
	where $q_0 = 0, q_1, \dots, q_n$ are the finitely many different values of the Chern-Simons functional on the space of flat $G$-connections on $X$.
\end{conj}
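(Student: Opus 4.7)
The plan is to combine \eqref{Psihat-WRT} with \cref{thm:S-trans-WRT} evaluated in the radial limit. Using $\xi_K^{1/2}-\xi_K^{-1/2}=2i\sin(\pi/K)$, equation \eqref{Psihat-WRT} rearranges to
\[
\sqrt{2/K}\,\xi_K^{\tfrac{1}{4}\Theta_0}\sin(\pi/K)\,\tau_K \;=\; \frac{(-1)^{n+1}\,i\sqrt{2}}{4}\,K^{-1/2}\bigl(\widehat{\Psi}(1/K)+\mathcal{P}(\xi_K)\bigr),
\]
so the $\mathcal{P}(\xi_K)$ summand already accounts for the last term of the statement. For $\widehat{\Psi}(1/K)$ I write $\widehat{\Psi}(1/K)=\lim_{s\to 0^+}\widehat{\Psi}(-1/\tau)$ along $\tau=-K+is$ and apply \cref{thm:S-trans-WRT}. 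Under the branch $\arg\sqrt{z}\in(-\pi/2,\pi/2]$ one has $\tau^{1/2}\to i\sqrt{K}$, whence $\tau^{\kappa+\iota+r+1/2}\to i(-1)^{\kappa+\iota+r}K^{\kappa+\iota+r+1/2}$, and $\sgn(\Re\tau)=-1$.

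For the $\widetilde{\eta}_{k,r}(\tau)$ contribution, I swap the $\nu$-sum with the lattice sum defining each $\widetilde{\theta}_{2r+\iota,\nu/\sqrt{2P}}(\tau)$ and fold $m<0$ onto $m>0$ using $(-1)^{2r+\iota}=(-1)^{k}$, obtaining
\[
\widetilde{\eta}_{k,r}(-K+is)=-\frac{(2P)^{-(2r+\iota)/2}}{2}\sum_{m>0} m^{2r+\iota}D_{k}(m)\,e^{-m^{2}t},\qquad t:=\frac{\pi s}{2P},
\]
with $D_{k}(m):=C_{k,K}(m)-(-1)^{k}C_{k,K}(-m)$. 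Since $D_{k}$ is $2P$-periodic and has zero mean, \cref{lem:asymp-L} gives the radial limit $L(-(2r+\iota),D_{k})$ as $s\to 0^{+}$; substituting the Bernoulli-polynomial value \eqref{eq:L-val}, reassembling with the prefactor from \cref{thm:S-trans-WRT} and the outer factor $\frac{(-1)^{n+1}i\sqrt{2}}{4}K^{-1/2}$, and simplifying $d_{\kappa,\iota,r}/(2\pi i)^{\kappa}=k!/\bigl((4\pi i)^{\kappa-r}(\kappa-r)!(2r+\iota)!\bigr)$ (absorbing the extra $(2r+\iota+1)$ from \eqref{eq:L-val} into $(2r+\iota+1)!$) yields the first sum $\sum_{m=1}^{2P}e^{-2\pi i m^{2}K/(4P)}\sum_{k,r}\alpha_{k,r}(m)K^{\kappa+\iota+r}$ with the stated $\alpha_{k,r}(m)$; the sign $(-1)^{\kappa+r+n+1}$ arises from the product $-\sgn(\Re\tau)\cdot(-1)^{\iota}\cdot i\cdot(-1)^{\kappa+\iota+r}\cdot i\cdot(-1)^{n+1}$.

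For the integral contribution in \cref{thm:S-trans-WRT}, I parametrize $z=iy$ with $y>0$ and take $\tau=-K$, so the integrand becomes $i\,\eta_{k+1}(iy)/\sqrt{y+i/K}$. The binomial expansion
\[
\frac{1}{\sqrt{y+i/K}}=\frac{1}{\sqrt{y}}\sum_{r=0}^{\infty}\binom{-1/2}{r}\!\left(\frac{i}{yK}\right)^{\!r},\qquad \binom{-1/2}{r}=\frac{(-1)^{r}(2r-1)!!}{2^{r}r!},
\]
followed by term-by-term integration yields the $\beta_{r}K^{-r-1/2}$ part. Term-by-term integration is legitimate because the integrals $\int_{0}^{\infty}\eta_{k+1}(iy)y^{-r-1/2}\,dy$ converge: at $y\to\infty$ the Fourier expansion of $\eta_{k+1}$ gives exponential decay, while at $y\to 0$ the zero-mean property \eqref{ep-sum-vanish} kills the otherwise divergent $\theta_{k+1,0}$ contribution and \cref{theta-S-transform} then provides exponential decay at the cusp $0$ for the remaining theta pieces. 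Multiplying by $i(2P)^{k/2}$ from \cref{thm:S-trans-WRT} and the outer $\frac{(-1)^{n+1}i\sqrt{2}}{4}K^{-1/2}$, and using $\sqrt{2}/2^{r+2}=1/2^{r+3/2}$, produces $\beta_{r}$ in the stated form.

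The main obstacle is not the overall strategy, which is essentially forced by \cref{thm:S-trans-WRT}, but the careful bookkeeping of branch choices (in particular $(-i)^{1/2}=i^{-1/2}$ and the behaviour of $\tau^{1/2}$ as $\tau$ approaches the negative real axis from above), the parity identifications $(-1)^{\iota}=(-1)^{k}$, and the combinatorial constants inside $d_{\kappa,\iota,r}$ and $\binom{-1/2}{r}$. The only genuinely non-routine analytic point is promoting the binomial expansion of the integral to a rigorous asymptotic expansion in $K$: this requires a Watson-type remainder bound for $\int_{0}^{\infty}\eta_{k+1}(iy)\bigl((y+i/K)^{-1/2}-\sum_{r=0}^{N}\binom{-1/2}{r}(i/(yK))^{r}y^{-1/2}\bigr)dy$ of order $O(K^{-N-1})$, which follows from the dual exponential decay of $\eta_{k+1}(iy)$ at the two endpoints.
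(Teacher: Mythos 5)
The statement you are asked to prove is Andersen's Asymptotic Expansion Conjecture in its full generality: an arbitrary oriented compact three-manifold $X$, an arbitrary semisimple simply connected $G$, and phases $q_j$ ranging over \emph{all} values of the Chern--Simons functional on flat $G$-connections. The paper does not prove this statement and does not claim to; it is quoted verbatim as a conjecture from \cite{Andersen2013}, and the paper's contribution is to verify a version of it (for $G=\SL(2,\mathbb{C})$) only for Seifert fibered integral homology $3$-spheres, via \cref{thm:asymp-Andersen} and \cref{CS}. What you have written is, in substance, a proof of \cref{thm:asymp-Andersen}: your bookkeeping of \eqref{Psihat-WRT}, the radial limit of \cref{thm:S-trans-WRT} along $\tau=-K+is$, the evaluation of $\lim_{s\to 0^+}\widetilde{\eta}_{k,r}(-K+is)$ through \cref{lem:asymp-L} and \eqref{eq:L-val}, and the Watson-type remainder estimate for the Eichler-type integral all match the paper's own argument for that theorem, including the constants. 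But that theorem is not the conjecture, and no argument of this type can yield the conjecture as stated, since the entire method rests on the false theta function decomposition of $\Phi(q)$ (\cref{Psi-P=theta-tilde}) and on \cref{thm:q-series-and-quantum-invariant}, both of which are specific to this class of manifolds.

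Even if one charitably reads the statement as ``the conjecture holds for Seifert fibered integral homology $3$-spheres,'' your argument is still incomplete: after obtaining the expansion of \cref{thm:asymp-Andersen}, one must identify the exponentials $e^{2\pi i(-m^2K/4P)}$ that actually occur with $e^{2\pi i q_j K}$ for $q_j$ Chern--Simons invariants of flat connections. This requires showing that $\alpha_{k,r}(m)=0$ whenever $m$ is divisible by at least $n-2$ of the $p_j$'s (\cref{CK-sincosprod}, via the product of sines hidden in $\widetilde{C}_k(m)$), and then matching the surviving classes $[-m^2/4P]\in\mathbb{Q}/\mathbb{Z}$ with the Chern--Simons values of the nontrivial flat $\SL(2,\mathbb{C})$-connections as in \cref{CS} and \cite{AndersenMistegard2018}. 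Your proposal stops at the raw exponential sum over $m=1,\dots,2P$ and never addresses this identification, which is precisely the step that turns the analytic expansion into a statement about the conjecture's $q_j$'s.
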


For the proof of this theorem, we prepare the following lemma.

\begin{lem}
	For $k=2\kappa+\iota$ as before,
	\begin{align*}
		&\lim_{t \to 0+} \widetilde{\eta}_{k,r}(-K+it)\\
		&= \frac{(2P)^{r+\iota/2}}{2(2r+\iota+1)} \sum_{m=1}^{2P} \bigg(C_{k,K}(m) - (-1)^k C_{k,K}(-m) \bigg) B_{2r+\iota+1} \left(\frac{m}{2P}\right).
	\end{align*}
\end{lem}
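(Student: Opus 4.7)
The plan is to substitute $\tau = -K+it$ into the definition of $\widetilde{\eta}_{k,r}$, identify the resulting expression with a weighted sum to which \cref{lem:asymp-L} applies, and then evaluate the limit using \eqref{eq:L-val}.

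First I unpack $\widetilde{\theta}_{2r+\iota,\nu/\sqrt{2P}}(\tau) = (2P)^{-(2r+\iota)/2}\sum_{m\equiv \nu\,(2P)} \sgn(m)\, m^{2r+\iota}\, q^{m^2/(4P)}$ using the lattice $L = \sqrt{2P}\mathbb{Z}$, and observe that at $\tau = -K+it$ one has $q^{m^2/(4P)} = e^{-2\pi i K m^2/(4P)}\, e^{-tm^2/(2P)}$. Since the phase factor is $2P$-periodic in $m$, I can interchange the $\nu$- and $m$-sums and replace $e^{2\pi i \ell \nu/(2P)}$ by $e^{2\pi i \ell m/(2P)}$ using the congruence $m \equiv \nu \pmod{2P}$. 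The $\bm{\varepsilon}$-sum together with these phases is then exactly the periodic function $C_{k,K}(m)$, yielding
\[
\widetilde{\eta}_{k,r}(-K+it) = -\frac{(2P)^{-(2r+\iota)/2}}{2}\sum_{m\in\mathbb{Z}} \sgn(m)\, m^{2r+\iota}\, C_{k,K}(m)\, e^{-tm^2/(2P)}.
\]

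Next, splitting this sum at $m=0$ and reindexing $m \mapsto -m$ on the negative part produces a factor $(-1)^{2r+\iota} = (-1)^k$ (since $k = 2\kappa+\iota$), so the sum consolidates to $\sum_{m\geq 1} m^{2r+\iota} D(m)\, e^{-tm^2/(2P)}$ where $D(m) := C_{k,K}(m) - (-1)^k C_{k,K}(-m)$. The periodic function $D$ has vanishing mean because $C_{k,K}$ does, so \cref{lem:asymp-L} applies (the harmless rescaling $t \mapsto t/(2P)$ does not affect the leading term) and the $t \to 0+$ limit equals $L(-(2r+\iota), D)$. Finally, invoking \eqref{eq:L-val} with $M = 2P$ and exponent $2r+\iota$ evaluates this $L$-value as $-\frac{(2P)^{2r+\iota}}{2r+\iota+1}\sum_{m=1}^{2P} D(m)\, B_{2r+\iota+1}(m/(2P))$, and combining the two prefactors produces exactly the asserted factor $(2P)^{r+\iota/2}/(2(2r+\iota+1))$.

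The argument is essentially careful bookkeeping: no deep idea intervenes once \cref{lem:asymp-L} and \eqref{eq:L-val} are in hand. The only potential pitfall is tracking the parity factor, which comes out to be $(-1)^k$ rather than $(-1)^\iota$, an identity that is immediate from $k = 2\kappa + \iota$.
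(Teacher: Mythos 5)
Your proposal is correct and follows essentially the same route as the paper: unpack $\widetilde{\eta}_{k,r}$ into a single sum weighted by $C_{k,K}$, fold the negative $m$'s into $C_{k,K}(m)-(-1)^{\iota}C_{k,K}(-m)$ (which equals your $(-1)^k$ version), and apply \cref{lem:asymp-L} together with \eqref{eq:L-val}. The only slip is the exponent, which should be $e^{-\pi t m^2/(2P)}$ rather than $e^{-t m^2/(2P)}$, but this is harmless since the $t\to 0+$ limit picks out only the constant term of the asymptotic expansion.
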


\begin{proof}
	By the definitions,
	\begin{align*}
		\widetilde{\eta}_{k,r}(-K+it) &= -\frac{1}{2 (2P)^{r+\iota/2}} \sum_{m \in \mathbb{Z}} \sgn(m) C_{k,K}(m) m^{2r+\iota}  e^{- m^2 \frac{\pi t}{2P}}\\
			&= -\frac{1}{2 (2P)^{r+\iota/2}} \sum_{m=1}^\infty \bigg(C_{k,K}(m) - (-1)^\iota C_{k,K}(-m) \bigg) m^{2r+\iota} e^{-m^2 \frac{\pi t}{2P}}.
	\end{align*}
	Applying \cref{lem:asymp-L} and \eqref{eq:L-val} to this equation yields the limit formula.
\end{proof}

\begin{proof}[Proof of \cref{thm:asymp-Andersen}]
	By \cref{thm:S-trans-WRT} and the above lemma, we have
	\begin{align*}
		&\widehat{\Psi} \left(\frac{1}{K} \right) = \lim_{t \to 0+} \widehat{\Psi} \left(-\frac{1}{-K + it} \right)\\
		&= \sum_{k=0}^{n-3} \sum_{r=0}^\kappa \frac{(-1)^{\kappa+r+1} i^{1/2} (2P)^{\frac{k+2r+\iota-1}{2}} }{2 (4\pi i)^{\kappa-r} (2r+\iota+1)!} \frac{k!}{(\kappa-r)!} K^{\kappa+\iota+r+\frac{1}{2}}\\
		&\qquad \times \sum_{m=1}^{2P} e^{2\pi i \frac{-m^2 K}{4P}} \bigg(\widetilde{C}_k(m) - (-1)^k \widetilde{C}_k(-m) \bigg) B_{2r+\iota+1} \left(\frac{m}{2P}\right)\\
		&\quad - \sum_{k=0}^{n-3} (2P)^{k/2} \int_0^{\infty} \frac{\eta_{k+1}(iy)}{\sqrt{-i(iy-1/K)}} dy.
	\end{align*}
	As for the integral part, by Taylor's theorem, for any $N \in \mathbb{Z}_{\geq 0}$ and a fixed $y > 0$, we have
	\begin{align*}
		\left\lvert \frac{1}{\sqrt{-i(iy-1/K)}} - \sum_{r=0}^N \frac{(2r-1)!! (-i)^r}{2^r y^{r+1/2}} \frac{K^{-r}}{r!} \right\rvert \leq C_N \frac{K^{-N-1}}{y^{N+3/2}}
	\end{align*}
	for some constant depending on $N$. Then, since
	\begin{align*}
		&\left\lvert \int_0^{\infty} \frac{\eta_{k+1}(iy)}{\sqrt{-i(iy-1/K)}} dy - \sum_{r=0}^N \frac{(2r-1)!! (-i)^r}{2^r r!} \int_0^{\infty} \frac{\eta_{k+1}(iy)}{y^{r+1/2}} dy \cdot K^{-r} \right\rvert\\
		&\leq C_N \int_0^\infty \frac{\lvert \eta_{k+1}(iy) \rvert}{y^{N+\frac{3}{2}}} dy \cdot K^{-N-1},
	\end{align*}
	we obtain
	\begin{align*}
		\int_0^{\infty} \frac{\eta_{k+1}(iy)}{\sqrt{-i(iy-1/K)}} dy \sim \sum_{r=0}^\infty \frac{(2r-1)!! (-i)^r}{2^r r!} \int_0^{\infty} \frac{\eta_{k+1}(iy)}{y^{r+1/2}} dy \cdot K^{-r}.
	\end{align*}
	In conclusion, \cref{thm:asymp-Andersen} follows from \eqref{Psihat-WRT}.
\end{proof}

As a corollary, the leading term of the asymptotic expansion gives the asymptotic formula for the WRT invariant.
\begin{cor}\label{Hikami}
	The asymptotic formula of the WRT invariant $\tau_K$ in a limit $K \to \infty$ is given by
	\begin{align*}
		\sqrt{2/K} \sin (\pi/K) \tau_K &\sim K^{n-3} \frac{2^{n-2}}{(n-2)! \sqrt{P}} \xi_K^{-\frac{1}{4} \Theta_0} e^{-\frac{2n-3}{4} \pi i} \\
		&\quad \times \sum_{m=1}^{2P} (-1)^{mn} e^{2\pi i \frac{-m^2K}{4P}} B_{n-2} \left(\frac{m}{2P} \right) \prod_{j=1}^n \sin \left(\frac{\pi m}{p_j} \right).
	\end{align*}
\end{cor}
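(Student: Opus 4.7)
The plan is to read off the leading-order contribution as $K \to \infty$ from the full asymptotic expansion in \cref{thm:asymp-Andersen} and match it against the claimed formula. Dividing both sides of \cref{thm:asymp-Andersen} by $\xi_K^{\Theta_0/4}$, I first identify which of the three groups of terms on the right-hand side dominates. The series $\sum_r \beta_r K^{-r-1/2}$ contributes only terms of order $K^{-1/2}$ or smaller, and the final term is $O(K^{-1/2})$ since $\mathcal{P}(\xi_K)$ is bounded in $K$. Thus the leading behaviour must come from the double sum $\sum_{k,r}\alpha_{k,r}(m) K^{\kappa+\iota+r}$. With the constraints $k=2\kappa+\iota$, $0\le r\le\kappa$, $0\le k\le n-3$, the exponent $\kappa+\iota+r$ is maximized at $r=\kappa$ and $k=n-3$, producing $K^{n-3}$; all other choices strictly drop in order.

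Next I specialize the formula for $\alpha_{k,r}(m)$ to $k=n-3$ and $r=\kappa$. The factorials collapse nicely: $(2r+\iota+1)!=(n-2)!$, $k!/(\kappa-r)!=(n-3)!$, and $(4\pi i)^{\kappa-r}=1$, while the power of $2P$ reduces to $(2P)^{n-7/2}$ and the Bernoulli polynomial becomes $B_{n-2}(m/(2P))$. For the combination $\widetilde C_{n-3}(m)-(-1)^{n-3}\widetilde C_{n-3}(-m)$, the symmetry $\widetilde C_{n-3}(-m)=(-1)^n\widetilde C_{n-3}(m)$ (which follows from $\sin(-x)=-\sin(x)$ together with \cref{CK-sinproduct}) makes the two contributions equal, giving a factor of $2\widetilde C_{n-3}(m)$. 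Inserting the explicit sine-product expression from \cref{CK-sinproduct} cancels the $(2P)^{n-3}$ and $(n-3)!$ in the denominator and produces the desired product $\prod_{j=1}^n\sin(\pi m/p_j)$ together with the sign $(-1)^{mn}$.

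The remaining work is to reorganize the resulting constant. Combining the prefactors collected so far yields
\[
\alpha_{n-3,\kappa}(m)= \frac{-(-1)^n(-1)^{mn}\,2^{n-2}\,i^{n-1/2}}{(n-2)!\sqrt{P}}\,B_{n-2}\!\left(\frac{m}{2P}\right)\prod_{j=1}^n\sin\!\left(\frac{\pi m}{p_j}\right),
\]
so the task reduces to verifying the phase identity $-(-1)^n\,i^{n-1/2}=e^{-(2n-3)\pi i/4}$. Writing $i^{n-1/2}=e^{i\pi(n-1/2)/2}$ and $-(-1)^n=e^{i\pi(n+1)}$, the total argument is $\pi(n+1)+\pi(2n-1)/4=\pi(6n+3)/4$, which agrees with $-(2n-3)\pi/4$ modulo $2\pi$. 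Substituting back, multiplying by $K^{n-3}\xi_K^{-\Theta_0/4}\sum_{m=1}^{2P}e^{-2\pi i m^2 K/(4P)}(\cdots)$, and confirming that the discarded terms are genuinely of lower order in $K$ gives the claimed asymptotic.

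The main obstacle, minor but the only non-mechanical step, is tracking the branch of $i^{-1/2}$ from the $S$-transformation consistently through the reduction and verifying the phase matches $e^{-(2n-3)\pi i/4}$; all the rest is substitution into \cref{thm:asymp-Andersen} and \cref{CK-sinproduct} followed by order-of-magnitude bookkeeping.
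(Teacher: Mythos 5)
Your proposal is correct and follows the same route as the paper: isolate the unique top-order term $k=n-3$, $r=\kappa=\lfloor(n-3)/2\rfloor$ of \cref{thm:asymp-Andersen}, use the symmetry $\widetilde{C}_{n-3}(-m)=(-1)^n\widetilde{C}_{n-3}(m)$ together with \cref{CK-sinproduct}, and simplify the constant to $e^{-\frac{2n-3}{4}\pi i}$. Your phase bookkeeping (including the branch $i^{-1/2}=e^{-\pi i/4}$) matches the paper's computation of $\alpha_{n-3,\lfloor\frac{n-3}{2}\rfloor}(m)$.
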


\begin{proof}
	The right-hand side of \cref{thm:asymp-Andersen} is asymptotically equivalent to the term with $K^{n-3}$, that is,
	\begin{align*}
		\sqrt{2/K} \xi_K^{\frac{1}{4} \Theta_0} \sin(\pi/K) \tau_K \sim \sum_{m=1}^{2P} e^{2\pi i \frac{-m^2 K}{4P}} \alpha_{n-3, \lfloor \frac{n-3}{2} \rfloor} (m) K^{n-3}.
	\end{align*}
	Since $\widetilde{C}_{n-3}(-m) = (-1)^n \widetilde{C}_{n-3}(m)$ and \cref{CK-sinproduct}, we have
	\begin{align*}
		\alpha_{n-3,\lfloor \frac{n-3}{2} \rfloor}(m) 
		&= \frac{2^{n-2}}{(n-2)! \sqrt{P}} e^{-\frac{2n-3}{4}\pi i} (-1)^{mn}B_{n-2} \left(\frac{m}{2P}\right) \prod_{j=1}^n \sin \left(\frac{\pi m}{p_j} \right),
	\end{align*}
	which concludes the proof.
\end{proof}

We remark that the formula in \cref{Hikami} coincides with a formula given by Hikami in~\cite[Proposition 4]{Hikami2006} based on Lawrence-Rozansky~\cite{LawrenceRozansky1999}. 

Finally, we rewrite \cref{Hikami} in terms of Chern-Simons invariants for flat connections of the Seifert manifolds. Hereafter, following the notations used in~\cite{AndersenMistegard2018}, we assume that $p_2, \dots, p_n$ in $(p_1, p_2, \dots, p_n)$ are odd by taking a permutation if needed. 

\begin{thm}\label{CS}
	The asymptotic formula of the WRT invariant $\tau_K$ in a limit $K \to \infty$ is given by
\begin{align*}
	&\sqrt{2/K} \sin (\pi/K) \tau_K \\
	&\quad \sim K^{n-3} \frac{2^{n-2}}{(n-2)! \sqrt{P}} \xi_K^{-\frac{1}{4} \Theta_0} e^{-\frac{2n-3}{4} \pi i} \sum_{\bm{l} \in L}
	e^{2\pi i \mathrm{CS}(\bm{l})K}\\
			&\qquad \times (-1)^{l_1} \sin \left(\pi l_1 \frac{P}{p_1^2} \right) \prod_{j=2}^n \sin \left(2\pi l_j \frac{P}{p_j^2} \right)\\
			&\qquad \times \sum_{(\varepsilon_1, \dots, \varepsilon_n) \in \{\pm 1\}^n} \varepsilon_1 \cdots \varepsilon_n \overline{B}_{n-2} \left(\frac{\varepsilon_1 l_1}{2p_1} + \sum_{j=2}^n \frac{\varepsilon_j l_j}{p_j} \right),
\end{align*}
where $L = L(p_1,\dots,p_n)$ is a subset of
\[
    \left\{(l_1,\dots,l_n) \in \mathbb{Z}^n \mid 0 \leq l_1 \leq p_1, 0 \leq l_j \leq \frac{p_j-1}{2}\quad (\text{for }j=2, \dots, n)\right\}
\]
such that there are at least three non-zero components $l_j$, and $\mathrm{CS}(\bm{l})$ is the Chern-Simons invariant for $\bm{l}$. 
The set $L$ is isomorphic to the set of connected components of the moduli space of non-trivial $\SL(2,\mathbb{C})$-flat connections on the Seifert fibered integral homology sphere. 
\end{thm}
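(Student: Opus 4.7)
The plan is to derive \cref{CS} from \cref{Hikami} by a Chinese Remainder Theorem (CRT) reindexing of the sum over $m \in \{1,\dots,2P\}$, together with the identification of the quadratic Gauss phase $-m^2/(4P)$ with Chern--Simons invariants of $\SL(2,\mathbb{C})$-flat connections.

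Since $p_2,\dots,p_n$ are odd, the moduli $2p_1, p_2,\dots,p_n$ are pairwise coprime, so CRT gives $\mathbb{Z}/2P\mathbb{Z}\cong\mathbb{Z}/2p_1\mathbb{Z}\times\prod_{j=2}^n\mathbb{Z}/p_j\mathbb{Z}$. I would parameterize each residue class by writing
\[
m \equiv \varepsilon_1 l_1 (P/p_1) + 2\sum_{j=2}^n \varepsilon_j l_j (P/p_j) \pmod{2P},
\]
with $\bm{\varepsilon}\in\{\pm 1\}^n$ and $\bm{l}=(l_1,\dots,l_n)$ in the fundamental domain $0 \le l_1 \le p_1$, $0 \le l_j \le (p_j-1)/2$ for $j \ge 2$. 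A counting argument shows that on the interior (where no $l_j$ vanishes and $l_1\ne p_1$) this is a bijection onto those $m$'s for which $\prod_j\sin(\pi m/p_j)\ne 0$. Tuples on the boundary correspond to $m$'s with vanishing sine product, so they contribute $0$ in \cref{Hikami}; on the other side, the factors $\sin(\pi l_1 P/p_1^2)$ and $\sin(2\pi l_j P/p_j^2)$ likewise vanish there, which is precisely why $L$ is described by the condition ``at least three non-zero components.''

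Under the above substitution I need to carry out four computations. First, the Gauss phase expands as
\[
-\frac{m^2}{4P} \equiv -\frac{l_1^2\, P}{4 p_1^2} - \sum_{j=2}^n \frac{l_j^2\, P}{p_j^2} \pmod 1,
\]
since every cross term has the shape $\varepsilon_i\varepsilon_j l_i l_j P/(p_i p_j)$ with $P/(p_i p_j)\in\mathbb{Z}$; this $\bm{\varepsilon}$-independent right-hand side is $\mathrm{CS}(\bm{l})$, matching the description in~\cite{AndersenMistegard2018}. Second, using that $P/p_1=p_2\cdots p_n$ is odd, a direct reduction modulo $2p_1$ and modulo $2p_j$ respectively gives $\sin(\pi m/p_1)=\varepsilon_1\sin(\pi l_1 P/p_1^2)$ and $\sin(\pi m/p_j)=(-1)^{l_1}\varepsilon_j\sin(2\pi l_j P/p_j^2)$ for $j\ge 2$, where the parity factor comes from $P/(p_1 p_j)$ being odd. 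Third, $(-1)^{mn}=(-1)^{n l_1}$ since $m$ has the parity of $l_1$; combining with the $(-1)^{(n-1)l_1}$ produced by the sine computation yields the overall sign $(-1)^{l_1}\varepsilon_1\cdots\varepsilon_n$ appearing in \cref{CS}. Fourth, the Bernoulli factor becomes $B_{n-2}(m/(2P))=\overline{B}_{n-2}\bigl(\tfrac{\varepsilon_1 l_1}{2 p_1}+\sum_{j\ge 2}\tfrac{\varepsilon_j l_j}{p_j}\bigr)$ by periodicity of $\overline{B}_{n-2}$. Substituting these four ingredients into \cref{Hikami} and rearranging the sum as $\sum_{\bm{l}\in L}\sum_{\bm{\varepsilon}}$ produces the claimed formula.

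I expect the main obstacle to be the careful bookkeeping of sign factors: verifying that the CRT representative $m$ reduces correctly modulo each $2p_j$, tracking the parity of $P/(p_1 p_j)$ and of $m$ itself, and checking that the various signs collapse cleanly into $(-1)^{l_1}\varepsilon_1\cdots\varepsilon_n$. Once this is complete, the final identification of $L$ with the set of connected components of the moduli space of non-trivial $\SL(2,\mathbb{C})$-flat connections is a standard matching against the description of Seifert character varieties recorded in~\cite{AndersenMistegard2018}.
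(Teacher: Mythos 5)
Your proposal is correct and takes essentially the same route as the paper: both derive the statement from \cref{Hikami} by the Chinese remainder theorem reindexing of the sum over $m \bmod 2P$, use the $\bm{\varepsilon}$-independence of the quadratic phase $-m^2/4P \bmod 1$ to fold the sum onto the fundamental domain, observe that boundary tuples drop out via the sine factors, and defer the identification of the phases with Chern--Simons invariants and of $L$ with the components of the flat moduli space to Andersen--Misteg{\aa}rd. The only difference is organizational: you build $\bm{\varepsilon}$ into the CRT parameterization from the outset and spell out the sine/parity bookkeeping that the paper leaves implicit.
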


\begin{proof}
By the Chinese remainder theorem, we have $\mathbb{Z}/2p_1\mathbb{Z} \times \mathbb{Z}/p_2\mathbb{Z} \times \cdots \times \mathbb{Z}/p_n \mathbb{Z} \cong \mathbb{Z}/2P\mathbb{Z};$
\[
	(l_1, l_2, \dots, l_n) \mapsto P \left(\frac{l_1}{p_1} + \sum_{j=2}^n \frac{2l_j}{p_j} \right) \pmod{2P}.
\]
In \cref{Hikami}, setting $m=P\left(\frac{l_1}{p_1} + \sum_{j=2}^n \frac{2l_j}{p_j} \right)$ gives
\begin{align*}
	&\sqrt{2/K} \sin (\pi/K) \tau_K\\
	&\quad \sim K^{n-3} \frac{2^{n-2}}{(n-2)! \sqrt{P}} \xi_K^{-\frac{1}{4} \Theta_0} e^{-\frac{2n-3}{4} \pi i} \sum_{l_1=0}^{2p_1-1} \sum_{l_2 =0}^{p_2 -1} \cdots \sum_{l_n =0}^{p_n-1} e^{2\pi i \frac{- \left(P \left(\frac{l_1}{p_1} + \sum_{j=2}^n \frac{2l_j}{p_j} \right)\right)^2K}{4P}}\\
			&\qquad \times (-1)^{l_1} \overline{B}_{n-2} \left(\frac{l_1}{2p_1} + \sum_{j=2}^n \frac{l_j}{p_j} \right) \sin \left(\pi l_1 \frac{P}{p_1^2} \right) \prod_{j=2}^n \sin \left(2\pi l_j \frac{P}{p_j^2} \right),
\end{align*}
where $\overline{B}_n(x) = B_n(x-\lfloor x \rfloor)$. Finally, we use the fact that for each $(l_1, l_2, \dots, l_n) \in \mathbb{Z}/2p_1\mathbb{Z} \times \mathbb{Z}/p_2 \mathbb{Z} \times \cdots \times \mathbb{Z}/p_n \mathbb{Z}$, the value
	\[
		\left(P \left(\frac{\varepsilon_1 l_1}{p_1} + \sum_{j=2}^n \frac{2\varepsilon_j l_j}{p_j} \right) \right)^2 \pmod{4P}
	\]
	is independent of the choice of $\bm{\varepsilon} = (\varepsilon_1, \varepsilon_2, \dots, \varepsilon_n) \in \{\pm 1\}^n$.
	This fact can be easily checked as follows: Fix $(l_1, \dots, l_n)$ and take any $\bm{\varepsilon}, \bm{\varepsilon}' \in \{\pm 1\}^n$. Then we have
	\begin{align*}
		&\left(P \left(\frac{\varepsilon_1 l_1}{p_1} + \sum_{j=2}^n \frac{2\varepsilon_j l_j}{p_j} \right) \right)^2 - \left(P \left(\frac{\varepsilon'_1 l_1}{p_1} + \sum_{j=2}^n \frac{2\varepsilon'_j l_j}{p_j} \right) \right)^2\\
		&\quad = P^2 \left(\frac{(\varepsilon_1 + \varepsilon'_1) l_1}{p_1} + 2\sum_{\substack{2 \leq j \leq n \\ \varepsilon_j = \varepsilon'_j}} \frac{2\varepsilon_j l_j}{p_j} \right) \left(\frac{(\varepsilon_1 - \varepsilon'_1) l_1}{p_1} + 2\sum_{\substack{2 \leq j \leq n \\ \varepsilon_j \neq \varepsilon'_j}} \frac{2\varepsilon_j l_j}{p_j} \right) \equiv 0
	\end{align*}
	$\mod{4P}$. Therefore, we have
	\begin{align*}
	&\sqrt{2/K} \sin (\pi/K) \tau_K\\
	&\quad \sim K^{n-3} \frac{2^{n-2}}{(n-2)! \sqrt{P}} \xi_K^{-\frac{1}{4} \Theta_0} e^{-\frac{2n-3}{4} \pi i} \sum_{l_1=1}^{p_1-1} \sum_{l_2 =1}^{\frac{p_2 -1}{2}} \cdots \sum_{l_n =1}^{\frac{p_n-1}{2}} e^{2\pi i \frac{-\left(P \left(\frac{l_1}{p_1} + \sum_{j=2}^n \frac{2l_j}{p_j} \right)\right)^2 K}{4P}}\\
			&\qquad \times (-1)^{l_1} \sum_{(\varepsilon_1, \dots, \varepsilon_n) \in \{\pm 1\}^n} \overline{B}_{n-2} \left(\frac{\varepsilon_1 l_1}{2p_1} + \sum_{j=2}^n \frac{\varepsilon_j l_j}{p_j} \right)\\
			&\qquad \times \varepsilon_1 \cdots \varepsilon_n \sin \left(\pi l_1 \frac{P}{p_1^2} \right) \prod_{j=2}^n \sin \left(2\pi l_j \frac{P}{p_j^2} \right).
\end{align*}
	We remark that the terms corresponding to $l_j = 0$ or $l_1 = p_1$ do not appear due to the $\sin$-terms. Comparing Section 1 in~\cite{AndersenMistegard2018}, we complete the proof.
\end{proof}

\subsection{Examples}
\label{sec:Examples}

\begin{ex}
	For $(p_1, p_2, p_3) = (2,3,5)$,
	\begin{align*}
	&\xi_K^{\frac{181}{120}} (\xi_K^{1/2} - \xi_K^{-1/2})\tau_K \\
	&\quad \sim \frac{1}{\sqrt{5}} \sqrt{\frac{K}{i}} \sum_{l_3 =1}^{2} e^{2\pi i \frac{- \left(30 \left(\frac{1}{2} + \frac{2}{3} + \frac{2l_3}{5}\right)\right)^2K}{120}} \sin \left(2\pi l_3 \frac{6}{5} \right)\\
	&\qquad \times \sum_{(\varepsilon_1, \varepsilon_2, \varepsilon_3) \in \{\pm 1\}^3} \varepsilon_1 \varepsilon_2 \varepsilon_3 \overline{B}_1 \left(\frac{\varepsilon_1}{4} + \frac{\varepsilon_2}{3} + \frac{\varepsilon_3 l_3}{5} \right)\\
	&\quad = \frac{2}{\sqrt{5}} \sqrt{\frac{K}{i}} \left(e^{-\frac{1}{60}\pi iK} \sin \left(\frac{\pi}{5} \right) + e^{-\frac{49}{60} \pi i K} \sin \left(\frac{2\pi}{5} \right) \right),
\end{align*}
which coincides with the leading term in~\cite[(18)]{LawrenceZagier1999}.
\end{ex}

\begin{ex}
	For $(p_1, p_2, p_3) = (4,3,5)$,
	\begin{align*}
	&\xi_K^{\frac{49}{240}} (\xi_K^{1/2} - \xi_K^{-1/2}) \tau_K\\
	&\quad \sim \frac{2}{\sqrt{30}} \sqrt{\frac{K}{i}} \sum_{l_1=1}^{3} \sum_{l_2 =1}^{1} \sum_{l_3 =1}^{2} e^{2\pi i \frac{- \left(60 \left(\frac{l_1}{4} + \frac{2l_2}{3} + \frac{2l_3}{5} \right)\right)^2K}{240}}\\
			&\qquad \times (-1)^{l_1} \sin \left(\pi l_1 \frac{15}{4} \right) \sin \left(2\pi l_2 \frac{20}{3} \right) \sin \left(2\pi l_3 \frac{12}{5} \right)\\
			&\qquad \times \sum_{(\varepsilon_1, \varepsilon_2, \varepsilon_3) \in \{\pm 1\}^3} \varepsilon_1 \varepsilon_2 \varepsilon_3 \overline{B}_1 \left(\frac{\varepsilon_1 l_1}{8} +\frac{\varepsilon_2 l_2}{3} + \frac{\varepsilon_3 l_3}{5} \right)\\
	&\quad = \frac{1}{\sqrt{5}} \sqrt{\frac{K}{i}} \bigg(-\sin \left(\frac{\pi}{5} \right) e^{-\frac{121}{120} \pi iK} + \sin \left(\frac{2\pi}{5} \right) e^{-\frac{49}{120} \pi iK}\\
	&\qquad - \sqrt{2} \sin \left(\frac{2\pi}{5} \right) e^{-\frac{1}{30}\pi iK} + \sqrt{2} \sin \left(\frac{\pi}{5} \right) e^{-\frac{49}{30} \pi iK} \bigg).
\end{align*}
The coefficients of $e^{-\frac{1}{120} \pi i K}$ and $e^{-\frac{169}{120} \pi iK}$ are $0$. This result coincides with the asymptotic formula given in~\cite[p.677]{Hikami2005IJM}.
\end{ex}

In the above example, some coefficients of $e^{2\pi i \mathrm{CS}(l)K}$ vanish. For $n=3$, we can characterize when the coefficients vanish as follows.

\begin{ex}
In \cref{CS}, the terms for $(l_1, l_2, l_3) \in L(p_1,p_2,p_3)$ with $l_1 = p_1$ vanish due to the sin-terms. Thus, we will consider the subset
\begin{align*}
	L' = \left\{(l_1, l_2, l_3) \in \mathbb{Z}^3 \mid 1 \leq l_1 \leq p_1-1, 1 \leq l_2 \leq \frac{p_2-1}{2}, 1 \leq l_3 \leq \frac{p_3-1}{2}\right\}
\end{align*}
of $L(p_1,p_2,p_3)$ below.
The vanishing condition is controlled by the sum of Bernoulli polynomials. In the particular case of $n=3$, we have the following simple formula.
\begin{align*}
	&\sum_{(\varepsilon_1, \varepsilon_2, \varepsilon_3) \in \{\pm 1\}^3} \varepsilon_1 \varepsilon_2 \varepsilon_3 \overline{B}_1 \left(\frac{\varepsilon_1 l_1}{2p_1} + \frac{\varepsilon_2 l_2}{p_2} + \frac{\varepsilon_3 l_3}{p_3} \right)\\
	&= -\sum_{(\varepsilon_1, \varepsilon_2, \varepsilon_3) \in \{\pm 1\}^3} \varepsilon_1 \varepsilon_2 \varepsilon_3 \left\lfloor \frac{\varepsilon_1 l_1}{2p_1} + \frac{\varepsilon_2 l_2}{p_2} + \frac{\varepsilon_3 l_3}{p_3} \right\rfloor = \begin{cases}
			2 &\text{if } (\frac{l_1}{2p_1}, \frac{l_2}{p_2}, \frac{l_3}{p_3}) \in T_{1/2},\\
			0 &\text{if } (\frac{l_1}{2p_1}, \frac{l_2}{p_2}, \frac{l_3}{p_3}) \not\in T_{1/2},
		\end{cases}
\end{align*}
where $T_{1/2}$ is an inner region of the tetrahedron with four vertices $(0,0,0)$, $(1/2,1/2,0)$, $(1/2,0,1/2)$, $(0,1/2,1/2)$, that is,
\[
	T_{1/2} = \{(x,y,z) \in [0,1/2]^3 \mid -z < x-y < z, z < x+y < 1-z\}.
\]
Here we use the fact that $(\frac{l_1}{2p_1}, \frac{l_2}{p_2}, \frac{l_3}{p_3})$ is never on the boundary $\partial T_{1/2}$ for $\bm{l} \in L'$. By the expression
\begin{align*}
	\mathrm{CS}(l_1, l_2, l_3) = - \frac{1}{4P} \left(P \left(\frac{l_1}{p_1} + \frac{2l_2}{p_2} + \frac{2l_3}{p_3} \right)\right)^2 \pmod{1},
\end{align*}
we have
\begin{align*}
	\sqrt{2/K} \sin (\pi/K) \tau_K &\sim \frac{-4 \sqrt{i}}{\sqrt{P}} \xi_K^{-\frac{1}{4} \Theta_0} \sum_{\substack{\bm{l}=(l_1, l_2, l_3) \in L' \\ (\frac{l_1}{2p_1}, \frac{l_2}{p_2}, \frac{l_3}{p_3}) \in T_{1/2}}} e^{2\pi i \mathrm{CS}(\bm{l}) K} (-1)^{l_1}\\
	&\quad \times \sin \left(\pi l_1 \frac{P}{p_1^2} \right) \sin \left(2\pi l_2 \frac{P}{p_2^2} \right) \sin \left(2\pi l_3 \frac{P}{p_3^2} \right).
\end{align*}
\end{ex}

We remark that the square of the product of sin-terms in this formula is the adjoint Reidemeister torsion in \cite{Freed1992}.


\subsection*{Acknowledgment}
 We would like to thank Hiroyuki Fuji, Kazuhiro Hikami, Kohei Iwaki, Takahiro Kitayama, Nobushige Kurokawa, Akihito Mori, Hitoshi Murakami, Yuya Murakami and Yoshikazu Yamaguchi for valuable discussions. This work is partially supported by
 JSPS KAKENHI Grant Number JP20K14292, JP21K18141, JP17K05243, JP21K03240 and by JST CREST Grant Number JPMJCR14D6.


\bibliographystyle{amsplain}
\bibliography{references} 


\end{document}